\documentclass[12pt]{article}





\usepackage[a4paper,margin=2.5cm, centering]{geometry}

\usepackage{changepage}

\usepackage{titlesec}

\titleformat{\section}[hang]%
{\bfseries\large}{\thesection.}{1ex}{}%

\titleformat{\subsection}[hang]%
{\bfseries}{\thesubsection}{1ex}{}%


\usepackage{amsthm}
\usepackage{fancyhdr}

\theoremstyle{plain}
\newtheorem{theorem}{Theorem}[section]
\newtheorem{lemma}[theorem]{Lemma}
\newtheorem{proposition}[theorem]{Proposition}
\newtheorem{corollary}[theorem]{Corollary}
\newtheorem{definition}[theorem]{Definition} 

\theoremstyle{definition}
\newtheorem{example}[theorem]{Example}
\newtheorem{remark}[theorem]{Remark}


\usepackage{latexsym}
\usepackage{graphicx}
\usepackage{amsmath}
\usepackage{amssymb}
\usepackage{graphicx}
\usepackage[mathcal]{eucal} 
\usepackage{mathrsfs}
\usepackage{tikz}\usetikzlibrary{calc}
\usepackage{bbm}
\usepackage{url}
\usepackage{subdepth}
\usepackage{hyperref}

\usepackage{ifpdf}
\ifpdf
\usepackage[all,pdf,cmtip]{xy} 
\else
\input xy
\xyoption{all}
\xyoption{v2}
\xyoption{cmtip}
\fi


\newcommand{\RR}{\mathbb{R}}

\newcommand{\LL}{\mathbb{L}}

\newcommand{\cC}{\mathcal{C}}
\newcommand{\cD}{\mathcal{D}}
\newcommand{\cF}{\mathcal{F}}
\newcommand{\cM}{\mathcal{M}}
\newcommand{\cX}{\mathcal{X}}
\newcommand{\cY}{\mathcal{Y}}
\newcommand{\cZ}{\mathcal{Z}}

\newcommand{\vC}{\check{C}}
\newcommand{\cech}[1]{\vC(#1)}

\newcommand{\st}{\rightrightarrows} 
\newcommand{\into}{\hookrightarrow} 
\newcommand{\gento}{-\!\!\!\mapsto}


\DeclareMathOperator{\id}{id}
\DeclareMathOperator{\pr}{pr}
\DeclareMathOperator{\ev}{ev}
\DeclareMathOperator{\disc}{disc}
\DeclareMathOperator{\Lines}{Lines}

\DeclareMathOperator{\cHom}{\underline{\mathcal{H}\hspace{-0.1ex}om}}
\DeclareMathOperator{\Map}{Map}
\newcommand{\inhom}[2]{{#2}^{#1}}

\DeclareMathOperator{\Subm}{Subm}
\DeclareMathOperator{\Subd}{Subd}

\DeclareMathOperator{\Cart}{\bf{Cart}}
\DeclareMathOperator{\Gpd}{\bf{Gpd}}
\DeclareMathOperator{\Set}{\bf{Set}}
\DeclareMathOperator{\Stack}{\bf{Stack}}

\DeclareMathOperator{\String}{String}
\DeclareMathOperator{\act}{act}

\newcommand{\bigsubset}[1]
{ \rotatebox{#1}{$\subset$}}


\def\centerarc[#1](#2)(#3:#4:#5)
{ \draw[#1] ($(#2)+({#5*cos(#3)},{#5*sin(#3)})$) arc (#3:#4:#5) } 

\hyphenation{diffeo-logical quasi-topos ana-foncteurs}




\title{Smooth loop stacks of differentiable stacks and gerbes}

\author{David Michael Roberts
and Raymond F. Vozzo}
\date{25 November 2016}

\begin{document}

\maketitle
\mbox{ }

\vskip 25pt
\begin{adjustwidth}{0.5cm}{0.5cm}
{\small
{\bf R\'esum\'e.} Nous d\'efinissons un groupo\"ide de Fr\'echet-Lie $\Map(S^1,X)$ d'ana-foncteurs du cercle vers un groupo\"ide de Lie $X$.  
Ceci fournit une pr\'esentation du Hom-champ $\cHom(S^1,\cX)$, o\`u $\cX$ est le champ diff\'erentiable associ\'e \`a $X$.
Nous appliquons cette construction au groupo\"ide de Lie  sous-jacent au `gerbe fibr\'e' d'une vari\'et\'e diff\'erentiable $M$; le r\'esultat est un gerbe fibr\'e au-dessus de l'espace des lacets $LM$ de $M$.\\
{\bf Abstract.} We define a Fr\'echet--Lie groupoid $\Map(S^1,X)$ of anafunctors from the circle into a Lie groupoid $X$. 
This provides a presentation of the Hom-stack $\cHom(S^1,\cX)$, where $\cX$ is the differentiable stack associated to $X$. 
We apply this construction to the Lie groupoid underlying a bundle gerbe on a manifold $M$; the result is a bundle gerbe on the loop space $LM$ of $M$.\\
{\bf Keywords.} Differentiable stacks, Lie groupoids, Hom-stacks, loop stacks, gerbes, bundle gerbes\\
{\bf Mathematics Subject Classification (2010).} Primary 22A22; Secondary 58B25, 58D15, 14A20, 18F99, 53C08.\\
}
\end{adjustwidth}


\section{Introduction}\label{sec:intro}

The notion of smooth loop space of a manifold is useful in a variety of areas of geometry, while at the same time being just outside the usual sphere of study, namely finite-dimensional manifolds. 
While it is naturally a topological space, it carries a very well-behaved smooth structure as an infinite-dimensional manifold.
While recent progress on generalised smooth spaces means that any mapping space, in particular a loop space, is easily a smooth space, and the general study of smooth spaces is advancing in leaps and bounds (see e.g.~the lengthy book \cite{IZ} on diffeological spaces), the fact that the loop space is a manifold with well-understood charts is extremely useful.

The area of geometry has in recent years expanded to include what is becoming known as `higher geometry', where, loosely speaking, the geometric objects of study have a categorical or higher categorical aspect.
One example of such objects are differentiable, or \emph{Lie}, groupoids, which are known \cite{Pronk_96} to be incarnations of \emph{differentiable stacks}: stacks that look locally like manifolds, but with internal symmetries captured by Lie groupoids.
A rather well-known simple case is that of \emph{orbifolds}.
Other examples that are still stacks on manifolds but which are still akin to Lie groupoids, are groupoids built from infinite-dimensional manifolds, or from smooth spaces. 
Clearly these objects can become locally less well-behaved as one becomes more general; an arbitrary diffeological space, for instance, may have rather terrible topological and homotopical properties.

The construction that this short paper wishes to address is that of the loop stack of a differentiable stack. 
This was introduced in the special case of orbifolds in \cite{Lupercio-Uribe_02}, and then considered in full generality for the purposes of studying string topology in \cite{Behrend_et_al_12}.
All of these are special cases of the loop stack of the underlying \emph{topological stack}, a special case of the topological mapping stack studied in \cite{Noohi_10}.\footnote{The paper \cite{Carchedi_12} considers the more general problem of a cartesian closed bicategory of stacks, whereas \cite{Noohi_10} considers the special case with compactness conditions on the domain.}
In other words, the end result is only a \emph{topological} stack, rather than a differentiable stack.

One (quite reasonable) approach is to consider if we can find a loop stack on manifolds that arises from a diffeological groupoid (i.e.~a diffeological stack).
This is not too difficult, and the parts of our construction that do not require special handling due to the nature of manifolds are performed for diffeological stacks.
The novelty here is that this construction can be lifted so that it becomes a stack arising from what we call a \emph{Fr\'echet--Lie} groupoid: a groupoid in the category of Fr\'echet manifolds. 
This is the optimal result, since the construction applied to a manifold returns (a stack equivalent to) the usual loop space of that manifold, which is an infinite-dimensional Fr\'echet manifold in general. 
This is in contrast to the case of algebraic Hom-stacks, for  instance \cite{Olsson_06}, which are again algebraic stacks.

The benefits of having a smooth version of the loop stack is that one can start to do actual geometry on it, rather than just topological constructions (such as the string topology in \cite{Behrend_et_al_12}). 
Moreover, while one can perform smooth geometric constructions on diffeological spaces, as \emph{spaces}, unlike manifolds there is little control over the local structures.
So, for instance, our construction provides a \emph{smoothly paracompact} groupoid, admitting partitions of unity on object and arrow manifolds.

Another example, which was the original impetus for this article, are the loop stacks of \emph{bundle gerbes}.
Bundle gerbes over manifolds are higher geometric objects analogous to line bundles, and as such can support structures analogous to connections.
One can form the construction given below to the groupoid underlying a bundle gerbe and then the resulting groupoid is in fact still a gerbe, now over a loop space, and this should again carry a connective structure of the appropriate sort.
Of particular interest is the bundle gerbe underlying the \emph{String 2-group}, which will be the subject of future work.

\begin{figure}[!b]
\[
\centerline{
	\xymatrix{
		\cM = \{\text{manifolds}\} \ar@{}[r]|-{\bigsubset{0}} \ar@{}[d]|-{\bigsubset{-90}} 
		& \{\text{Fr\'echet manifolds}\} \ar@{}[r]|-{\bigsubset{0}} \ar@{}[d]|-{\bigsubset{-90}}  
		& \{\text{diffeological spaces}\} = \cD \ar@{}[d]|-{\bigsubset{-90}} 
		\\
		\{\text{Lie groupoids}\} \ar@{}[r]|-{\bigsubset{0}} \ar@{->>}[d]  
		& \{\text{Fr\'echet Lie groupoids}\} \ar@{}[r]|-{\bigsubset{0}} \ar@{->>}[d] 
		& \{\text{diffeological groupoids}\} \ar@{->>}[d] 
		\\
		\{\text{Differentiable stacks}\} \ar@{}[r]|-{\bigsubset{0}} \ar@{}[d]|-{\bigsubset{-90}} 
		& \{\text{Fr\'echet differentiable stacks}\} \ar@{}[d]|-{\bigsubset{-90}} 
		& \{\text{Diffeological stacks}\} \ar@{}[d]|-{\bigsubset{-90}} 
		\\
		\Stack_\cM \ar@{=}[r]& \Stack_\cM \ar[r]^{\simeq}& \Stack_\cD 
	}
	}
\]
\caption{Categories and 2-categories appearing in the paper}\label{fig:categories}
\end{figure}
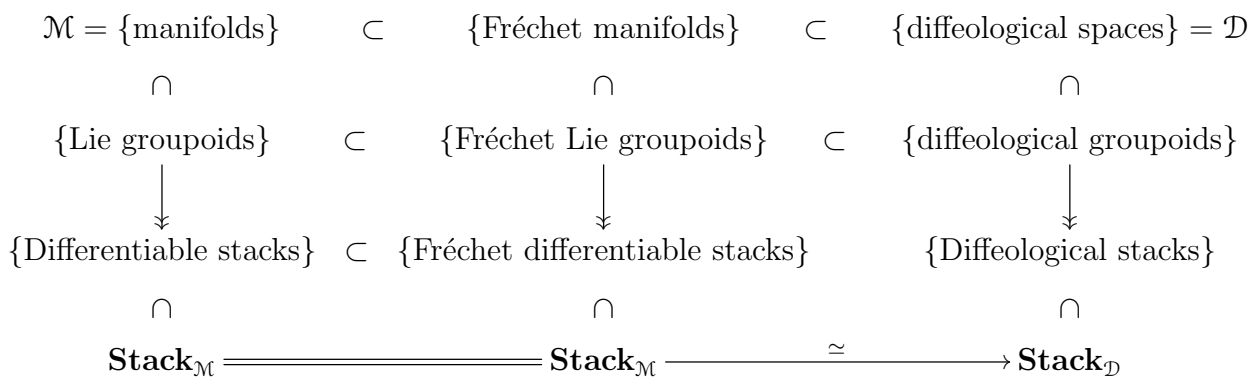

We consider in this paper various categories of smooth objects, groupoids in those categories and corresponding smooth stacks. 
Figure \ref{fig:categories} summarises these, as well as the relations between them. 
The first row consists of categories, the remaining rows consist of 2-categories, and the inclusions denote \emph{full} subcategories and sub-2-categories. 
The vertical arrows of type $\xymatrix{\ar@{->>}[r]&}$ denote surjective-on-objects 2-functors. 
We use $\Stack_X$ to denote the 2-category of stacks of groupoids on the site $X$.


The paper outline is as follows:

\begin{itemize}

	\item Section 2---Gives background on sites, internal groupoids, anafunctors (a type of generalised morphism between internal groupoids) and stacks presented by groupoids internal to the base site.
	\item Section 3---We construct a diffeological groupoid $\Map(S^1,X)$ of anafunctors and transformations.
	\item Section 4---Proves that $\Map(S^1,X)$ is indeed a presentation over the site of diffeological spacs, making $\cHom(S^1,\cX)$ a diffeological stack.
	\item Section 5---We show that the construction of $\Map(S^1,X)$ actually lands in the sub-2-category of Fr\'echet--Lie groupoids, and that this gives a (weak) presentation of $\cHom(S^1,\cX)$. This is our first main result.
	\item Section 6---Gives a treatment of the theory of gerbes on the site of manifolds presented by (Fr\'echet--)Lie groupoids, including establishing stability of various properties under forming the mapping groupoid.
	\item Section 7---We prove our second main result, namely that given a bundle gerbe (a special sort of abelian gerbe), the mapping groupoid is again a bundle gerbe.
\end{itemize}

{\bf Acknowledgements.} This research was supported Australian Research Council's Discovery Projects DP120100106 and DP130102578.
This project was born in `Coffee Spot': thanks to the staff for uninterrupted and secluded working time. 
A big thanks to Andrew Stacey for writing the paper \cite{Stacey_13}, in order to prove theorem \ref{Staceys_thm}, after discussions with the first named author; this theorem was crucial to the success of the current paper. 
Thanks also to Alexander Schmeding for side discussions about possible extensions to the infinite-dimensional setting. 
The authors thank the anonymous referee for their careful reading and helpful suggestions, which helped us find a small error in the original version of Lemma~\ref{lemma:LR_actions_agree}.

\section{Background and preliminaries}\label{sec:background}

\subsection{Sites}\label{subsec:sites}

We will be interested in stacks over sites where the Grothendieck topology, arises from a \emph{coverage} (see e.g.~\cite[Section C.2.1]{Elephant}), rather than the more familiar data of a pretopology.
In this paper we will work only with a coverage and not the Grothendieck topology generated by it.

\begin{definition}\label{def:site}

	Let $\cC$ be a category. A \emph{coverage} $J$ on $\cC$ is a collection $J(x)$, for each object $x$, of families of arrows $\{u_i \to x\mid i \in I\}$ (called \emph{covering families}) with the property that for each covering family $\{u_i \to x\mid i\in I\}\in J(x)$ and $f \colon y\to x$ there is a covering family $\{v_k \to y\mid k\in K\} \in J(y)$ such that for all $k$ there is an $i\in I$ and lift as shown
		\[
			\xymatrix{
				v_k \ar@{-->}[r] \ar[d] & u_i \ar[d]\\
				y \ar[r]_f & x
			}
		\]
	A \emph{site} $(\cC,J)$ is then a category $\cC$ equipped with a coverage $J$, and sites with the same underlying category are \emph{equivalent} if their coverages generate the same sieves.

\end{definition}

It will be the case that the coverages we consider satisfy the saturation condition that composites of coverages are again coverages, but not still not necessarily that pullbacks of covering families are covering families.

If we have a pair of covering families $\mathcal{U} = \{u_i \to x\mid i \in I\}$ and $\mathcal{V} = \{v_j \to x\mid j \in J\}$ then we say $\mathcal{V}$ \emph{refines} $\mathcal{U}$ if for every $j\in J$ there is an $i\in I$ and a lift of $v_j\to x$ through $u_i$. 
We can say that a coverage $J_1$ refines the coverage $J_2$ if every covering family in $J_1$ refines a covering family in $J_2$. If $J_1$ refines $J_2$ and $J_2$ refines $J_1$ then they give rise to equivalent sites.

A coverage is called a \emph{singleton} coverage if all covering families consist of single maps, in which case covering families will be referred to as \emph{covering maps}. 
An example of a singleton coverage is a class of maps containing identity arrows, closed under composition and pullback along arbitrary maps; such a class will be called a \emph{singleton pretopology}

A \emph{superextensive} coverage (on an extensive category, see \cite{Carboni-Lack-Walters_93}) is one that is generated by a singleton coverage and the coverage where covering families are inclusions of summands $\{u_i \to \coprod_{i\in I} u_i\mid i\in I\}$. 
For all intents and purposes, a superextensive coverage $J$ can be reduced to considering just the singleton coverage $\amalg J$ it gives rise to: $\amalg J$-covering maps are of the form $\coprod_i u_i \to x$, for $\{u_i \to x\mid i\in I\}$ a covering family in the original superextensive coverage.
We shall abuse terminology slightly and say that a superextensive coverage $J_1$ and another singleton coverage $J_2$ give rise to equivalent sites when the \emph{singleton coverage associated to $J_1$} is equivalent to $J_2$.
We shall also abuse notation and refer to a covering map in $\amalg J$ as being in $J$ when no confusion shall arise.

A site is called \emph{subcanonical} if all representable presheaves are in fact sheaves.
For a singleton coverage this is implied by all covering maps being regular epimorphisms, and for a subcanonical superextensive coverage $J$, the singleton coverage $\amalg J$ is subcanonical. 
In fact all of the coverages we consider in this paper will be subcanonical.

We will need the following examples over the course of the paper.

\begin{example}\label{eg:Cart_site}

	Consider the category $\Cart$ with objects $\mathbb{R}^n$ for $n=0,1,2,\ldots$ and $\Cart(\mathbb{R}^n,\mathbb{R}^m) = C^\infty(\mathbb{R}^n,\mathbb{R}^m)$. This has a coverage where a covering family $\{\phi_i\colon \mathbb{R}^n \into \mathbb{R}^n\mid i\in I\}$ is an open cover in the usual sense.

\end{example}

For the purposes of the current paper, we consider manifolds to be finite dimensional unless otherwise specified.

\begin{example}\label{eg:Man_site}
	
	The category $\mathcal{M}$ of smooth manifolds has the following coverages:
	\begin{itemize}
		\item the coverage ${O}$ of open covers in the usual sense;
		\item the coverage $C$, where covering families $C(X)$ are covers of $X$ by regular closed compact neighbourhoods, such that the interiors also cover; 
		\item the singleton pretopology $\Subm$ where covering maps are surjective submersions.
		
	\end{itemize}
\end{example}

All these coverages give equivalent sites, the first two because manifolds are locally compact and regular\footnote{In fact there is a coverage on the category of locally compact spaces consisting of compact neighbourhoods, and a coverage on the category of regular spaces consisting of closed neighbourhoods.} and the first and last because surjective submersions have local sections. 
The first two coverages are superextensive, and we will be considering their associated singleton coverages.

Recall that a (smooth) \emph{Fr\'echet} manifold is a smooth manifold locally modelled on Fr\'echet spaces (a good reference is \cite{Hamilton_82}). 
The definition does not assume second-countability, so that the category of Fr\'echet manifolds admits small coproducts.
A submersion between Fr\'echet manifolds is a map for which there are charts on which the map looks locally like a projection out of a direct sum: $V\oplus W\to V$ (it is not enough to ask that this is surjective, or even split surjective, on tangent spaces).

\begin{example}
	
	The category $\mathcal{F}$ of Fr\'echet manifolds has a coverage given by open covers, and also a singleton pretopology given by surjective submersions. 
	The first is superextensive, and these give rise to equivalent sites.

\end{example}

Our last example needs some preliminaries. The following definition is quite different to that which appears in the original article \cite{Souriau_80}, but is in fact equivalent by work of Baez--Hoffnung \cite{Baez-Hoffnung_11}. 
An extensive reference is the book \cite{IZ}.

\begin{definition}

	A \emph{diffeological space} is a sheaf $X$ on $\Cart$ that is a subsheaf of $\mathbb{R}^n \mapsto \Set(\mathbb{R}^n,\underline{X})$, where $\underline{X} = X(\mathbb{R}^0)$ is the \emph{set of points of $X$}. 
	A smooth map of diffeological spaces is just a map between the underlying sheaves.
	We denote the category of diffeological spaces by $\mathcal{D}$.

\end{definition}

We can think of cartesian spaces $\mathbb{R}^n$ as diffeological spaces via the Yoneda embedding, and for $X$ a diffeological space, the elements of $X(\mathbb{R}^n)$ as maps $\mathbb{R}^n \to X$ in $\mathcal{D}$.
The category of diffeological spaces is a Grothendieck quasitopos \cite{Baez-Hoffnung_11}, in particular is complete, cocomplete, extensive and cartesian closed.

A map $X\to Y$ of diffeological spaces is a \emph{subduction} if for every $f\colon \mathbb{R}^n \to Y$ there is a covering family $\phi_i\colon \mathbb{R}^n \into \mathbb{R}^n$ such that each map $f\circ \phi_i\colon \mathbb{R}^n \to Y$ lifts to $X$.
Note that there are fully faithful inclusions $\mathcal{M} \into \mathcal{F} \into \mathcal{D}$. 
Surjective submersions of manifolds and also of Fr\'echet manifolds are subductions.

\begin{example}

	The category of diffeological spaces has a singleton pretopology $\Subd$ given by subductions.

\end{example}

The following facts about subductions will be useful.

\begin{itemize}

	\item Every subduction $A\to B$ is refined by a subduction with domain a coproduct of Euclidean spaces;

	\item Every subduction $A\to M$ with $M$ a manifold is refined by an open cover of $M$.

\end{itemize}

The astute reader will have noticed that almost all of the examples are in fact pretopologies or singleton pretopologies.
The important fact is that we need to use the singleton coverage $\amalg C$ which is \emph{not} a pretopology, but \emph{refines} a singleton pretopology.

\subsection{Internal groupoids}\label{subsec:gpds_anafunctors}

We will be dealing with internal groupoids that satisfy extra conditions, due to the fact that the ambient categories of manifolds are not finitely complete.
To that end, \emph{Lie groupoids} are groupoids internal to $\cM$ where the source and target maps are submersions, and \emph{Fr\'echet--Lie groupoids} are groupoids internal to $\cF$ where again the source and target maps are submersions of Fr\'echet manifolds.
We will also consider \emph{diffeological} groupoids, which are just groupoids internal to $\cD$; their source and target maps are automatically subductions.

Functors between internal groupoids, be they Lie, Fr\'echet--Lie or diffeological groupoids, will be assumed to be smooth.
The same will be true for natural transformations between such functors.
We denote, for a category $\cC$, the 2-category of groupoids internal to $\cC$ by $\Gpd(\cC)$, with the above caveats for $\cC = \cM,\ \cF$.
Since the inclusions $\cM \into \cF \into \cD$ are full, we have full inclusions of 2-categories $\Gpd(\cM) \into \Gpd(\cF) \into \Gpd(\cD)$.

It is a well-known problem that there  are just not enough morphisms between internal groupoids, in particular Lie groupoids and their cousins.
One approach to this problem is through the use of \emph{internal anafunctors}.
These were introduced in Bartels' thesis \cite{Bartels}, inspired by work of Makkai on foundational issues surrounding the Axiom of Choice in category theory.
We do not need the full theory of internal anafunctors, the basic definitions are enough for the present paper, for the special case where we only consider internal groupoids.
We have also generalised the notion ever so slightly, by using singleton \emph{coverages}; the fragment of the theory we need here does not lose out by considering this more general setting.

\begin{definition}[\cite{Bartels}]
	
	Let $J$ be a singleton coverage on $\cC$ and let $Y$ and $X$ be groupoids in $\cC$.
	An \emph{anafunctor} $Y\gento X$ is a span of internal functors
	\[
		Y \xleftarrow{j} Y' \xrightarrow{f} X
	\]
	where the object component $j_0\colon Y'_0 \to Y_0$ of $j$ is a $J$-cover, and the following square is a pullback
	\[
		\xymatrix{
			Y'_1 \ar[r]^{j_1} \ar[d] & Y_1 \ar[d]\\
			Y'_0\times Y'_0 \ar[r]_{j_0\times j_0} & Y_0 \times Y_0
		}
	\]

\end{definition}

Of primary interest to us is the case when the groupoid $Y$ has no nontrivial arrows, that is, it is just an object of $\cC$, say $M$.
In that case, any functor $j\colon Y' \to M$ satisfying the conditions is determined by the map on objects and the groupoid $Y$ is what is known as a \emph{\v Cech groupoid} of the covering map $j_0$ (or by abuse of notation, of its domain).
If we let $U= Y'_0$, then $Y'_1 = U\times_M U$, and we denote $Y'$ by $\check C(U)$.
Thus any anafunctor from $M$ to an internal groupoid $X$ is of the form $M \xleftarrow{j} \check C(U) \xrightarrow{f} X$.

Assume for the moment that $J$ is a singleton pretopology, so that we have pullbacks of covering maps. 
Given a pair of anafunctors $M \leftarrow \check C(U_1) \xrightarrow{f} X$ and $M \leftarrow \check C(U_2) \xrightarrow{g} X$, we want to define what it means to have a transformation between them.
Let $U_{12} = U_1 \times_M U_2$. Then a transformation is a diagram
\[
	\xymatrix{
		& \check C(U_1) \ar[dl] \ar[dr]^f_(.6){\ }="s" \\
		M & \check C(U_{12}) \ar[u] \ar[d] & X\; ,\\
		& \check C(U_2) \ar[ul] \ar[ur]_g^(.6){\ }="t"
		\ar@{=>}"s";"t"^\alpha
	}
\]
where the two functors $\check C(U_{12}) \to \check C(U_i)$ are induced by the projections $U_{12} \to U_i$.
The picture one should keep in mind here is a coboundary between $X$-valued \v Cech cocycles that lives over a common refinement.

For a singleton \emph{coverage}, such as the coverage $\amalg C$ of compact neighbourhoods on manifolds, we can define a transformation to be a diagram as above, where instead of considering the pullback, which does not necessarily exist (or if it does, may not be a covering map), one considers a refinement $U_{12}$, equipped with maps to $U_1$ and $U_2$. 
One of the lessons that can be gleaned from \cite{Roberts_16} is that when working with anafunctors nothing is lost by considering a coverage that is cofinal in a pretopology, rather than the pretopology itself (as in \cite{Bartels}).

Using the notion of anafunctor with respect to a pretopology, internal groupoids, anafunctors and transformations form a bicategory \cite{Bartels}. 
We will not use this bicategory structure directly, but it is relied on implicitly to take advantage of Theorem~\ref{thm:anafunctors_are_stack_maps} below.

\subsection{Stacks}\label{subsec:stacks}

We are considering stacks on the category $\cM$ of manifolds using the coverage $O$ of open covers. 
A standard reference is \cite{Behrend-Xu_11}, and we point the reader to the detailed discussion of stacks in section 2.2 therein. 
We give the definition we need and then mention without proof some standard facts.

\begin{definition}

	Let $\cX\colon \cM^{\text{op}} \to \Gpd$ be a weak 2-functor. We say $\cX$ is a \emph{stack} if the following conditions are satisfied for every covering family $\{\phi_i\colon U_i\to M\mid i\in I\}$:
	\begin{enumerate}
		
		\item For any pair of objects $x,y$ of $\cX(M)$ and any family of isomorphisms $\sigma_i\colon x|_{U_i} \to  y|_{U_i}$ in $\cX(U_i)$, $i\in I$, there is a \emph{unique} isomorphism $\sigma\colon x\to y$ in $\cX(M)$ such that $\sigma|_{U_i} = \sigma_i$.

		\item For every family of objects $x_i \in \cX(U_i)$, $i\in I$, and collection of isomorphisms $\sigma_{ij}\colon x_i|_{U_{ij}} \to x_j|_{U_{ij}}$ in $\cX(U_{ij})$, $i,j\in I$ satisfying $\sigma_{jk}\circ\sigma_{ik} = \sigma_{ik}$ in $\cX(U_{ijk})$ (leaving the restrictions implicit), then there is an object $x$ of $\cX(M)$ and isomorphisms $\rho_i\colon x|_{U_i} \to x_i$ for all $i\in I$ such that $\sigma_{ij}\circ \rho_i = \rho_j$ (in $\cX(U_{ij})$) for all $i,j\in I$ (where as usual, we write $U_{ij} = U_i\cap U_j$ and $U_{ijk}=U_i\cap U_j\cap U_k$).
	\end{enumerate}

	If only the first point is satisfied, then we say $\cX$ is a \emph{prestack}.

\end{definition}

A morphism of stacks is given by a transformation of weak 2-functors, and there is a 2-category $\Stack_\cM$ of stacks on $(\cM,O)$.
The relevant points we need are as follows:

\begin{itemize}

	\item Any manifold $M$ gives rise to a stack, also denoted by $M$ (as $O$ is subcanonical). Also, any diffeological space is a stack. 
	The Yoneda embedding ensures that any map of stacks between manifolds or diffeological spaces is just a smooth map in the usual sense.
	A stack equivalent to a manifold is called \emph{representable}.

	\item Any Lie groupoid gives rise to a \emph{prestack}, by sending the groupoid $X$ to the presheaf of groupoids $\cM(-,X)\colon \cM^\text{op} \to \Gpd$, and this prestack can be `stackified'. More generally, any Fr\'echet--Lie or diffeological groupoid gives rise to a prestack and hence a stack.

	\item The 2-category of stacks $\Stack_\cD$ on $(\cD,\Subd)$ is equivalent to $\Stack_\cM$. 
	This follows from a stack version of the ``lemme de comparaison'' \cite[Espos\'e III, Th\'eor\`eme 4.1]{SGA4.1}; see discussion at \cite{Carchedi_MO}.

\end{itemize}

The correct notion of `pullback' for stacks is a \emph{comma object}.\footnote{This is sometimes called a weak pullback, or even just a pullback, in the stack literature. However the definition usually given is clearly that of a comma object.} 
For a cospan $G \xrightarrow{f} H \xleftarrow{g} K$ of groupoids, the comma object $G\downarrow_H K$ (or sometimes $f\downarrow g$) can be computed as the \emph{strict} limit $G\times_H H^\mathbbm{2} \times_H K$ where $H^\mathbbm{2}$ is the arrow groupoid of $H$. 
The comma object of a cospan of stacks is calculated pointwise, that is, $\left(\cX \downarrow_\cZ \cY\right)(M) = \cX(M) \downarrow_{\cZ(M)} \cY(M)$. The comma object fits into a 2-commuting square called a \emph{comma square},
\[
	\xymatrix{
		\cX \downarrow_\cZ \cY\ar[r] \ar[d] & \cY \ar[d]_{\ }="s"\\
		\cX \ar[r]^{\ }="t" & \cZ
		\ar@{=>}"s";"t"
	}
\]
which is universal among such 2-commuting squares.

A stack is said to be \emph{presentable} if it is the stackification of an internal groupoid. In this case, there is extra structure that the stack admits, from which we can recover the groupoid up to weak equivalence \cite{Pronk_96,Behrend-Xu_11}.

First, we say a map of stacks $\cY \to \cX$ is \emph{representable} (resp.~representable by diffeological spaces) if for every manifold $M$ and map $M \to \cX$, the comma object $M\downarrow_\cX \cY$ is representable by a manifold (resp.~a diffeological space).
We can talk about properties of representable maps arising from properties of maps in $\cM$ or $\cD$; if $P$ is a property of maps of manifolds (or diffeological spaces) that is stable under pullback and local on the target in a given coverage $J$, then we say a representable map of stacks $\cY \to \cX$ has property $P$ if for every $M \to \cX$ the projection $M\downarrow_\cX \cY \to M$ has property $P$.

\begin{definition}
	
	A stack $\cX$ on $(\cM,O)$ is \emph{presentable} (resp.~presentable by a diffeological groupoid) if there is a manifold (resp.~diffeological space) $X_0$ and a representable epimorphism $p\colon X_0 \to \cX$ that is a submersion (resp.~subduction).

\end{definition}

It follows from the definition that the comma object $X_1 := X_0 \downarrow_\cX X_0$ is representable (by a manifold or diffeological space), the two projection maps $X_1 \to X_0$ are submersions (or subductions) and $X_1 \st X_0$ is an internal groupoid.
This internal groupoid is said to \emph{present} the stack $\cX$.
Then $\cX$ is the stackification of the prestack arising from this internal groupoid.
Note that this definition also works if we ask for presentability by a Fr\'echet--Lie groupoid: one asks for a representable submersion from a Fr\'echet manifold.

The usual name for a stack presentable by a Lie groupoid is \emph{differentiable stack}, and we will call stacks presentable by diffeological groupoids, \emph{diffeological stacks}.
Stacks presented by a Fr\'echet--Lie groupoid shall be called \emph{Fr\'echet differentiable stacks}.

The main result we need here is the following, and follows from the combination of the general theory of \cite{Pronk_96} and \cite[Theorem 7.2]{Roberts_12} in the case of Lie groupoids, and uses an adaptation of Pronk's argument for the case of diffeological groupoids.

\begin{theorem}\label{thm:anafunctors_are_stack_maps}

	The 2-category of differentiable stacks (resp.\ diffeological stacks) is equivalent to the bicategory of Lie groupoids (resp.\ diffeological groupoids), anafunctors and transformations.

\end{theorem}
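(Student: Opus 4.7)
The plan is to realize both sides as bicategorical localizations of $\Gpd(\cM)$ (resp.\ $\Gpd(\cD)$) at the same class of maps, namely the \emph{weak equivalences}: those internal functors $\phi\colon Y \to X$ whose object component $\phi_0$ becomes essentially surjective relative to the coverage (i.e.\ the map $Y_0 \times_{X_0} X_1 \to X_0$ given by source-projection out of the arrows-ending-at-$\phi(y)$ is a covering map) and which are fully faithful in the internal sense (the evident square involving $Y_1$ and $Y_0\times Y_0 \times_{X_0\times X_0} X_1$ is a pullback). With the right-hand side already packaged bicategorically via anafunctors, and the left-hand side being the full sub-2-category of $\Stack_\cM$ (resp.\ $\Stack_\cD$) on the presentable stacks, the equivalence follows by exhibiting both as the same localization.

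First I would recall Pronk's framework \cite{Pronk_96}: given a bicategory $B$ and a class $W$ of 1-morphisms satisfying the bicategorical right-calculus-of-fractions axioms (identities in $W$, $W$ is closed under composition and horizontal/vertical pasting with 2-isomorphisms, and a `two-out-of-three'-style condition), one can form the bicategory of fractions $B[W^{-1}]$ with a universal property for 2-functors inverting $W$. I would apply this to $B=\Gpd(\cC)$ (for $\cC=\cM$ and $\cC=\cD$) with $W$ the weak equivalences. The verification of Pronk's axioms reduces to standard facts: weak equivalences compose, are stable under the relevant pullback-like constructions (using that submersions or subductions are stable under pullback and local on the target), and any cospan $Y \to X \leftarrow Z$ with the right leg in $W$ admits a completion to a square with the opposite legs again in $W$. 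For the diffeological case one exploits the quasitopos structure of $\cD$, which supplies the needed pullbacks of subductions and fibre products of diffeological groupoids.

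Next I would identify anafunctors with the resulting bicategory of fractions. A span $Y \xleftarrow{j} Y' \xrightarrow{f} X$ in the definition of anafunctor is precisely a span whose left leg is a weak equivalence (by the pullback square on arrows together with the covering condition on objects), and transformations between anafunctors match Pronk's 2-cells after passing to a common refinement of the covers; here the fact that we use a singleton coverage, and that $\amalg C$ (or $\Subm$, or $\Subd$) is cofinal in the pretopology of covers, ensures we do not need pullbacks to exist in the base to form compositions. This part is essentially \cite[Theorem 7.2]{Roberts_12} in the Lie case and adapts to $\cD$ by replacing submersions with subductions throughout and checking that the coverage $\Subd$ is subcanonical and closed enough under the required constructions.

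The remaining ingredient is the stack side. The stackification 2-functor $\Gpd(\cC) \to \Stack_\cC$ sends weak equivalences to equivalences of stacks (since it identifies isomorphic descent data along the covering coverage), so by the universal property of Pronk's localization it factors through $\Gpd(\cC)[W^{-1}]$. That the induced 2-functor lands in presentable stacks is tautological, and essential surjectivity on presentable stacks is the content of the definition of presentability. For full faithfulness on hom-categories one unwinds: a morphism $\cY \to \cX$ between stacks presented by $Y,X$ corresponds, after pulling back along the representable atlas $X_0 \to \cX$ and choosing a covering lift, to an anafunctor $Y \gento X$; compatibility of two such choices is exactly a transformation of anafunctors. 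I expect the main obstacle to be the diffeological adaptation: one must check that subductions are well-behaved enough to run Pronk's argument (in particular that the `third leg' in the two-out-of-three axiom can always be arranged to be a subduction, using the two useful facts about subductions listed at the end of \S2.1) and that the comparison $\Stack_\cD \simeq \Stack_\cM$ respects presentability and commutes with the formation of $\amalg C$-covers via the lemme de comparaison. Once these compatibilities are in place, Pronk's uniqueness of localizations yields the required bicategorical equivalence in both the differentiable and diffeological cases.
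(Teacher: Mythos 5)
Your proposal follows essentially the same route as the paper, which proves this result by combining Pronk's localization theory from \cite{Pronk_96} (identifying presentable stacks with the bicategory of fractions of internal groupoids at the weak equivalences) with \cite[Theorem 7.2]{Roberts_12} (identifying that bicategory of fractions with the anafunctor bicategory), together with an adaptation of Pronk's argument to the diffeological setting. The decomposition into these ingredients, and the points you flag as needing care (cofinality of the singleton coverage, stability of subductions), match the paper's intended argument.
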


What this means in practice is that we can pass between maps between presentable stacks and anafunctors between the presenting groupoids, and we shall use this below.

If we have an epimorphism $p\colon X_0 \to \cX$ from a representable stack $X_0$ such that merely the comma object $X_0 \downarrow_\cX X_0$ is representable and the projections are surjective submersions, then we call $p$ a \emph{weak presentation}.
For certain sites a weak presentation gives a strong presentation: this is true for instance for presentations by diffeological spaces.
This relies on the following lemma adapted from \cite[Lemma~2.2]{Behrend-Xu_11}, which works in the framework of stacks on the site of \emph{not-necessarily-Hausdorff} (finite-dimensional) manifolds.

\begin{lemma}\label{lemma:representable_maps_local_on_target}

	Let $f\colon \cY \to \cX$ be a morphism in $\Stack_\cM$. 
	If $M$ is a diffeological space, $M\to \cX$ an epimorphism of stacks, and the comma object $M\downarrow_{\cX} \cY$ is a diffeological space, then $f$ is representable as a map of stacks considered in the equivalent 2-category $\Stack_\cD$.

\end{lemma}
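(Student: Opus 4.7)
The plan is to work in $\Stack_\cD$ and show that for every diffeological space $N$ equipped with a map $g\colon N\to\cX$, the comma object $\cQ := N\downarrow_\cX \cY$ is representable by a diffeological space. The strategy has three ingredients: lift $g$ locally through the given epimorphism $M\to\cX$, read off local representability of $\cQ$ from the hypothesis on $P := M\downarrow_\cX \cY$, and descend along the subduction produced in the first step.

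First, because $M\to\cX$ is an epimorphism in $\Stack_\cD$ (with the coverage $\Subd$), there is a subduction $\pi\colon U\to N$ together with a lift $h\colon U\to M$ and a 2-cell $m\circ h \simeq g\circ\pi$, where $m\colon M\to\cX$ denotes the given map; after refinement $U$ may be taken to be a coproduct of open subsets of Euclidean spaces. The pasting lemma for comma squares, applied to the two factorizations $U\to M\to \cX$ and $U\to N\to\cX$ of the map $U\to\cX$, then yields
\[
U \times_N \cQ \;\simeq\; U\downarrow_\cX \cY \;\simeq\; U \times_M P,
\]
where the commas collapse to strict pullbacks because $M$ and $N$, being diffeological spaces, are discrete stacks. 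All three factors of the right-hand pullback lie in $\cD$, so $U\times_N \cQ$ is a diffeological space.

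Finally I would descend along the subduction $\pi$. The base-change $U\times_N \cQ \to \cQ$ is itself a subduction (pullback of $\pi$), and the double fiber product
\[
(U\times_N \cQ)\times_\cQ (U\times_N \cQ) \;=\; U\times_N (U\times_N \cQ)
\]
is once more a pullback of diffeological spaces over $N$, hence in $\cD$. This exhibits $\cQ$ as the quotient, in sheaves on $(\cD,\Subd)$, of an equivalence relation internal to $\cD$. Since $\cD$ is a Grothendieck quasitopos and subductions form an effective descent coverage for diffeological spaces, this quotient is representable, giving an equivalence in $\Stack_\cD$ between $\cQ$ and an object of $\cD$.

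The main obstacle is the last step: one must know that the sheaf-theoretic coequalizer of an equivalence relation whose terms lie in $\cD$ again lies in $\cD$, rather than merely in sheaves on $\Cart$. This is exactly the quasitopos content of $\cD$ recalled in Section~2 (together with the refinement property that every subduction is refined by one with Euclidean domain), and is where the specific geometry of the diffeological site is essential, rather than any purely formal 2-categorical manipulation.
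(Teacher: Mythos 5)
Your architecture---lift $g\colon N\to\cX$ locally through $M\to\cX$ along a subduction $U\to N$, identify $U\downarrow_\cX\cY$ with the pullback $U\times_M(M\downarrow_\cX\cY)$, and then descend---is exactly the shape of the argument the paper is adapting from \cite[Lemma~2.2]{Behrend-Xu_11}, and your first two steps are correct. The problem is the final descent step, which you rightly flag as the main obstacle but then dispose of with two assertions that do not hold at the level of generality at which you invoke them. First, you assert that the descent groupoid $(U\times_N Q)\times_Q(U\times_N Q)\st U\times_N Q$ (writing $Q=N\downarrow_{\cX}\cY$) is an \emph{equivalence relation}. That is equivalent to $Q$ being $0$-truncated, and it is not a formal consequence of having an epimorphism from a diffeological space whose double overlap is a diffeological space: the atlas $\ast\to BG$ for a Lie group $G$ satisfies all of those conditions, yet $BG$ is not a sheaf. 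What your formal manipulations actually give is only that $Q$ is presented by a diffeological \emph{groupoid}. To see that the groupoid is an equivalence relation you must kill the automorphisms of objects of $Q(T)$: after restricting along a cover of $T$ over which $T\to N\to\cX$ lifts through $M$, such an automorphism becomes an automorphism of an object of the \emph{sheaf} $M\downarrow_{\cX}\cY$, hence is locally the identity, hence is the identity because $Q$ is a prestack. This uses the hypothesis a second time and is absent from your write-up.

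Second, even granting the equivalence relation, the appeal to ``$\cD$ is a Grothendieck quasitopos and subductions form an effective descent coverage'' points in the wrong direction: the failure of equivalence relations to be effective is precisely what separates a quasitopos from a topos, and the quotient of an equivalence relation between diffeological spaces, computed in sheaves on $(\cD,\Subd)$, need not be a \emph{concrete} sheaf (the kernel pair of the quotient map taken in $\cD$ can be strictly larger than the relation you started with, when the relation carries a finer-than-subspace diffeology). What is actually needed is that the sheaf $Q$ is concrete, i.e.\ that two objects of $Q(T)$ agreeing at every point of $T$ are isomorphic over $T$; this again follows by lifting locally into $M\downarrow_{\cX}\cY$, using that \emph{it} is concrete, and gluing the resulting local isomorphisms via the prestack property. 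So the skeleton of your proof is the right one (and agrees with the citation the paper relies on in lieu of a written proof), but the one step you defer to general quasitopos facts is exactly where the hypothesis on $M\downarrow_{\cX}\cY$ must be used twice more; as written the argument does not close.
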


The analogous result is not true for stacks on the category $\cM$, but it \emph{is} true (following \cite{Behrend-Xu_11}) if we allow ourselves to use possibly non-Hausdorff manifolds.
In practice, one often finds that the stack is weakly presented by a Lie groupoid, which is made up of (Hausdorff) manifolds, which then can be used without reference to non-Hausdorff manifolds.
The same can be said for weak presentations by Fr\'echet--Lie groupoids, an example of which will arise in our main construction.

While it is not always the case that the 2-category of internal groupoids has internal homs, the 2-category of stacks \emph{does} have internal homs, namely for a pair of stacks $\cX,\cY$, there is a stack $\cHom(\cY,\cX)$ and an evaluation map $\cY \times \cHom(\cY,\cX) \to \cX$ with the necessary properties.

\begin{definition}
	
	The \emph{Hom-stack} $\cHom(\cY,\cX)$ is defined by taking the value on the object $M$ to be the groupoid $\Stack_\cM(\cY\times M,\cX)$.

\end{definition}

Thus we have a Hom-stack for any pair of stacks on $\mathcal{M}$.
The case we are interested in is where we have a stack $\cX$ associated to an internal groupoid $X$ in $\cM$ or $\cD$, and the Hom-stack $\cHom(S^1,\cX)$.

\section{Construction of the diffeological loop groupoid}\label{section:diffeological_groupoid}

We will now describe the construction of the loop groupoid of a diffeological groupoid $X$. This will naturally be a groupoid also internal to $\cD$, and we shall show in the next section that it in fact presents the Hom-stack $\cHom(S^1, \cX)$, for $\cX$ the stack associated to $X$.

The objects of the diffeological mapping groupoid are anafunctors $S^1 \gento X$, using the compact neighbourhood coverage $C$ of Example \ref{eg:Man_site}.

As the category $\cD$ of diffeological spaces is cartesian closed and finitely complete, results of Bastiani--Ehresmann \cite{Bastiani-Ehresmann_72} imply that the category $\Gpd(\cD)$ of diffeological groupoids is also cartesian closed and finitely complete. 
Therefore the set $\Gpd(\cD)(\check C(V), X)_0$ of objects of the internal hom---a groupoid---is in fact a diffeological space.
We shall, for the sake of saving space, write $\inhom{\vC(V)}{X} := \Gpd(\cD)(\check C(V),X)_0$.
The category $\cD$ is also cocomplete (in fact extensive) and so we define the object space $\Map(S^1,X)_0$ to be the diffeological space
\[
	\coprod_{V\in C(S^1)} \inhom{\vC(V)}{X}.
\]

\begin{remark}
This mirrors the construction of the topological loop group\-oid as in \cite{Lupercio-Uribe_02, Behrend_et_al_12}, even though for the purposes of diffeological group\-oids it is not necessary to focus only on \emph{compact} neighbourhoods; the diffeological groupoid of functors would exist using ordinary open covers. 
This would even give an equivalent mapping diffeological groupoid in the end.
However, for the Lie groupoids in section \ref{sec:Frechet} we \emph{do} need to use compact covers to get the appropriate Fr\'echet topology on mapping spaces.

\end{remark}

The picture we keep in mind for the elements of the object space is  a sequence like:

\medskip
\begin{center}
\begin{tikzpicture}
\pgfmathsetmacro{\top}{2}
\pgfmathsetmacro{\bottom}{0}
\pgfmathsetmacro{\smallgap}{0.15}
\pgfmathsetmacro{\arrowlength}{\top-\bottom - 2*\smallgap}

	\draw[ultra thick] (0,\top) -- (2.5,\top);
		\draw[->] (1.5,\top-\smallgap) -- +(0,-\arrowlength);
		\draw[->] (2,\top-\smallgap) -- +(0,-\arrowlength);
		\draw[->] (2.5,\top-\smallgap) -- +(0,-\arrowlength);
	\draw[ultra thick] (1.5,\bottom) -- (5.5,\bottom);
		\draw[<-] (4,\top-\smallgap) -- +(0,-\arrowlength);
		\draw[<-] (4.5,\top-\smallgap) -- +(0,-\arrowlength);
		\draw[<-] (5,\top-\smallgap) -- +(0,-\arrowlength);
		\draw[<-] (5.5,\top-\smallgap) -- +(0,-\arrowlength);
	\draw[ultra thick] (4,\top) -- (8,\top);
		\draw[->] (7,\top-\smallgap) -- +(0,-\arrowlength);
		\draw[->] (7.5,\top-\smallgap) -- +(0,-\arrowlength);
		\draw[->] (8,\top-\smallgap) -- +(0,-\arrowlength);
	\draw[ultra thick] (7,\bottom) -- (10,\bottom);
	
\end{tikzpicture}
\end{center}
\medskip
where the horizontal lines are paths in $X_0$ and the vertical arrows, varying smoothly,
are given by a path in $X_1$.

Next we move on to the arrow space of $\Map(S^1, X)$.
Recall that a \emph{transformation} $t\colon f\to g$ of anafunctors $f,g\colon S^1 \gento X$ is a diagram
\[
	\xymatrix{
		\check C(V_{12}) \ar[r] \ar[d] & \check C(V_1) \ar[d]^f_(.35){\ }="s"\\
		\check C(V_2) \ar[r]_g^{\ }="t" & X
		\ar@{=>}"s";"t"
	}
\]
where $V_{12}$ is the chosen refinement of $V_1 \times_{S^1} V_2$ as discussed in section \ref{subsec:gpds_anafunctors}. 
Note that $t$ is necessarily a natural isomorphism as $X$ is a groupoid.

For arbitrary $f$ and $g$ with domains $\check C(V_1)$ and $\check C(V_2)$, respectively, the diffeological space of all transformations is
\[
	\inhom{\vC(V_1)}{X} \times_{\inhom{\vC(V_{12})}{X}} \inhom{\vC(V_{12})}{(X^\mathbbm{2})} \times_{\inhom{\vC(V_{12})}{X}} \inhom{\vC(V_2)}{X}
\]
where the two maps 
\[
	\inhom{\vC(V_i)}{X} \to \inhom{\vC(V_{12})}{X}
\] 
are given by precomposition with the canonical functors $\check C(V_{12}) \to \check C(V_i)$. 
Here the groupoid $X^\mathbbm{2}$ is the arrow groupoid of $X$ and we are pulling back along the maps 
\[
	\inhom{\vC(V_{12})}{(X^\mathbbm{2})} \to \inhom{\vC(V_{12})}{X}
\]
which are given by postcomposition with the  functors $S,T\colon X^\mathbbm{2} \to X$ (on objects these are the usual source and target maps).

The space of arrows $\Map(S^1, X)_1$ is then 
\[
	\coprod_{V_1, V_2 \in C(S^1)}  \inhom{\vC(V_1)}{X} \times_{\inhom{\vC(V_{12})}{X}} \inhom{\vC(V_{12})}{(X^\mathbbm{2})} \times_{\inhom{\vC(V_{12})}{X}} \inhom{\vC(V_2)}{X}
\]
The source and target maps are projections on to the first and last factors. These are automatically smooth maps, and are both split by the unit map and hence are quotient maps; quotient maps in $\cD$ are subductions and hence the source and target maps are subductions.

Composition of transformations of anafunctors \cite[Proposition~12]{Bartels} (or \cite[Section 5]{Roberts_12} for a description closer to what is given here) is a little involved, but is essentially induced by the composition in $X^\mathbbm{2}$, which is smooth. This implies that composition in $\Map(S^1,X)$ is smooth, and hence that $\Map(S^1, X)$ is a diffeological groupoid.

\section{Presentation by a diffeological groupoid}
\label{sec:diffeological_presentation}

For $X$ a diffeological groupoid, to give a presentation over $\cD$ of the Hom-stack $\cHom(S^1,X)$ we will need to define a map from some diffeological space $A$, considered as a stack, to $\cHom(S^1,X)$.
Such a map is determined by a map of stacks $A \times S^1 \to \cX$.
Since these are all stacks arising from diffeological groupoids, this a map can be specified by constructing an anafunctor $A \times S^1 \gento X$ in the category of diffeological spaces as per Theorem~\ref{thm:anafunctors_are_stack_maps}.

Consider then the covers $V\to S^1$ used in the construction of the diffeological mapping groupoid, which are subductions since they admit local sections over open sets.
The product of subductions is again a subduction, so we can, for each $V \in C(S^1)$ define the anafunctor 
\[
	S^1 \times \inhom{\vC(V)}{X} \leftarrow \check C(V) \times \inhom{\vC(V)}{X} \xrightarrow{\ev} X
\]
where the right-pointing arrow is just the evaluation map for diffeological groupoids.
This gives us, via the preceeding argument, a map 
\[
	\inhom{\vC(V)}{X} \to \cHom(S^1,\cX)
\]
of stacks, and hence a map $q\colon \Map(S^1,X)_0 \to \cHom(S^1,\cX)$.

\begin{proposition}\label{prop:presentation_map_an_epi}

	For $X$ a diffeological groupoid, the map $q$ is an epimorphism of stacks.

\end{proposition}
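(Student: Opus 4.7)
To show $q$ is an epimorphism of stacks on $(\cD,\Subd)$, I need to show that any map $T \to \cHom(S^1,\cX)$ from a diffeological space $T$ can, after passing to a subduction $T' \to T$, be lifted through $q$ to a map $T' \to \Map(S^1,X)_0$. By the definition of $\cHom$ and Theorem~\ref{thm:anafunctors_are_stack_maps}, such a map corresponds to an anafunctor $T\times S^1 \gento X$ in $\cD$, that is, a subduction $p\colon U \to T\times S^1$ together with a functor $\check C(U) \to X$. The task is to massage $p$ into a product form $V\times T' \to S^1\times T'$ with $V\in C(S^1)$ after pulling back along a suitable subduction $T' \to T$.

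The plan is first to reduce to the case where $T$ is a Euclidean space. By the two useful facts about subductions recalled in Section~2.1, any subduction from $T$ is refined by one whose domain is a coproduct of Euclidean spaces. Thus it suffices to produce local lifts after precomposing $\phi \colon \mathbb{R}^n \to T$ for arbitrary plots $\phi$. Pulling back $p$ along $\phi\times \id_{S^1}$ gives a subduction $U_\phi \to \mathbb{R}^n \times S^1$; since $\mathbb{R}^n\times S^1$ is a manifold, this subduction is refined by an open cover $\{W_\alpha\times O_\alpha\}$ of basic product open sets (possibly after a further refinement).

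The second step is where compactness of $S^1$ comes in, and is the main technical content. Fix $x_0\in\mathbb{R}^n$. By compactness of $\{x_0\}\times S^1$, finitely many basic open sets $W_\alpha \times O_\alpha$ cover it, and I can choose a common small neighborhood $W\subset \mathbb{R}^n$ of $x_0$ such that $W\times O_\alpha$ (for these finitely many $\alpha$) still covers $W\times S^1$. Shrinking the $O_\alpha$ to regular closed compact neighborhoods $V_i\subset S^1$ whose interiors still cover $S^1$, I obtain a compact neighborhood cover $V=\{V_i\}\in C(S^1)$ such that $W\times V_i$ maps into $U_\phi$ for each $i$. Restricting the given functor $\check C(U_\phi)\to X$ along the factorisation $\check C(W\times V) \to \check C(U_\phi)$ and currying with respect to $W$ (using cartesian closedness of $\Gpd(\cD)$) gives a smooth map $W \to \inhom{\vC(V)}{X}$, that is, a smooth map into the $V$-summand of $\Map(S^1,X)_0$. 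This map is a lift along $q$ of the restriction of the original $T$-map to $W$ (via $\phi$).

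Finally, these local lifts assemble: as $\phi$ and $x_0$ vary, the resulting maps $W\to \Map(S^1,X)_0$ have domains which form an open cover, hence a subduction, over $\mathbb{R}^n$, and these aggregate over $\phi$ to a subduction $T' \to T$ with a global lift $T'\to \Map(S^1,X)_0$. The main obstacle is the compactness-plus-tube argument in the middle step: one must extract, from an arbitrary subduction over $\mathbb{R}^n\times S^1$, a product form that separates the $S^1$-direction into a specific compact neighborhood cover, and it is precisely here that the choice of the coverage $C$ (rather than arbitrary open covers) is essential for later making contact with Fr\'echet structures. The rest is formal manipulation of anafunctors and subductions.
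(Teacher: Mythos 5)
Your proof is correct and follows essentially the same route as the paper's: reduce to plots $\mathbb{R}^n \to \cHom(S^1,\cX)$, replace the representing anafunctor by one coming from an open cover of $\mathbb{R}^n\times S^1$, and use compactness of $S^1$ (a tube-lemma argument) to extract product-form covers $W\times V$ with $V\in C(S^1)$, yielding local sections of $q$. The only difference is that you spell out the compactness/tube step explicitly, whereas the paper delegates it to the argument in Noohi's Theorem~4.2.
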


\begin{proof}
	
	It is enough to show that for any $f\colon \mathbb{R}^n \to \cHom(S^1,\cX)$ there is an open cover of $\mathbb{R}^n$ and local sections of $q$ over it.
	The map $f$ is determined by a map $\mathbb{R}^n\times S^1 \to \cX$ of stacks, and hence an anafunctor ${F}\colon{\mathbb{R}^n\times S^1 \gento} X$ of diffeological groupoids. 
	Any subduction with codomain a manifold is refined by an open cover of the manifold, so we can replace the anafunctor by an isomorphic one of the form $\mathbb{R}^n\times S^1 \leftarrow \check C(U) \to X$, where $U = \coprod_i U_i \to \mathbb{R}^n\times S^1$ is an open cover.
	We can further repeat the argument from \cite[Proof of Theorem~4.2]{Noohi_10} 
	to construct an open cover $\coprod_j W_j \to \mathbb{R}^n$ 
	and for each $j$ an open cover $V^o_j\to S^1$ with closure 
	$V_j\to S^1$ an element of $C(S^1)$.
	Then the restrictions $W_j\times \check C(V_j) \to X$ of $F$ give maps of diffeological spaces $W_j \to \inhom{\vC(V_j)}{X}$, i.e.\ local sections of $q$ over the open cover $\{W_j\}$.
\end{proof}

To show that the groupoid $\Map(S^1,X)$ presents the Hom-stack, we need to show that the comma object of the map $q$ with itself is the arrow space $\Map(S^1,X)_1$ of our diffeological groupoid.

First, we do indeed have a 2-commuting square
\[
	\xymatrix{
		\Map(S^1,X)_1 \ar[rr]^s \ar[dd]_t && \Map(S^1,X)_0 \ar[dd]^q_{\ }="s"\\
		\\
		\Map(S^1,X)_0 \ar[rr]_q^{\ }="t" && \cHom(S^1,\cX)
		\ar@{=>}"s";"t"
	}
\]
which we can see by considering a component of the top left corner labelled by $V_1,V_2 \in C(S^1)$. 
The projection
$$
	X^{\check C(V_1)}\times_{X^{\check C(V_{12})}}(X^\mathbbm{2})^{\check C(V_{12})}\times_{X^{\check C(V_{12})}} X^{\check C(V_2)} \to (X^\mathbbm{2})^{\check C(V_{12})}  
$$
can be unwound to give a natural transformation between the maps $q\circ s$ and $q\circ t$.

To show the above diagram is indeed a comma square, we shall show that for any Euclidean space $\mathbb{R}^n$ the diagram of groupoids 
\[
	\centerline{
	\xymatrix{
		X^{\check C(V_1)}\times_{X^{\check C(V_{12})}}(X^\mathbbm{2})^{\check C(V_{12})}\times_{X^{\check C(V_{12})}} X^{\check C(V_2)}(\mathbb{R}^n)
		\ar[rr] \ar[dd] && X^{\check C(V_1)}(\mathbb{R}^n) \ar[dd]^q_{\ }="s"\\
		\\
		X^{\check C(V_2)}(\mathbb{R}^n) \ar[rr]_q^{\ }="t" && \cHom(S^1,\cX)(\mathbb{R}^n)
		\ar@{=>}"s";"t"
	}
	}
\]
is a comma square. It is immediate that all but the bottom right corner are sets, so we need to show the canonical map
\begin{multline*}
	c\colon X^{\check C(V_1)}\times_{X^{\check C(V_{12})}}(X^\mathbbm{2})^{\check C(V_{12})}\times_{X^{\check C(V_{12})}} X^{\check C(V_2)}(\mathbb{R}^n) \\
	\longrightarrow X^{\check C(V_1)} \downarrow_{\cHom(S^1,\cX)(\mathbb{R}^n)} X^{\check C(V_2)}(\mathbb{R}^n)
\end{multline*}
is a bijection. These sets are as follows: 
\begin{multline*}
	X^{\check C(V_1)}\times_{X^{\check C(V_{12})}}(X^\mathbbm{2})^{\check C(V_{12})}\times_{X^{\check C(V_{12})}} X^{\check C(V_2)}(\mathbb{R}^n) \\
	\simeq \left\{ (f,\alpha,g)\, \left| \;
	\raisebox{4.5ex}{
	\xymatrix{
		\mathbb{R}^n \times \check C(V_{12}) \ar[rr] \ar[d] 
			&& \mathbb{R}^n \times \check C(V_1) 
			\ar[d]^{f}_{\ }="s" \\
		\mathbb{R}^n \times \check C(V_2) \ar[rr]_{g}^(.65){\ }="t" && X 
		\ar@{=>}"s";"t"_{\alpha}
	}
	} \right.
	\right\}
\end{multline*}
and
\begin{multline*}
	X^{\check C(V_1)} \downarrow_{\cHom(S^1,\cX)(\mathbb{R}^n)} X^{\check C(V_2)}(\mathbb{R}^n) \\
	\simeq 
	\left\{ (\tilde f,\tilde \alpha,\tilde g)\, \left| \;
	\raisebox{4.5ex}{
	\xymatrix{
		\mathbb{R}^n \ar[rr]^{\tilde f} \ar[d]_{\tilde g} 
			&& X^{\check C(V_1)} 
			\ar[d]^{q}_(.45){\ }="s" \\
		X^{\check C(V_2)} \ar[rr]_{q}^{\ }="t" && \cHom(S^1,\cX)
		\ar@{=>}"s";"t"_{\tilde\alpha}
	}
	} \right.
	\right\}.
\end{multline*}
It is not difficult to see that $c$ must be injective; in particular $f$ and $g$ correspond to $\tilde f$ and $\tilde g$, respectively. 
Unravelling the description of $\tilde \alpha$ we can see it must arise from some $\alpha$ as in the first set, and so the map is bijective, and hence $X^{\check C(V_1)} \downarrow_{\cHom(S^1,\cX)(\mathbb{R}^n)} X^{\check C(V_2)}$ is representable, by the component of $\Map(S^1,X)_1$ labelled by $V_1,V_2$.

Since $q$ is an epimorphism, we then see that $q$ is representable (by Lemma~\ref{lemma:representable_maps_local_on_target}) and hence

\begin{theorem}

	For $X$ a diffeological groupoid, the Hom-stack $\cHom(S^1,X)$ is presented by the diffeological groupoid $\Map(S^1,X)$.

\end{theorem}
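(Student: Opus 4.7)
My plan is to verify the definition of presentation directly: I need to produce a representable epimorphism $q\colon A \to \cHom(S^1,\cX)$ from a diffeological space $A$ whose comma object $A \downarrow_{\cHom(S^1,\cX)} A$ is representable by a diffeological space, in such a way that the resulting internal groupoid is precisely $\Map(S^1,X)$. The candidate map $q$ is the one already constructed from the evaluation anafunctors indexed by the covers $V \in C(S^1)$, with object space $\Map(S^1,X)_0 = \coprod_{V} \inhom{\vC(V)}{X}$.

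The first step is handled by Proposition~\ref{prop:presentation_map_an_epi}, which already shows that $q$ is an epimorphism of stacks. So the remaining work is to identify the comma object $q \downarrow q$. Since stacks on $(\cD,\Subd)$ are determined by their values on Cartesian spaces, and comma objects of stacks are computed pointwise, I would compute, for each $\mathbb{R}^n$ and each pair $V_1, V_2 \in C(S^1)$, the groupoid $\inhom{\vC(V_1)}{X}(\mathbb{R}^n) \downarrow_{\cHom(S^1,\cX)(\mathbb{R}^n)} \inhom{\vC(V_2)}{X}(\mathbb{R}^n)$ and check that it is the set of triples $(f,\alpha,g)$ parametrizing the corresponding component of $\Map(S^1,X)_1$.

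The main obstacle, and the only real content of the proof, is to construct a bijection $c$ between the fiber product $\inhom{\vC(V_1)}{X} \times_{\inhom{\vC(V_{12})}{X}} \inhom{\vC(V_{12})}{(X^\mathbbm{2})} \times_{\inhom{\vC(V_{12})}{X}} \inhom{\vC(V_2)}{X}$ evaluated at $\mathbb{R}^n$ and the comma set on the other side, and to show this is natural in $\mathbb{R}^n$. Injectivity is essentially automatic because the first and third coordinates $f,g$ of a triple on the left determine the underlying maps $\tilde f, \tilde g$ on the right via composition with $q$, and $\alpha$ is recovered from $\tilde\alpha$ by precomposing with the unit for the evaluation anafunctor. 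For surjectivity, given a 2-morphism $\tilde\alpha$ in $\cHom(S^1,\cX)(\mathbb{R}^n)$ between anafunctors $\mathbb{R}^n \times S^1 \gento X$ of the form induced by $\tilde f$ and $\tilde g$, I would invoke Theorem~\ref{thm:anafunctors_are_stack_maps} to represent $\tilde \alpha$ by a transformation of anafunctors over a common refinement, and check that the chosen refinement $V_{12}$ of $V_1 \times_{S^1} V_2$ already supports such a transformation; here one has to argue carefully that no further refinement is needed because the covering $V_1 \times V_2 \to S^1$-pieces already pull back via the product projections as in our definition of transformation.

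Having established the bijection $c$ for every $\mathbb{R}^n$, the comma object $q \downarrow q$ is representable by the diffeological space $\Map(S^1,X)_1$, so Lemma~\ref{lemma:representable_maps_local_on_target} applies and $q$ is representable as a map of stacks in $\Stack_\cD$. Combined with the fact that $q$ is an epimorphism and both projections from $\Map(S^1,X)_1$ are subductions (being split by the unit, as noted in the construction), this exhibits $\Map(S^1,X)$ as a diffeological groupoid presenting $\cHom(S^1,\cX)$, completing the proof.
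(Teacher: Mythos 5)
Your proposal is correct and follows essentially the same route as the paper: the same map $q$ built from the evaluation anafunctors, Proposition~\ref{prop:presentation_map_an_epi} for the epimorphism, the pointwise identification of the comma object $q\downarrow q$ with each component of $\Map(S^1,X)_1$ via the bijection $c$ on $\mathbb{R}^n$-points, and Lemma~\ref{lemma:representable_maps_local_on_target} to upgrade this to representability of $q$. The only difference is cosmetic: you flag explicitly the point (that a transformation $\tilde\alpha$ a priori living over a finer refinement already descends to $V_{12}$) which the paper compresses into ``unravelling the description of $\tilde\alpha$''.
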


Note that there was nothing special about $S^1$ in this argument: we only required $S^1$ to be a manifold in order for the proof of Proposition~\ref{prop:presentation_map_an_epi} to work.
However for the next section the analysis is more delicate and so we have only treated the case of $S^1$.

\section{Presentation by a Fr\'echet--Lie groupoid}\label{sec:Frechet}

The diffeological groupoid $\Map(S^1, X)$ can also be considered in the case that $X$ is a Lie groupoid. In this section we will show that whenever $X$ is a Lie groupoid, the diffeological groupoid $\Map (S^1, X)$ defined in section \ref{section:diffeological_groupoid} is in fact a Fr\'echet--Lie groupoid (Theorem \ref{thm:mapping-groupoid-is-Frechet-Lie}) and that it also weakly presents the Hom-stack $\cHom(S^1,\cX)$ over the site of manifolds (Theorem \ref{thm:hom-stack-weakly-differentiable}).

From now on we will work with the coverage $C$ as in section \ref{section:diffeological_groupoid} but we will always use \emph{minimal} covers of $S^1$ (those such that triple intersections are empty), which are cofinal in $C(S^1)$. We denote the set of these minimal covers by $C(S^1)_{\text{min}}$ The object space is then 
\[
	\coprod_{V\in C(S^1)_\text{min}} \inhom{\vC(V)}{X},
\]
where again each component $\inhom{\vC(V)}{X}$ is the space of (smooth) functors $\check C(V) \to X$. 
This is naturally described as the iterated pullback 
\[
	X_0^{I_1} \times_{X_0^{J_1}} X_1^{J_1} \times_{X_0^{J_1}} X_0^{I_2}
	\times_{X_0^{J_2}}  X_1^{J_2} \times_{X_0^{J_2}}  \ldots \times_{X_0^{J_{n-1}}} X_0^{I_n}
\]
where $I_i$ are closed subintervals of $S^1$, $V = \coprod_{i=1}^n I_i$ and $J_i = I_i \cap I_{i+1}$, the maps
\[
	X_0^{I_{i}} \to X_0^{J_i} \leftarrow X_0^{I_{i+1}}
\]
are given by restriction, and the maps $X_1^{J_i} \to X_0^{J_i}$ are
induced by the source and target maps alternately. Here the $I_i$ and $J_i$ are intervals so that a functor $\check C(V) \to X$ consists of a series of paths $I_i \to X_0$ and
a series of paths $J_i \to X_1$ that ``patch together using source and
target''.

Recall that a pullback of a submersion in the category of Fr\'echet manifolds exists, and is again a submersion. Our strategy is to show that the maps above are all submersions, which will imply that the object space is a Fr\'echet manifold.

The following result of Stacey \cite{Stacey_13} guarantees that the maps $X_1^{J_i} \to X_0^{J_i}$ are submersions; see also \cite[Lemma 2.4]{Amiri-Schmeding}.

\begin{theorem}[Stacey]\label{Staceys_thm}
 Let $M \to N$ be a submersion of finite-dimensional manifolds and $K$ a compact
manifold. Then the induced map of Fr\'echet manifolds
$M^K \to N^K$ is a submersion.
\end{theorem}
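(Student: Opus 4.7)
The plan is to produce, for each $\tilde g\in M^K$ with image $g:=f\circ\tilde g\in N^K$, a chart on $M^K$ centred at $\tilde g$ that identifies $f_*\colon M^K\to N^K$ with the projection out of a direct sum of Fr\'echet spaces. Write $\mathcal{V}:=\ker df\subset TM$ for the vertical bundle.

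First I would construct a fibre-preserving vertical exponential $E\colon\mathcal{V}\supset\Omega\to M$ on an open neighbourhood of the zero section, with $E(0_m)=m$, $f\circ E=f\circ\pi_{\mathcal{V}}$, and differential the identity along the zero section. Because $f$ is a submersion, $M$ admits an open cover by submersion charts in which $f$ is a linear projection $\mathbb{R}^n\times\mathbb{R}^k\to\mathbb{R}^n$; on each chart, define $E$ by translation in the fibre coordinate, $(x,y;v)\mapsto(x,y+v)$. A partition of unity on $M$ glues these into a global fibre-preserving $E$: convex combinations of fibre-preserving maps remain fibre-preserving, and the differential along the zero section stays the identity. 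Compactness of $\tilde g(K)$ then yields a uniform $\delta>0$ on which $E$ is a diffeomorphism onto its image along a neighbourhood of $\tilde g(K)$.

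Next I would build a smooth local section $\tau\colon\mathcal{V}_g\to M^K$ of $f_*$ through $\tilde g$, on some open $\mathcal{V}_g\ni g$ in $N^K$. Choosing a horizontal distribution $H\subset TM$ complementary to $\mathcal{V}$ and working in finitely many submersion charts covering $\tilde g(K)$, one can, for $h$ sufficiently close to $g$, define $\tau(h)$ by lifting $h$ through $\tilde g$ chart-by-chart and patching via a partition of unity on $K$, using $E$ to correct the patched result back into the correct fibres. Smoothness of $\tau$ in the Fr\'echet sense follows from the standard fact that post-composition with a smooth finite-dimensional map sends $C^\infty(K,-)$ smoothly to $C^\infty(K,-)$.

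Finally, I would combine these into the chart
\[
\Phi\colon\mathcal{V}_g\times\Gamma(\tilde g^{*}\mathcal{V})\supset\widetilde{\Omega}\longrightarrow M^K,\qquad \Phi(h,\xi)(k)=E\bigl(P_{h,k}(\xi_k)\bigr),
\]
where $P_{h,k}\colon\mathcal{V}_{\tilde g(k)}\to\mathcal{V}_{\tau(h)(k)}$ is a smooth transport of vertical vectors (available on a small neighbourhood from the local product structure and the horizontal distribution). By construction $f_*\circ\Phi(h,\xi)=h$, so $f_*$ appears as the first projection in this chart, and the derivative of $\Phi$ at $(g,0)$ realises the canonical isomorphism $T_gN^K\oplus\Gamma(\tilde g^{*}\mathcal{V})\simeq T_{\tilde g}M^K$ coming from the splitting $TM=H\oplus\mathcal{V}$. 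A smooth inverse is written down pointwise using $E^{-1}$, so no Fr\'echet-space inverse function theorem is invoked; since $\Gamma(\tilde g^{*}\mathcal{V})\cong C^\infty(K,\mathbb{R}^{\mathrm{rk}\,\mathcal{V}})$ is a Fr\'echet space, this is the required submersion chart. The hardest part of the argument is ensuring that the pointwise constructions—the glued $E$, the section $\tau$, and especially the transport $P_{h,k}$ with its dependence on $h$—are smooth as maps of Fr\'echet manifolds rather than merely smooth in their finite-dimensional variables; this reduces to the basic fact that smooth finite-dimensional maps induce smooth maps on $C^\infty(K,-)$ for compact $K$, combined with careful use of compactness of $\tilde g(K)$ to extract uniform neighbourhoods.
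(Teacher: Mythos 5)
The paper does not prove this statement at all: it is imported verbatim from Stacey \cite{Stacey_13} (see also \cite[Lemma~2.4]{Amiri-Schmeding}), so there is no internal proof to compare against. Your outline is, in substance, the same strategy as the cited source: build a local addition on $M$ \emph{adapted to the submersion} (your fibre-preserving vertical exponential $E$ together with a horizontal complement), use it to manufacture submersion charts on $M^K$ in which $f_*$ becomes the projection $\Gamma(g^*TN)\oplus\Gamma(\tilde g^*\mathcal V)\to\Gamma(g^*TN)$, and reduce all smoothness claims to the $\Omega$-lemma (post-composition by a smooth finite-dimensional map is smooth on $C^\infty(K,-)$). This matches the definition of submersion of Fr\'echet manifolds used in Section~2 of the paper, so the overall architecture is sound.

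Two steps need repair before this is a proof. First, your gluing of $E$ does not typecheck as written: a ``convex combination of fibre-preserving maps'' $\mathcal V\to M$ has no meaning, since the target is a manifold. The standard fix is to glue at the infinitesimal level --- take a partition-of-unity combination of fibrewise Riemannian metrics (or fibrewise sprays) and let $E$ be the resulting fibrewise exponential; this is automatically fibre-preserving, has differential the identity along the zero section, and $(\pi_{\mathcal V},E)$ is a diffeomorphism near the zero section by the finite-dimensional inverse function theorem plus compactness of $\tilde g(K)$. Second, the local section $\tau$ is the real content of the theorem and is only gestured at. The clean construction is to fix a local addition $\eta$ on $N$ giving the chart of $N^K$ at $g$, lift each tangent vector $\zeta_h(k)\in T_{g(k)}N$ horizontally to $H_{\tilde g(k)}$, and define $\tau(h)(k)$ by horizontal lifting of the path $t\mapsto\eta(t\zeta_h(k))$; one must then invoke compactness of $\tilde g(K)$ to get a uniform neighbourhood of $g$ on which these lifts exist for all $t\in[0,1]$ (a general submersion has no global horizontal path lifting). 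Your alternative --- chart-by-chart lifts averaged in the fibres via $E$ --- can be made to work, but the averaging must again happen in the linear data ($E^{-1}$ of the candidate lifts, combined with a partition of unity on $K$), not on the maps themselves. With those repairs, and with $P_{h,k}$ defined by gluing the linear identifications of $\mathcal V$ coming from finitely many submersion charts, the argument goes through and agrees with Stacey's.
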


For the maps $X_0^{I_{i}} \to X_0^{J_i} \leftarrow X_0^{I_{i+1}}$ we will need the following theorem, which may be derived from
the result in \cite{Seely_64} (see also \cite[\S 7]{Mitjagin_61}, which essentially proves Corollary~\ref{Corollary_to_Seely} directly).

\begin{theorem}[Seely]
The Fr\'echet space $(\RR^n)^{\RR_+}$ is a direct summand of
$(\RR^n)^\RR$, where we take the topology of uniform convergence of all
derivatives on compact subsets.
\end{theorem}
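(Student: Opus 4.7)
The plan is to prove this by constructing a continuous linear section of the restriction map
\[
R \colon C^\infty(\RR, \RR^n) \longrightarrow C^\infty(\RR_+, \RR^n).
\]
Given a continuous linear $E$ with $R \circ E = \id$, the assignment $f \mapsto (Rf,\, f - ERf)$ provides a topological isomorphism $C^\infty(\RR, \RR^n) \cong C^\infty(\RR_+, \RR^n) \oplus \ker R$, and since $\ker R$ is closed, this exhibits $(\RR^n)^{\RR_+}$ as a topological direct summand of $(\RR^n)^{\RR}$.

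The first step is to reduce to the scalar case $n = 1$: since the Fr\'echet topology on $C^\infty(\RR, \RR^n)$ is the product of $n$ copies of the topology on $C^\infty(\RR, \RR)$, any continuous linear section in the scalar case extends componentwise.

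The second step is to construct $E$ following Seeley's classical extension procedure. Fix a smooth cutoff $\phi \colon \RR \to [0,1]$ with $\phi \equiv 1$ on a neighbourhood of $0$ and supported in $[-2,2]$. Choose a sequence of positive reals $a_k \to \infty$ and real coefficients $b_k$ satisfying the moment conditions $\sum_{k\geq 0} b_k (-a_k)^j = 1$ for every $j \geq 0$, with the $b_k$ decaying rapidly enough that all the series below converge. For $f \in C^\infty(\RR_+, \RR)$, define
\[
(Ef)(x) = \begin{cases}
 f(x), & x \geq 0, \\
 \displaystyle\sum_{k=0}^\infty b_k\, \phi(a_k x)\, f(-a_k x), & x < 0.
\end{cases}
\]
The cutoff $\phi(a_k x)$ ensures that for each $x < 0$ only finitely many terms are nonzero, so $Ef$ is smooth on $(-\infty, 0)$; the moment conditions force the one-sided derivatives at $0$ from the left to agree with those of $f$ from the right. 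Continuity of $E$ in the Fr\'echet topology reduces to estimating each $C^k$-seminorm on a compact set $K \subset \RR$, and the same estimates Seeley uses to prove convergence show each such seminorm of $Ef$ is bounded by a finite sum of $C^k$-seminorms of $f$ on a compact subset of $\RR_+$.

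The main obstacle is the simultaneous selection of $(a_k, b_k)$: the moment conditions on the $b_k$ form an infinite linear system, and one must ensure that a solution exists whose growth is controlled by the decay of the $a_k^{-j}$ well enough to produce a convergent series in every $C^k$-seminorm on every compact set. This is precisely the content of Seeley's original argument, where the $a_k$ are taken to grow geometrically and the $b_k$ are obtained from a Vandermonde-type inversion; the estimates are classical and port directly to our setting without modification.
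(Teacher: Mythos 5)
Your proposal is correct and is essentially a reconstruction of Seeley's original extension-operator argument, which is exactly what the paper relies on: the theorem is stated there without proof, as an imported result from Seeley's 1964 paper (with Mitjagin cited as an alternative source). The one genuinely delicate point --- the existence of coefficients $(a_k,b_k)$ satisfying the infinite moment system $\sum_k b_k(-a_k)^j=1$ together with the rapid decay $\sum_k |b_k|\,a_k^{\,j}<\infty$ for every $j$, which is what makes every $C^k$-seminorm of $Ef$ controllable --- you correctly isolate and defer to Seeley's Vandermonde-type construction, so nothing essential is missing.
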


\begin{corollary}\label{Corollary_to_Seely}
The Fr\'echet space $(\RR^n)^{[0,1]}$ is a direct summand of
$(\RR^n)^{[-1,1]}$, hence the restriction map $(\RR^n)^{[-1,1]} \to (\RR^n)^{[0,1]}$
is a submersion of Fr\'echet spaces. The same is true with $[0,1]\subset [-1,1]$ replaced with any inclusion $J \subset I$ of compact intervals.
\end{corollary}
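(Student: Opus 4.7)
The plan is to produce a continuous linear section of the restriction map $r\colon (\RR^n)^{[-1,1]} \to (\RR^n)^{[0,1]}$. Given such a section, the kernel of $r$ is a complementary closed subspace, giving the direct summand decomposition; globally $r$ then becomes the projection out of a direct sum of Fr\'echet spaces, which is of the form $V\oplus W\to V$ and hence a submersion at every point.

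To construct the section $s\colon (\RR^n)^{[0,1]} \to (\RR^n)^{[-1,1]}$, fix once and for all a smooth cutoff $\chi\in C^\infty(\RR)$ with $\chi\equiv 1$ on $[0,1/3]$ and $\chi\equiv 0$ on $[2/3,\infty)$. For $f\in C^\infty([0,1],\RR^n)$, the product $\chi f$ extended by zero past $t=1$ is smooth on $[0,\infty)$, and $f\mapsto \chi f$ is continuous linear $(\RR^n)^{[0,1]} \to (\RR^n)^{[0,\infty)}$. Compose with Seeley's continuous linear splitting $E_S\colon (\RR^n)^{[0,\infty)} \to (\RR^n)^\RR$ (whose existence is the content of the theorem just quoted) and then with restriction to $[-1,0]$, to obtain a continuous linear operator $T\colon (\RR^n)^{[0,1]} \to (\RR^n)^{[-1,0]}$. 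Define
$$
s(f)(t) = \begin{cases} f(t), & t\in[0,1],\\ T(f)(t), & t\in[-1,0]. \end{cases}
$$
Since $\chi\equiv 1$ near $0$, the functions $f$ and $\chi f$ have identical Taylor series at $0$, so all one-sided derivatives at $0$ of $T(f)=E_S(\chi f)|_{[-1,0]}$ equal those of $f$; hence $s(f)$ is smooth across $0$, and therefore on all of $[-1,1]$. By construction $r\circ s=\mathrm{id}$ and $s$ is continuous linear, which is what we needed.

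For an arbitrary inclusion $J\subset I$ of compact intervals, I would distinguish cases by how $J$ sits inside $I$. If $J$ shares an endpoint with $I$, an affine diffeomorphism of $\RR$ carrying that pair to $[0,1]\subset[-1,1]$ reduces the assertion to the case above; if both endpoints of $J$ lie in the interior of $I$, then the same construction is performed at each end using two cutoffs with disjoint supports localised near the respective endpoints of $J$.

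The main obstacle is not technical but conceptual: one must observe that multiplying $f$ by $\chi$ does not disturb the germ of $f$ at $0$, so that Seeley's extension $E_S(\chi f)$, when evaluated to the left of $0$, has precisely the Taylor coefficients at $0$ needed for the pasted function $s(f)$ to be smooth. Once this is noted, the remainder of the argument is routine continuity of composition, restriction, and multiplication by a smooth compactly supported function in the $C^\infty$ Fr\'echet topologies.
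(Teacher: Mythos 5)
Your proof is correct and follows the route the paper itself indicates: the paper gives no written proof of this corollary, merely citing Seeley (and Mitjagin) for the continuous linear extension operator, and your cutoff-then-extend construction of a continuous linear section of the restriction map is the standard way to derive the interval statement from that theorem. (One small simplification: since $\chi\equiv 1$ on $[0,1/3]$, the Seeley extension of $\chi f$ agrees with $f$ on that whole neighbourhood of $0$, so smoothness of the pasted function is immediate without comparing one-sided Taylor coefficients.)
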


This allows us to prove

\begin{proposition}\label{restriction_is_subm}
Let $M$ be an $n$-dimensional manifold and $J \subset I$ two compact intervals.
Then the restriction map $M^I \to M^J$ is a submersion of Fr\'echet manifolds.
\end{proposition}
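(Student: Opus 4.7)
The plan is to reduce to the Euclidean case already established in Corollary~\ref{Corollary_to_Seely} by building Fr\'echet charts on $M^I$ and $M^J$ around a fixed loop in which the restriction map looks like the linear restriction $(\RR^n)^I \to (\RR^n)^J$.

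First, fix $\gamma \in M^I$, i.e.\ a smooth map $\gamma\colon I \to M$. Equip $M$ with a Riemannian metric and consider the pullback bundle $\gamma^*TM \to I$. Since $I$ is a contractible compact interval, $\gamma^*TM$ is trivialisable; fix a trivialisation $\gamma^*TM \cong I \times \RR^n$. Using the exponential map of the chosen metric, together with a compactness argument giving a uniform injectivity radius along the image of $\gamma$, the assignment $\xi \mapsto \bigl(t \mapsto \exp_{\gamma(t)}(\xi(t))\bigr)$ gives a Fr\'echet chart around $\gamma$ in $M^I$ whose domain is an open neighbourhood $U_\gamma$ of $0$ in $\Gamma(I, \gamma^*TM)$, which under the trivialisation is an open subset of $(\RR^n)^I$. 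Performing the same construction for $\gamma|_J$ yields a chart around $\gamma|_J$ in $M^J$ modelled on an open subset of $\Gamma(J, \gamma^*TM|_J) \cong (\RR^n)^J$.

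Next, observe that the trivialisation of $\gamma^*TM$ restricts to a trivialisation of $\gamma^*TM|_J$, and restriction of sections $\Gamma(I,\gamma^*TM) \to \Gamma(J,\gamma^*TM|_J)$ corresponds under these identifications to the plain restriction map $(\RR^n)^I \to (\RR^n)^J$. Moreover, because $\exp$ is evaluated pointwise, the restriction map $M^I \to M^J$, read in the two charts above, is exactly this linear restriction of sections. By Corollary~\ref{Corollary_to_Seely}, this linear map is a submersion of Fr\'echet spaces, i.e.\ a split surjection, so it is globally of the form $V \oplus W \to V$. This exhibits $M^I \to M^J$ as a submersion at $\gamma$, and since $\gamma$ was arbitrary, the map is a submersion.

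The only delicate point is the construction of the exponential chart: one needs a single open neighbourhood of $0 \in \Gamma(I,\gamma^*TM)$ in the $C^\infty$ topology on which $(t,\xi)\mapsto \exp_{\gamma(t)}\xi(t)$ is well-defined and a diffeomorphism onto its image in $M^I$, and one must check that restricting such $\xi$ to $J$ lands inside the analogous domain for $M^J$. Both points follow from compactness of $I$ (hence a uniform positive lower bound on the injectivity radius along $\gamma(I)$) together with the standard fact that the exponential map at the level of smooth mapping spaces is a local diffeomorphism; the restriction compatibility is automatic since the exponential chart is defined pointwise in $t$.
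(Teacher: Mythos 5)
Your proof is correct and follows essentially the same route as the paper: both arguments use the standard Hamilton-style charts on $M^I$ modelled on sections of $f^*TM$ (which you construct explicitly via a Riemannian exponential map, where the paper simply cites \cite[\S I.4.1]{Hamilton_82}), trivialise the pullback bundle over the contractible interval, identify the restriction map in these charts with the linear restriction $(\RR^n)^I \to (\RR^n)^J$, and conclude by Corollary~\ref{Corollary_to_Seely}. The extra care you take over the uniform injectivity radius and the pointwise compatibility of the charts with restriction is a welcome elaboration of details the paper leaves implicit.
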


\begin{proof}  
Let $f\colon I \to M$ be a smooth function, and denote by $f_J\colon J \to M$ its restriction along the inclusion. To show that $M^I \to M^J$ is a submersion, we need to find charts around $f$ and $f_J$ such that the map is a submersion of Fr\'echet spaces on those charts. Recall \cite[\S I.4.1]{Hamilton_82} that a chart around $f$ is a neighbourhood of the zero section in $\Gamma(I,I\times_{f,M} TM)$, and similarly for $f_J$. Clearly $I\times_{f,M} TM \simeq I\times \mathbb{R}^n$, and given such an isomorphism we get an induced isomorphism $J\times_{f_J,M} TM \simeq J\times\mathbb{R}^n$ that is compatible with the restriction map. The induced map on spaces of sections,
\[
	(\mathbb{R}^n)^I = \Gamma(I,I\times\mathbb{R}^n) \to \Gamma(J,J\times\mathbb{R}^n) = (\mathbb{R}^n)^J,
\]
is just the obvious restriction map, and this map is locally the same, after unwinding the isomorphisms just given, to the restriction map. But Corollary \ref{Corollary_to_Seely} says that this map is a submersion, as we needed.
\end{proof}

Proposition~\ref{restriction_is_subm} implies that the maps $X_0^{I_{i}} \to X_0^{J_i} \leftarrow X_0^{I_{i+1}}$ are submersions and hence we have

\begin{proposition}
For $X$ a Lie groupoid, the object space $\Map(S^1, X)_0$ is a Fr\'echet manifold.
\end{proposition}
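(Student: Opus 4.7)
The plan is to assemble $\Map(S^1,X)_0 = \coprod_{V \in C(S^1)_\text{min}} \inhom{\vC(V)}{X}$ out of its components using the iterated fibre product description already displayed in the excerpt, and to verify piece by piece that every ingredient and every map is of the right type to keep us inside the category of Fr\'echet manifolds. Since the category of Fr\'echet manifolds admits arbitrary small coproducts, it suffices to show that each component $\inhom{\vC(V)}{X}$ is a Fr\'echet manifold, for $V = \coprod_{i=1}^n I_i \in C(S^1)_\text{min}$, with $J_i = I_i \cap I_{i+1}$.

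First, I would note that each factor $X_0^{I_i}$ and $X_1^{J_i}$ appearing in the iterated fibre product is already known to be a Fr\'echet manifold, since the source spaces $I_i$ and $J_i$ are compact intervals and the target spaces $X_0$, $X_1$ are finite-dimensional manifolds (this is the classical result that $M^K$ is a Fr\'echet manifold for $K$ compact and $M$ finite-dimensional; see e.g.~\cite{Hamilton_82}). Next, I would check that each of the structure maps entering the fibre product is a submersion of Fr\'echet manifolds. The maps $X_1^{J_i} \to X_0^{J_i}$ induced by the source and target of $X$ are submersions by Stacey's Theorem~\ref{Staceys_thm}, since $s,t\colon X_1 \to X_0$ are submersions of finite-dimensional manifolds and $J_i$ is compact. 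The restriction maps $X_0^{I_i} \to X_0^{J_i}$ and $X_0^{I_{i+1}} \to X_0^{J_i}$ are submersions by Proposition~\ref{restriction_is_subm}.

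Now I would take iterated fibre products. Recall that in the category of Fr\'echet manifolds, the pullback of a submersion along any smooth map exists and is again a submersion; this lets us build the iterated pullback inductively from left to right. At each stage the partially formed fibre product is a Fr\'echet manifold, the next projection into $X_0^{J_i}$ is a composite of a known projection with a submersion and is therefore smooth, and the next factor to be glued on comes equipped with a submersion into the same $X_0^{J_i}$. The resulting pullback is a Fr\'echet manifold, with its projection to the next fibre gluing space still a submersion, so the induction continues. After $2n-1$ such steps we conclude that $\inhom{\vC(V)}{X}$ is a Fr\'echet manifold. Taking the coproduct over $V \in C(S^1)_\text{min}$ yields that $\Map(S^1,X)_0$ is a Fr\'echet manifold.

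The main obstacle is really bookkeeping rather than a genuine difficulty: one must check that the alternating pattern of source/target maps and restriction maps produces a coherent sequence of submersions so that the inductive construction of the iterated pullback never breaks, and that the smooth structure built this way is independent of how the pullbacks are parenthesised. Both facts follow from the universal property of pullbacks and the stability of submersions of Fr\'echet manifolds under pullback, so no further input is required beyond Theorem~\ref{Staceys_thm} and Proposition~\ref{restriction_is_subm}.
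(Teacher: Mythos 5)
Your proposal is correct and follows essentially the same route as the paper: the component $\inhom{\vC(V)}{X}$ is built as an iterated fibre product whose gluing maps are submersions, by Theorem~\ref{Staceys_thm} for the maps $X_1^{J_i}\to X_0^{J_i}$ and Proposition~\ref{restriction_is_subm} for the restriction maps, and pullbacks of submersions exist in the category of Fr\'echet manifolds. The paper states this argument more tersely, but your bookkeeping of the inductive construction is exactly the intended content.
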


To see that the set of arrows has a manifold structure as well, recall that this set is given by 
\[
	\coprod_{V_1, V_2 \in C(S^1)_\text{min}}  \inhom{\vC(V_1)}{X} \times_{\inhom{\vC(V_{12})}{X}} \inhom{\vC(V_{12})}{(X^\mathbbm{2})} \times_{\inhom{\vC(V_{12})}{X}} \inhom{\vC(V_2)}{X}
\]
where the chosen refinement $V_{12}$ is also a minimal cover of $S^1$. To use the same reasoning as above we need to know that the maps 
\[
	\inhom{\vC(V_{12})}{(X^\mathbbm{2})} \to \inhom{\vC(V_{12})}{X},
\]
induced by $S,T\colon X^\mathbbm{2} \to X$, and 
\[
	\inhom{\vC(V_i)}{X} \to \inhom{\vC(V_{12})}{X}
\]
($i = 1, 2$) are submersions.

Now for $M \to N$ a map of finite-dimensional manifolds, and $C \to D$ a map of compact manifolds with boundary, the two induced maps
\[
	M^C \to N^C,\quad \text{and}\quad M^D \to M^D
\]
have a rather nice property in that on certain canonical charts they are actually \emph{linear} maps (recall that these maps above look locally like maps between spaces of sections induced by vector bundle maps). 
More generally one can consider larger diagrams, all of whose maps have this local linearity, and further the charts exhibiting this local behaviour can all be chosen compatibly.
Such a diagram will be called be called \emph{locally linear}.

An example of such a diagram is one where all the objects are mapping spaces as above, and all arrows are induced by pre- or post-composition as above.
A much simpler and familiar example would be in the finite-dimensional setting, where the exponential map is a local diffeomorphism. 
The induced diagram on tangent spaces, for any compatible system of basepoints, is then a diagram of vector spaces.

We have the following Lemma:

\begin{lemma}\label{lemma:map_of_wide_pullbacks}
Let
\[
	\xymatrix{
		A_1 \ar[d] \ar[r] & A_2 \ar[d] & \ar[l] A_3 \ar[r] \ar[d] & \cdots & \ar[l] A_n \ar[d]\\
		B_1 \ar[r] & B_2 & \ar[l] B_3 \ar[r]  & \cdots & \ar[l] B_n 
	}
\]
be a diagram of submersions that is locally linear. 
Then the natural map 
\[
	\lim A_i \to \lim B_i,
\]
where the limits are iterated fibre products, is also a submersion.
\end{lemma}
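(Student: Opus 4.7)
My plan is to reduce the statement to a linear-algebraic assertion about Fréchet spaces using the local linearity hypothesis, and then to construct a continuous linear section of the induced map on fibre products. Since being a submersion is a local condition on the domain, it suffices to verify the statement on the compatible charts provided by local linearity. On such charts, each $A_i$ and $B_i$ is an open subset of a Fréchet space, every arrow in the diagram is (the restriction of) a continuous linear map, and the iterated fibre products are open subsets of the corresponding linear iterated fibre products. In particular, each vertical submersion $\alpha_i\colon A_i \to B_i$ becomes a continuous linear split surjection. For a continuous linear map of Fréchet spaces, being a submersion is equivalent to admitting a continuous linear right-inverse, so it is enough to exhibit such a section for the induced linear map $\lim A_i \to \lim B_i$.

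I would then proceed by induction on $n$, the base case being $n = 3$, where the diagram consists of a single cospan:
\[
\xymatrix{
A_1 \ar[d]_{\alpha_1} \ar[r]^{p_1} & A_2 \ar[d]_{\alpha_2} & \ar[l]_{p_2} A_3 \ar[d]_{\alpha_3} \\
B_1 \ar[r]^{q_1} & B_2 & \ar[l]_{q_2} B_3
}
\]
The goal is to show that $A_1 \times_{A_2} A_3 \to B_1 \times_{B_2} B_3$ admits a continuous linear section. Given $(b_1, b_3) \in \lim B_i$ and splittings $s_i$ of the $\alpha_i$, the naive tuple $(s_1(b_1), s_3(b_3))$ need not lie in $\lim A_i$; its defect $p_1 s_1(b_1) - p_2 s_3(b_3)$ lies in $\ker \alpha_2$, and it depends continuously and linearly on $(b_1, b_3)$ thanks to local linearity of the $p_j$ and $q_j$. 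One then corrects $s_1(b_1)$ and $s_3(b_3)$ by suitable elements of $\ker \alpha_1$ and $\ker \alpha_3$ so that the adjusted pair lies in $A_1 \times_{A_2} A_3$ and still projects to $(b_1, b_3)$.

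The inductive step writes $\lim_{i \leq n} A_i$ as the fibre product of $\lim_{i \leq n-1} A_i$ with $A_n$ over $A_{n-1}$ (and analogously for the $B_i$), applies the inductive hypothesis to get a submersion from the truncated diagram, and invokes the base case to the resulting three-object zigzag---which inherits local linearity from the original diagram, since the chart structure on $\lim_{i \leq n-1} A_i$ and $\lim_{i \leq n-1} B_i$ is that induced from the linear limit. The main obstacle will be the base case: ensuring that the kernel corrections assemble into an honest continuous linear section rather than merely a pointwise lift. Concretely, this reduces to showing that the map $\ker \alpha_1 \oplus \ker \alpha_3 \to \ker \alpha_2$ given by $(k_1, k_3) \mapsto p_1(k_1) - p_2(k_3)$ is itself a split surjection of Fréchet spaces. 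This is a linear shadow of the original claim applied to the sub-diagram of kernels, and the hypothesis of local linearity---which makes the kernel sub-diagram into a diagram of continuous linear submersions with compatible splittings---is precisely what one needs to carry it out.
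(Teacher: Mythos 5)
Your overall strategy is the same as the paper's: use local linearity to replace the diagram by one of continuous linear maps between Fr\'echet spaces, reduce by induction on the length of the zig-zag to a single cospan, and exhibit a continuous linear section of $A_1\times_{A_2}A_3\to B_1\times_{B_2}B_3$. Up to and including the identification of the defect $p_1s_1(b_1)-p_2s_3(b_3)\in\ker\alpha_2$ and the reduction of the whole problem to split surjectivity of $(k_1,k_3)\mapsto p_1(k_1)-p_2(k_3)$ from $\ker\alpha_1\oplus\ker\alpha_3$ to $\ker\alpha_2$, your account is correct and in fact more explicit than the paper's, which merely asserts that a section can be found ``as one would in the finite-dimensional case''.

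The gap is in your final sentence. The surjectivity of $\ker\alpha_1\oplus\ker\alpha_3\to\ker\alpha_2$ is not a formal consequence of the stated hypotheses, and the kernel sub-diagram is not automatically a diagram of submersions: nothing in ``all maps are submersions and linear in compatible charts'' forces $p_1$ to carry $\ker\alpha_1$ onto $\ker\alpha_2$. A purely linear counterexample: take $A_1=A_2=A_3=\RR$ with $p_1=p_2=\id$, take $B_1=B_3=\RR$ and $B_2=0$, and let $\alpha_1=\alpha_3=\id$, $\alpha_2=0$. Every map is a split linear surjection and the diagram is (globally) linear, yet $\lim A_i\to\lim B_i$ is the diagonal $\RR\to\RR^2$, which is not a submersion; here $\ker\alpha_1=\ker\alpha_3=0$ while $\ker\alpha_2=\RR$. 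So the kernel-surjectivity you need is an additional input rather than a ``linear shadow'' of the hypotheses, and your appeal to local linearity at this point is circular. The input does hold in every instance in which the paper invokes the lemma---there the kernels are section spaces of vertical sub-bundles and the horizontal maps restrict on them to restriction maps along inclusions of intervals (split by Corollary~\ref{Corollary_to_Seely}) or to maps covered by Theorem~\ref{Staceys_thm}---but to complete your proof you must either build this condition into the hypotheses or verify it in each application; the paper's own two-line proof quietly elides the same point.
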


\begin{proof}

	The local linearity of the diagram means that we can find a diagram of the same shape in the category of Fr\'echet \emph{spaces} and linear maps, and in fact \emph{split} linear maps, since all of the maps are submersions, hence locally split.
	Then the proof that the induced map is a split submersion of Fr\'echet spaces proceeds exactly as one would in the finite-dimensional case. 
	One can induct on the length of the zig-zags and so reduce to the case of a diagram
	\[
		\xymatrix{
			A_1 \ar[d] \ar[r] & A_2 \ar[d] & \ar[l] A_3 \ar[d] \\
			B_1 \ar[r] & B_2 & \ar[l] B_3 
		}
	\]
	in the category of Fr\'echet spaces and linear maps and then show that one can find a section of the linear map $A_1\times_{A_2} A_3 \to B_1\times_{B_2} B_3$.
\end{proof}

Let $X \to Y$ be a functor between Lie groupoids such that the object and arrow components are submersions.
We call such a functor \emph{submersive}. 
We have the following result.

\begin{lemma}\label{lemma:submersive_functor_gives_submersion}\hfill
\begin{enumerate}
\item
Let $X \to Y$ be a submersive functor between Lie groupoids. 
Then the induced map
\[
	\inhom{\vC(V)}{X} \to \inhom{\vC(V)}{Y}
\]
is a submersion.
\item
Let $X$ be a Lie groupoid and $V_1 \to V_2$ be a refinement of minimal covers. 
Then the induced map
\[
	\inhom{\vC(V_2)}{X} \to \inhom{\vC(V_1)}{X}
\]
is a submersion.
\end{enumerate}
\end{lemma}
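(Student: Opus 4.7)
The plan for both parts is to recognise the map in question as a map of wide iterated-pullback diagrams of Fr\'echet manifolds, using the description of $\inhom{\vC(V)}{X}$ as
\[
X_0^{I_1} \times_{X_0^{J_1}} X_1^{J_1} \times_{X_0^{J_1}} X_0^{I_2} \times_{X_0^{J_2}} \cdots \times_{X_0^{J_{n-1}}} X_0^{I_n}
\]
recalled at the start of this section, and then to invoke Lemma \ref{lemma:map_of_wide_pullbacks}. The two key inputs will be Theorem \ref{Staceys_thm} of Stacey, to pass from submersions of finite-dimensional manifolds to submersions of Fr\'echet mapping spaces, and Proposition \ref{restriction_is_subm}, for submersivity of restriction maps between mapping spaces on compact intervals. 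Local linearity of the diagrams is automatic: in compatible charts on $X_0, X_1$ (and $Y_0, Y_1$), the source, target, unit, restriction and functor maps are all induced by fibrewise-linear maps of trivial vector bundles, hence produce fibrewise-linear maps on the ambient Fr\'echet spaces of sections.

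For part (1), the submersive functor $f \colon X \to Y$ yields, via Theorem \ref{Staceys_thm}, submersions $X_0^{I_i} \to Y_0^{I_i}$ and $X_1^{J_i} \to Y_1^{J_i}$ for each interval $I_i$ and junction $J_i$ appearing in the decomposition of $V$. Postcomposition with $f$ assembles these into a map from the iterated pullback diagram for $X$ to that for $Y$ whose vertical arrows are all submersions. Lemma \ref{lemma:map_of_wide_pullbacks} then immediately yields that $\inhom{\vC(V)}{X} \to \inhom{\vC(V)}{Y}$ is a submersion.

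For part (2), I would first reduce a general refinement $V_1 \to V_2$ to elementary types. The cleanest case is when $V_1$ has the same combinatorial structure as $V_2$, with each component of $V_1$ a compact subinterval of the corresponding component of $V_2$: here the map of iterated pullback diagrams is composed entirely of restriction maps $X_0^{I^{(2)}} \to X_0^{I^{(1)}}$ and $X_1^{J^{(2)}} \to X_1^{J^{(1)}}$, both submersions by Proposition \ref{restriction_is_subm} applied to $X_0$ and $X_1$, so Lemma \ref{lemma:map_of_wide_pullbacks} applies directly. The remaining case, which I expect to be the main obstacle, is a subdivision of a single component of $V_2$ into two consecutive intervals of $V_1$, producing a new \emph{artificial} junction $J^{(1)}$ interior to an interval of $V_2$. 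At this junction the relevant arrow of the diagram factors through the unit section $u \colon X_0 \to X_1$, which is not itself a submersion. The resolution should come from the fact that $u$ is simultaneously a section of both source and target submersions $X_1 \to X_0$: in suitable local charts this furnishes splittings of $u^{J^{(1)}}$ compatible with the splittings required elsewhere in the pullback, producing a split surjective linear local model, so that Lemma \ref{lemma:map_of_wide_pullbacks} applies here as well.
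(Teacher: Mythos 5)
Your part (1) is exactly the paper's argument: Theorem~\ref{Staceys_thm} supplies the vertical submersions between the factors of the iterated pullbacks, and Lemma~\ref{lemma:map_of_wide_pullbacks} assembles them; there is nothing to add. For part (2) the paper's entire proof is the assertion that it follows from Proposition~\ref{restriction_is_subm} and Lemma~\ref{lemma:map_of_wide_pullbacks}, which is precisely your first case: the refinement has the same combinatorial type, every arrow of the comparison diagram is a restriction map, and the argument closes. You have gone further than the paper by noticing that when the refinement subdivides an interval of $V_2$, the comparison diagram acquires arrows factoring through the unit section $u\colon X_0\to X_1$, which is not a submersion and is not covered by the cited ingredients. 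That observation is correct, and it is the crux of the matter.

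However, your proposed resolution of the subdivision case does not work, and the gap cannot be closed by the route you suggest. At a new junction $K=I^a\cap I^b$ interior to an interval $I$ of $V_2$, the induced map sends $\delta\in X_0^{I}$ to $\bigl(\delta|_{I^a},\,u\circ\delta|_{K},\,\delta|_{I^b}\bigr)$ in $X_0^{I^a}\times_{X_0^{K}}X_1^{K}\times_{X_0^{K}}X_0^{I^b}$, so the middle component of its derivative is $Tu(\xi|_K)$, entirely determined by the first component. A general tangent vector to the fibre product has middle component any $\eta$ with $Ts(\eta)=\xi|_K$ and $Tt(\eta)=\zeta|_K$, so the cokernel of the derivative contains all sections over $K$ of $\ker Ts\cap\ker Tt$ along the unit section, i.e.\ the isotropy directions of $X$. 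Whenever $X$ has positive-dimensional isotropy groups---for instance the groupoid with one object and arrows a positive-dimensional Lie group, or any bundle gerbe---the map is therefore not open and not a submersion, and no choice of splittings can repair this: the fact that $u$ splits both $s$ and $t$ is beside the point, since the obstruction lives in $\ker Ts\cap\ker Tt$. So your argument is a complete proof for refinements that do not subdivide intervals (which is all that Proposition~\ref{restriction_is_subm} plus Lemma~\ref{lemma:map_of_wide_pullbacks} can deliver), but the subdivision case needs either a restriction on the refinements considered, a hypothesis of discrete isotropy, or a genuinely different argument about the ambient fibre products in which these maps are actually used.
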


\begin{proof}
The first part follows from Theorem~\ref{Staceys_thm} and Lemma~\ref{lemma:map_of_wide_pullbacks} and the second follows from Proposition~\ref{restriction_is_subm} and Lemma~\ref{lemma:map_of_wide_pullbacks}.
\end{proof}

Lemma~\ref{lemma:submersive_functor_gives_submersion} implies that the maps above are submersions and so we have

\begin{proposition}
For $X$ a Lie groupoid, the arrow space $\Map(S^1, X)_1$ is a Fr\'echet manifold.
\end{proposition}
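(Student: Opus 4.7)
The plan is to reduce the claim to one about iterated fibre products of Fréchet manifolds, and then apply Lemma~\ref{lemma:submersive_functor_gives_submersion} together with the preceding proposition. First, since the definition of Fréchet manifold in this paper does not require second-countability, a small coproduct of Fréchet manifolds is again a Fréchet manifold, so it suffices to prove each component indexed by a pair $(V_1, V_2) \in C(S^1)_\text{min}\times C(S^1)_\text{min}$ of the displayed coproduct is a Fréchet manifold.

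Each such component is the iterated fibre product
\[
\inhom{\vC(V_1)}{X} \times_{\inhom{\vC(V_{12})}{X}} \inhom{\vC(V_{12})}{(X^\mathbbm{2})} \times_{\inhom{\vC(V_{12})}{X}} \inhom{\vC(V_2)}{X}.
\]
The three outer factors are Fréchet manifolds by the preceding proposition: the spaces $\inhom{\vC(V_i)}{X}$ directly, and $\inhom{\vC(V_{12})}{(X^\mathbbm{2})}$ by applying the same proposition with $X$ replaced by the arrow groupoid $X^\mathbbm{2}$, which is itself a Lie groupoid (its structure maps are built from those of $X$, and hence the source and target maps of $X^\mathbbm{2}$ are submersions). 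The middle $\inhom{\vC(V_{12})}{X}$ is likewise a Fréchet manifold.

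Next I would verify that each of the four structure maps going into the middle $\inhom{\vC(V_{12})}{X}$ is a submersion. The two maps from $\inhom{\vC(V_{12})}{(X^\mathbbm{2})}$ are induced by postcomposition with the functors $S, T \colon X^\mathbbm{2} \to X$; these functors are submersive (on objects they are the source and target of $X$, which are submersions by hypothesis, and on arrows they are projections from pullback squares of arrows in $X$, again submersions), so Lemma~\ref{lemma:submersive_functor_gives_submersion}(1) gives that the induced maps are submersions. The two maps $\inhom{\vC(V_i)}{X} \to \inhom{\vC(V_{12})}{X}$ are induced by precomposition with the canonical refinement functors $\check C(V_{12}) \to \check C(V_i)$, and Lemma~\ref{lemma:submersive_functor_gives_submersion}(2) shows these are submersions as well.

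Finally, since pullbacks of submersions between Fréchet manifolds exist as Fréchet manifolds, with the projection again a submersion, we can build the iterated fibre product in two steps, pulling back first along the map from $\inhom{\vC(V_{12})}{(X^\mathbbm{2})}$ and then along the map from $\inhom{\vC(V_2)}{X}$. At each stage we land in the Fréchet category, so the component is a Fréchet manifold as required. The main conceptual obstacle here is simply the bookkeeping of checking that $X^\mathbbm{2}$ really is a Lie groupoid with submersive structure functors $S, T$ so that Lemma~\ref{lemma:submersive_functor_gives_submersion}(1) applies; once this is in hand, the proof reduces to the same pullback-of-submersions argument already used for $\Map(S^1,X)_0$.
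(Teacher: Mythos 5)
Your argument is correct and follows essentially the same route as the paper: decompose the arrow space into components, observe each is an iterated fibre product whose factors are Fréchet manifolds by the object-space result (applied also to the Lie groupoid $X^\mathbbm{2}$), and use Lemma~\ref{lemma:submersive_functor_gives_submersion} parts (1) and (2) to see the maps into the middle factor $\inhom{\vC(V_{12})}{X}$ are submersions, so the pullbacks exist in the Fréchet category. Your explicit check that $X^\mathbbm{2}$ is a Lie groupoid with submersive $S,T$ is a detail the paper leaves implicit, but it is the same proof.
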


Now happily, the source and target map for our Fr\'echet--Lie groupoid
are given, on each component of the arrow Fr\'echet manifold, by
the two projections
\[
		\inhom{\vC(V_1)}{X} \times_{\inhom{\vC(V_{12})}{X}} \inhom{\vC(V_{12})}{(X^\mathbbm{2})} \times_{\inhom{\vC(V_{12})}{X}} \inhom{\vC(V_2)}{X} \to \inhom{\vC(V_i)}{X}
\]
where $i=1,2$, which are submersions. Therefore

\begin{theorem}\label{thm:mapping-groupoid-is-Frechet-Lie}
For $X$ a Lie groupoid, $\Map(S^1, X)$ is a Fr\'echet--Lie groupoid.
\end{theorem}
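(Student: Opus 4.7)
The proof is essentially an assembly of what has been built up through the section. The two propositions immediately preceding the theorem give that $\Map(S^1,X)_0$ and $\Map(S^1,X)_1$ are Fr\'echet manifolds, and the paragraph just above the theorem identifies the source and target on each component of the coproduct with one of the canonical projections out of an iterated fibre product of submersions of Fr\'echet manifolds. Since being a submersion is local in the codomain and the coproduct decomposition of $\Map(S^1,X)_0$ (and hence of $\Map(S^1,X)_1$) is by open-and-closed pieces, $s$ and $t$ are globally submersions. What remains to verify to establish a groupoid object in $\cF$ is smoothness of the unit, inversion, and composition maps.

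For this I would first invoke Section~\ref{section:diffeological_groupoid}: $\Map(S^1,X)$ is already a groupoid in $\cD$, so the structural maps exist and satisfy the groupoid axioms; one only needs to upgrade their smoothness from the diffeological sense to the Fr\'echet manifold sense. The unit map, on the component indexed by $V\in C(S^1)_{\text{min}}$, factors through the component of the arrow space indexed by $(V,V)$ with refinement $V$ itself, and is the map on sections induced by the unit functor $X\to X^{\mathbbm 2}$; smoothness is then immediate from the fact that postcomposition with a smooth map of finite-dimensional manifolds gives a smooth map of Fr\'echet section spaces (the standard $\Omega$-lemma, or equivalently Lemma~\ref{lemma:submersive_functor_gives_submersion}(1) applied with submersivity dropped to mere smoothness). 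Inversion is similarly induced by the smooth inverse functor $X^{\mathbbm 2}\to X^{\mathbbm 2}$.

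The main obstacle is composition, which involves passing to a common refinement of three minimal covers. Given components indexed by $(V_1,V_2)$ and $(V_2,V_3)$, one picks the canonical minimal common refinement $V_{123}$ of $V_1$, $V_2$, $V_3$ (available precisely because we restricted to $C(S^1)_{\text{min}}$); composition of transformations is then produced by first restricting the three $\inhom{\vC(V_i)}{X}$ pieces and the two $\inhom{\vC(V_{ij})}{X^{\mathbbm 2}}$ pieces along $V_{123}\to V_{ij}$ and $V_{123}\to V_i$, and then composing in $X^{\mathbbm 2}$ componentwise. The restriction maps are smooth (indeed submersions) by Lemma~\ref{lemma:submersive_functor_gives_submersion}(2), and postcomposition with the smooth composition functor $X^{\mathbbm 2}\times_X X^{\mathbbm 2}\to X^{\mathbbm 2}$ is smooth on the relevant mapping spaces by Lemma~\ref{lemma:submersive_functor_gives_submersion}(1). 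Chasing through the coproduct, composition defines a smooth map
\[
\Map(S^1,X)_1 \times_{\Map(S^1,X)_0} \Map(S^1,X)_1 \longrightarrow \Map(S^1,X)_1,
\]
and we land in the component indexed by $(V_1,V_3)$ as required.

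Combining these pieces with the already-verified groupoid axioms from Section~\ref{section:diffeological_groupoid} yields the claim. The only real subtlety is the bookkeeping of which component of the coproduct receives the image of composition, and this is handled cleanly by the canonical minimal-refinement operation on $C(S^1)_{\text{min}}$; everything else reduces to the smoothness principles already encoded in Theorem~\ref{Staceys_thm}, Proposition~\ref{restriction_is_subm}, and Lemma~\ref{lemma:map_of_wide_pullbacks}.
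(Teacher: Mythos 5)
Your proposal is correct and follows essentially the same route as the paper: the paper's proof of Theorem~\ref{thm:mapping-groupoid-is-Frechet-Lie} consists precisely of the two preceding propositions (object and arrow spaces are Fr\'echet manifolds) together with the observation that source and target are, componentwise, projections out of iterated pullbacks of submersions and hence submersions. Your additional verification that unit, inversion and composition are Fr\'echet-smooth (via pre- and post-composition and the chosen minimal refinements) is material the paper leaves implicit, deferring to the diffeological discussion in Section~\ref{section:diffeological_groupoid}, and is consistent with the lemmas of the section.
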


Observe that $\LL X$ is built by taking disjoint unions of pullbacks of smooth path spaces, and smooth path spaces are metrisable and smoothly paracompact (as they are nuclear Fr\'echet spaces).
By a combination of Lemma~27.9 and the comments in \S27.11 of \cite{KM97}, the pullback $M_1 \times_N M_2$, where $M_1,M_2$ are metrisable smoothly paracompect and where at least one of $M_i \to N$ is a submersion, is smoothly paracompact.
Thus by induction the iterated pullback that defines $\inhom{\cech{V}}{X}$ is a smoothly paracompact manifold, and so the object and arrow manifolds of $\LL X$ are smoothly paracompact.
This means that every open cover admits subordinate \emph{smooth} partitions of unity, and so any geometric constructions with smooth objects (differential forms and so on) can be built locally.

In fact the spaces $\LL X_n$ of sequences of $n$ composable arrows are also paracompact, so that $\LL X$ is a paracompact groupoid in the terminology of Gepner--Henriques.
As a result we know that the fat geometric realisation $||\LL X||$ of the nerve of $\LL X$ is a paracompact space \cite[Lemma~2.25]{Gepner-Henriques_07}.

The following Proposition means that the endo-2-functor on stacks on $\cM$ lifts to a 2-functor on \emph{presentations} of stacks.
It is thus a kind of rigidification of the loop stack functor.

\begin{proposition}

	The assignment $X\mapsto \LL X$ extends to a 2-functor 
	\[\LL\colon \Gpd(\cM)\to \Gpd(\cF).\]

\end{proposition}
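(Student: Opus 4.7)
The plan is to define $\LL$ on $1$-cells and $2$-cells by postcomposition and whiskering, respectively, and then verify strict $2$-functoriality, with the smoothness of the induced Fr\'echet maps being the only substantive content. For a smooth functor $\phi\colon X\to Y$ of Lie groupoids, define $\LL\phi\colon\LL X\to\LL Y$ componentwise with respect to the coproduct decomposition indexed by $V\in C(S^1)_{\min}$. On objects, send $f\colon\cech{V}\to X$ to $\phi\circ f\colon\cech{V}\to Y$; on arrows, send a triple $(f,\alpha,g)$ (where $\alpha$ is a functor $\cech{V_{12}}\to X^{\mathbbm{2}}$) to $(\phi\circ f,\phi_\ast\alpha,\phi\circ g)$, where $\phi_\ast$ denotes postcomposition with the induced functor on arrow groupoids $X^{\mathbbm{2}}\to Y^{\mathbbm{2}}$.

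The main technical point is that $\LL\phi$ is smooth on each Fr\'echet component. Recall that $\inhom{\cech{V}}{X}$ was built in Section~\ref{sec:Frechet} as an iterated pullback of the mapping spaces $X_0^{I_i}$ and $X_1^{J_i}$, and the map $\inhom{\cech{V}}{X}\to\inhom{\cech{V}}{Y}$ induced by $\phi$ is the limit of the componentwise postcomposition maps $X_k^{K}\to Y_k^{K}$ for $k=0,1$. Postcomposition with a smooth map between finite-dimensional manifolds gives a smooth map of Fr\'echet mapping spaces (standard in Bastiani calculus, and implicit in the proof of Theorem~\ref{Staceys_thm}), so smoothness of $\LL\phi$ on $\LL X_0$ follows from the universal property of iterated pullbacks. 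The same argument handles the iterated pullback describing $\LL X_1$.

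For $2$-cells, given a smooth natural transformation $\eta\colon\phi\Rightarrow\psi$, define $\LL\eta\colon\LL\phi\Rightarrow\LL\psi$ componentwise: at $f\in\inhom{\cech{V}}{X}\subset\LL X_0$, take the arrow of $\LL Y$ given by the triple $(\phi\circ f,\eta\ast f,\psi\circ f)$ supported on $V_1=V_2=V_{12}=V$, where $\eta\ast f\colon\cech{V}\to Y^{\mathbbm{2}}$ is the whiskering, regarded as an element of $\inhom{\cech{V}}{Y^{\mathbbm{2}}}$. Smoothness of the resulting map $\LL X_0\to\LL Y_1$ follows by the same iterated pullback reasoning, and naturality of $\LL\eta$ with respect to arrows of $\LL X$ follows from the interchange law in $Y$.

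The remaining $2$-functor axioms---strict preservation of identity $1$-cells, strict functoriality on composition of $1$-cells, and compatibility with horizontal and vertical composition of $2$-cells---hold on the nose because postcomposition and whiskering are themselves strictly associative and unital. The only obstacle is thus the smoothness of the various induced Fr\'echet maps, and this reduces, as above, to smoothness of postcomposition on mapping spaces combined with the universal property of the iterated pullbacks from Section~\ref{sec:Frechet}.
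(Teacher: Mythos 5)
Your proposal is correct and follows essentially the same route as the paper: $\LL$ is defined on $1$-cells by postcomposition and on $2$-cells by whiskering, the component of $\LL\eta$ at $f$ is identified with an element of $\inhom{\cech{V}}{(Y^{\mathbbm{2}})}$ sitting in the summand with $V_1=V_2=V_{12}=V$, and smoothness is reduced to postcomposition maps on the iterated-pullback description of $\inhom{\cech{V}}{X}$. The only cosmetic difference is that you justify naturality via the interchange law in $Y$ where the paper appeals to the bicategory structure on groupoids and anafunctors; both are adequate.
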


\begin{proof}

Given a functor $f\colon X\to Y$ between Lie groupoids, we clearly get a functor $\LL f\colon \LL X \to \LL Y$ between Fr\'echet--Lie groupoids, by composing everything in sight with $f$.
Moreover, given a second functor $k\colon Y\to Z$, we clearly have $\LL(kf) = \LL k\, \LL f$.

Assume now that we have a natural transformation $\alpha\colon f\Rightarrow g\colon X\to Y$, or in other words a functor $X \to Y^\mathbbm{2}$. 
We need to show that this induces a natural transformation $\LL f \Rightarrow \LL g$, which is determined by the data of a smooth map
\[
	\Map(S^1,X)_0 \to \Map(S^1,Y)_1.
\]
We first need to describe this map on the level of underlying sets.
Let $S^1 \leftarrow \vC(V) \xrightarrow{h} X$ be an anafunctor. 
The value of the natural transformation $\LL\alpha \colon \LL f \Rightarrow \LL g$ at $h$ is a transformation of anafunctors
\[
	 \LL\alpha(h) \colon (\vC(V) \xrightarrow{fh} Y) \Rightarrow (\vC(V) \xrightarrow{gh} Y)
\]
and so lives in the component
\[
	\inhom{\vC(V)}{Y} \times_{\inhom{\vC(V)}{Y}} \inhom{\vC(V)}{(Y^\mathbbm{2})} \times_{\inhom{\vC(V)}{Y}} \inhom{\vC(V)}{Y}
	\simeq \inhom{\vC(V)}{(Y^\mathbbm{2})}
\]
Moreover, the transformation $\LL\alpha(h)$ is simply the left whiskering of $\alpha$ by the functor $h$. 
Thus $\LL\alpha$ is given (on one component) by the map
\[
	\inhom{\vC(V)}{X} \to \inhom{\vC(V)}{(Y^\mathbbm{2})},
\]
induced by composition with the given $X \to Y^\mathbbm{2}$, hence the component map of the natural transformation $\LL f\Rightarrow \LL g$ is smooth.

Now it remains to show firstly that $\LL\alpha$ is natural, and secondly that this is functorial for both compositions of 2-cells.
Naturality follows from the proof that anafunctors are 1-cells in a bicategory, and that functors are 1-cells in the locally full sub-bicategory $\Gpd(\cM)$.
Functoriality follows from the fact whiskering is a functorial process.
\end{proof}

\begin{remark}

	The 2-functor $\LL\colon \Gpd(\cM)\to \Gpd(\cF)$ preserves products up to weak equivalence. 
	This follows formally using the equivalence between differentiable stacks and Lie groupoids and anafunctors, and the fact that the product of differentiable stacks is presented by the product of Lie groupods.
	However we actually have a slightly more rigid result, with the coherence functor (in one direction) being the canonical inclusion
	\[
		\LL (X\times Y) \into \LL X \times \LL Y,
	\]
	rather than some comparison anafunctor.
	This has a quasi-inverse \emph{functor} that takes a pair of objects, in summands indexed by the covers $V_1$ and $V_2$ respectively, to the isomorphic pair indexed by the same cover $V_{12}$, the chosen common refinement of $V_1$ and $V_2$.

\end{remark}

Now the construction of the map $q$ from section \ref{sec:diffeological_presentation} is identical, we need to additionally show that it is a submersion.
There is a small subtlety here, in that we haven't been able to show directly that $q$ is a representable map of stacks, rather we will rely on (a submersion variant of) the weaker notion of presentation from \cite[\S 6.2.0.1]{Pronk_96}, which only requires that the comma object of $q$ with itself gives a submersion between manifolds.
Since we know the comma object $q\downarrow q$ is already a manifold, namely the arrow space $\Map(S^1,X)_1$, and the projections are the source and target maps, which are submersions, then we have our first main result.

\begin{theorem}\label{thm:hom-stack-weakly-differentiable}

	For $X$ a Lie groupoid, the Hom-stack $\cHom(S^1,\cX)$ is weakly presented by the Fr\'echet--Lie groupoid $\Map(S^1,X)$.

\end{theorem}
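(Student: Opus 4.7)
The plan is to leverage the computation already carried out in Section \ref{sec:diffeological_presentation} and simply upgrade it to the Fr\'echet--Lie setting. The map $q\colon \Map(S^1,X)_0 \to \cHom(S^1,\cX)$ and its definition (via the evaluation anafunctors $S^1\times\inhom{\vC(V)}{X} \leftarrow \vC(V)\times\inhom{\vC(V)}{X} \to X$ on each summand) are identical to those from the diffeological construction; only now the domain $\Map(S^1,X)_0$ is a \emph{Fr\'echet} manifold by the results preceding Theorem~\ref{thm:mapping-groupoid-is-Frechet-Lie}. Similarly, the proof of Proposition~\ref{prop:presentation_map_an_epi}, which constructs local sections of $q$ over an open cover of any $\mathbb{R}^n$, is insensitive to which ambient category of smooth spaces we work in, so $q$ remains an epimorphism of stacks.

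The main content of the proof is then to identify the comma object $q\downarrow_{\cHom(S^1,\cX)} q$ with the arrow space $\Map(S^1,X)_1$, and to verify the submersivity condition on the comma projections. For the first point, the canonical comparison map $c$ displayed at the end of Section~\ref{sec:diffeological_presentation} was shown (on $\mathbb{R}^n$-points, for each choice of $V_1,V_2\in C(S^1)_{\text{min}}$) to be a bijection by unwinding the description of transformations of anafunctors. This argument is set-theoretic and diagrammatic, and so it transfers without change: the comma object is represented as a sheaf on $\cM$ by the Fr\'echet manifold $\Map(S^1,X)_1$ (which we know to exist after Theorem~\ref{thm:mapping-groupoid-is-Frechet-Lie}).

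For the submersivity condition, I would note that the two comma projections $q\downarrow q \to \Map(S^1,X)_0$ are precisely the source and target maps of the Fr\'echet--Lie groupoid $\Map(S^1,X)$, which were shown to be submersions in Section~\ref{sec:Frechet} (immediately before the statement of Theorem~\ref{thm:mapping-groupoid-is-Frechet-Lie}, as the component-wise projections off the iterated fibre products defining $\Map(S^1,X)_1$). Together with the epimorphism property of $q$, this satisfies the definition of weak presentation recalled earlier in the paper.

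The main conceptual obstacle here is the one already flagged by the authors: we cannot prove that $q$ is \emph{representable} as a map of stacks on $\cM$, because for an arbitrary map from a finite-dimensional manifold $M \to \cHom(S^1,\cX)$ the associated comma object is a priori only a diffeological space, and we have no direct means of presenting it as a Fr\'echet manifold (the analogue of Lemma~\ref{lemma:representable_maps_local_on_target} is not available in the Fr\'echet setting). The weak presentation notion from \cite[\S 6.2.0.1]{Pronk_96} is expressly designed to bypass this obstruction by requiring only that the self-comma $q\downarrow q$, rather than all comma objects $M \downarrow q$, be representable with submersive projections -- and this we have in hand.
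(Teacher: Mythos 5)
Your proposal is correct and follows essentially the same route as the paper: reuse the map $q$ and the comma-object computation from the diffeological section, observe that $q\downarrow q$ is the arrow space $\Map(S^1,X)_1$, now known to be a Fr\'echet manifold with source and target submersions, and invoke the weak presentation notion of Pronk to sidestep the unavailable representability of $q$. The paper's own argument is just a more compressed version of exactly this reasoning, including the same remark about why full representability is out of reach.
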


As the stack $\cHom(M,\cX)$ is presented by a paracompact groupoid it is well-behaved homotopically.
Proposition~8.5 in \cite{Noohi_12} ensures that since $\LL X$ has object and arrow manifolds metrisable, $\cHom(M,\cX)$ has a \emph{hoparacompact} underlying topological stack.
Then the classifying space of $\cHom(M,\cX)$ (as defined in \cite{Noohi_12}) is well-defined up to homotopy equivalence, rather than weak homotopy equivalence.

We note that with minor modifications, one can repeat the above analysis for the case of the mapping stack $\cHom([0,1],\cX)$, but we leave that to the interested reader.

\section{Recap on differentiable gerbes}
\label{sec:gerbes}

\begin{definition}
A (Fr\'echet-)Lie groupoid $X\to M$ is a \emph{gerbe} if $\pi\colon X_0\to M$ and $(s,t)\colon X_1\to X_0^{[2]}$ are surjective submersions. 
The stack on $\cM$ that such a groupoid (weakly) presents will be called a (Fr\'echet-)differentiable gerbe.
\end{definition}

Equivalently, we can require that $X \to M$ and $X \to \vC(X_0)$ are submersive functors that are surjective on objects and arrows. We rephrase these properties in terms of functors rather than component maps because later we wish to prove stability of these properties under forming mapping groupoids.

\begin{remark}
In this section the results also apply to general Fr\'echet--Lie groupoids, even though we have only stated them for Lie groupoids for brevity.
\end{remark}

Because $(s,t)$ is a submersion the pullback $\Lambda X := \Delta^* X_1 \to X_0$, for $\Delta\colon X_0 \to X_0^{[2]}$ the diagonal, is a bundle of Lie groups. We shall call $\Lambda X$ the \emph{inertia bundle}. If the fibre $\Lambda X_x \simeq G$ for every $x\in X_0$ then this gives a \emph{$G$-gerbe} in the sense of \cite{LGSX}, that is, an extension of groupoids
\[
	\Lambda X \to X \to \vC(X_0).
\]
However, we wish to use a mental picture as close to bundle gerbes \cite{Mur} as possible, so offer the following diagram 
encoding a gerbe $X\to M$:
\[
	\xymatrix{
		X_1 \ar[d] & \Lambda X \ar[d]\\
		X_0^{[2]} \ar@<1ex>[r] \ar@<-1ex>[r] & X_0 \ar[d] \\
		& M
	}
\]
We have left and right actions of $\Lambda X$ on $X_1$, or rather a left action of $\Lambda X_L :=\pr_1^* \Lambda X$ and a right action of $\Lambda X_R :=\pr_2^* \Lambda X$ on $X_1$, preserving the fibres of $(s,t)$, by composition in the groupoid $X$. 
This makes $X_1 \to X_0^{[2]}$ a principal $\Lambda X_L$-$\Lambda X_R$-bibundle. 
Notice that $X_1$ is locally isomorphic to $\Lambda X_L$ and to $\Lambda X_R$ (as spaces over $X_0^{[2]}$) using local sections of $(s,t)$.

There is also an action of $X$ as a groupoid on the family $\Lambda X \to X_0$, covering the action of $X$ on $X_0$. 
This is by conjugation in the groupoid: if $f\colon x\to y \in X_1$ and $\alpha \in \Lambda X_x$, then $f^{-1}\alpha f \in \Lambda X_y$, where we are using the diagrammatic (or algebraic) composition order. 
This defines a smooth map
\[
	\Lambda X \times_{X_0,s} X_1  \to \Lambda X
\]
over $X_0$, using the target map composed with the second projection on the domain. We also want to think of this in the equivalent form of
\[
	 \Lambda X_L \times_{X_0^{[2]}} X_1   \to \Lambda X_R,
\]
a map over $X_0^{[2]}$. 
This action defines an action groupoid $\Lambda X /\!/ X$ with objects $\Lambda X$ and morphisms $\Lambda X_L \times_{X_0^{[2]}} X_1$. 
This groupoid will become important for calculations in the next section. 
We will denote an object and an arrow of $\Lambda X /\!/ X$ by
\[
	\xymatrix{x \ar@(ul,dl)_{\alpha} } \qquad \text{and} \qquad \xymatrix{
		x \ar@(ul,dl)_{\alpha}   \ar[r]^f &  y
	}\;,
\]
respectively. 
The action of $X$ on $\Lambda X$, that is, the target map of $\Lambda X /\!/ X$, is 
\begin{equation}\label{eq:action_of_X_on_LambdaX}
	\xymatrix{
			x \ar@(ul,dl)_{\alpha}   \ar[r]^f &  y 
	}\qquad \longmapsto \quad
	\xymatrix{
		y \ar@(ul,dl)_{f^{-1}\alpha f}
	}
\end{equation}

For the purposes of being confident that various pullbacks exist in what follows, we record some trivial consequences of the conditions on the definition of a gerbe. 
Note that the surjectivity requirements are superfluous at this point, but will become important later.

\begin{lemma}\label{lemma:some_standard_submersive_functors}
For a Lie groupoid $X$ with submersive functors $X\to M$ and $X\to \vC(X_0)$, the following functors are also submersive:
\begin{enumerate}
\item $(S,T)\colon X^{\mathbbm{2}} \to X\times_MX$
\item $\pr_i\colon X\times_MX \to X$ for $i=1,2$
\item $S,T\colon X^{\mathbbm{2}} \to X$
\end{enumerate}
\end{lemma}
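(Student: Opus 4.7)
The plan is to prove the three claims in order, with part (3) following almost formally from parts (1) and (2). I will verify in each case that both the object and arrow components of the relevant functor are submersions, using only the ambient Lie groupoid axioms (source/target submersive) together with the two extra submersivity conditions that define a gerbe.

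I will begin with part (2), which is the easiest. The map on objects $X_0 \times_M X_0 \to X_0$ is just a pullback of the submersion $\pi\colon X_0 \to M$, so is a submersion. On arrows, the map $\pr_i\colon X_1\times_M X_1 \to X_1$ is a pullback of the composite $\pi\circ s\colon X_1 \to M$, which is a submersion as a composite of two submersions (the source map $s$ and $\pi$). Hence $\pr_i\colon X\times_M X \to X$ is a submersive functor for $i=1,2$.

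For part (1), the object component of $(S,T)$ is precisely the map $(s,t)\colon X_1\to X_0^{[2]}=X_0\times_M X_0$, which is a submersion by the very definition of gerbe. The main point is to describe the arrow component concretely. A morphism in $X^{\mathbbm{2}}$ from $f$ to $g$ is a natural transformation $(\alpha,\beta)$ with $\beta f = g\alpha$; since $X$ is a groupoid, $g$ is determined by $g=\beta f\alpha^{-1}$, so $(X^{\mathbbm{2}})_1$ is identified with the set of triples $(f,\alpha,\beta)\in X_1^3$ with $s(\alpha)=s(f)$ and $s(\beta)=t(f)$. This identification realises $(X^{\mathbbm{2}})_1$ as the fibre product
\[
	(X^{\mathbbm{2}})_1 \;\cong\; (X_1\times_M X_1)\times_{X_0^{[2]}} X_1,
\]
where $X_1\times_M X_1 \to X_0^{[2]}$ sends $(\alpha,\beta)$ to $(s(\alpha),s(\beta))$ and $X_1\to X_0^{[2]}$ is $(s,t)$. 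Since $(s,t)$ is a submersion (gerbe condition) this pullback exists as a Fr\'echet manifold, and the first projection is a submersion. But that first projection is exactly the arrow component of $(S,T)$, so $(S,T)$ is submersive.

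For part (3), the functors $S$ and $T$ factor as $S=\pr_1\circ(S,T)$ and $T=\pr_2\circ(S,T)$; since composition of submersions is a submersion, parts (1) and (2) give the result on both object and arrow components simultaneously. The only mildly delicate step is the identification of $(X^{\mathbbm{2}})_1$ as a pullback in part (1); I expect no other obstacle, as the smooth structure and submersivity of all relevant fibre products follow from the gerbe hypothesis plus the fact that $s,t\colon X_1\to X_0$ are submersions in any Lie groupoid.
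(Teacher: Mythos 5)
Your proof is correct. The paper itself gives no argument for this lemma (it is introduced as recording ``trivial consequences'' of the hypotheses), so there is nothing to compare against; your identification of $(X^{\mathbbm{2}})_1$ as the fibre product $(X_1\times_M X_1)\times_{X_0^{[2]}}X_1$ over $(s,s)$ and $(s,t)$ is exactly the description the authors use later in Lemma~\ref{lemma:inertia_bundle_pullback_of_arrow_groupoid}, and you invoke the hypothesis that $(s,t)$ is a submersion at precisely the point where it is needed (to make the projection onto $X_1\times_M X_1$ a pullback of a submersion).
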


While there may be some utility in maintaining extra generality at this point, our results will ultimately be applied in the case that $\Lambda X$ is a bundle of \emph{abelian} Lie groups. 
Thus from now on we make this assumption.
Note however that an \emph{abelian gerbe} in the sense of \cite[Definition 2.9]{Breen} is more restrictive than simply demanding $\Lambda X_x$ is abelian for every $x\in X_0$. 
We will get to this type of gerbe soon (see Definition \ref{def:abelian_gerbe} below)

We are also interested primarily in the case that $\Lambda X\to X_0$ descends to $M$. 
This means that there is an isomorphism
\[
	\phi\colon \Lambda X_L \xrightarrow{\sim} \Lambda X_R 
\]
over $X_0^{[2]}$ which satisfies the cocycle condition over $X_0^{[3]}$. We will refer to $\phi$ as the \emph{descent isomorphism} for $\Lambda X$.
We can denote this isomorphism by
\[
	\raisebox{4ex}{
		\xymatrix{
			x \ar@(ul,dl)_{\alpha}  \\ y
		}
	}\qquad \stackrel{\simeq}{\longmapsto} \quad
	\raisebox{4ex}{
		\xymatrix{
			 x  \\ y \ar@(ul,dl)_{\phi(\alpha)}
		}
	}
\]
where $(x,y) \in X_0\times_M X_0$.
There is then a bundle of groups $\mathcal{A}\to M$ such that $\pi^* \mathcal{A} \simeq \Lambda X$. 
Another way to phrase this is that there is an action $\Lambda X \times_{X_0,\pr_1} X_0^{[2]} = \Lambda X_L \to \Lambda X$ of the groupoid $\vC(X_0)$ on $\Lambda X$, and hence we have an action groupoid $\Lambda X/\!/ \vC(X_0)$ with arrows $\Lambda X_L$. 
This has a projection map to $\vC(X_0)$ making $\Lambda X/\!/ \vC(X_0) \to \vC(X_0)$ a bundle of groups object in the category of Lie groupoids.

\begin{lemma}\label{lemma:A-gerbe}
If $\Lambda X$ descends to $\mathcal{A}$ on $M$, the following square is a pullback of Lie groupoids
\[
	\xymatrix{
		\Lambda X/\!/ \vC(X_0) \ar[r] \ar[d] & \mathcal{A} \ar[d] \\
		\vC(X_0) \ar[r] & M \,.
	}
\]
\end{lemma}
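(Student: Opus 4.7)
The plan is to verify the square is a pullback of Lie groupoids by checking it is a pullback separately at the level of object manifolds and at the level of arrow manifolds, and that the structure maps of $\Lambda X/\!/\vC(X_{0})$ are the ones induced on the pullback. I view $M$ and $\mathcal{A}$ as Lie groupoids with only identity arrows, so that the strict pullback in Lie groupoids reduces to level-wise pullbacks of manifolds together with compatibility of source, target, identity, and composition.

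At the object level, the descent hypothesis is by definition the statement that $\Lambda X \simeq \pi^{*}\mathcal{A} = X_{0}\times_{M}\mathcal{A}$, exhibiting $\Lambda X$ as the pullback of $X_{0}\xrightarrow{\pi}M\leftarrow\mathcal{A}$. At the arrow level, since both projections $X_{0}^{[2]}\st X_{0}$ compose with $\pi$ to the same canonical map $X_{0}^{[2]}\to M$, one computes
\[
    \Lambda X_{L}=\Lambda X\times_{X_{0},\pr_{1}}X_{0}^{[2]}\simeq X_{0}^{[2]}\times_{M}\mathcal{A},
\]
identifying the arrow manifold of $\Lambda X/\!/\vC(X_{0})$ with the pullback at the arrow level.

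Next I would check the structure maps under these identifications. Writing $[\alpha]\in\mathcal{A}$ for the class of $\alpha\in\Lambda X$ under the descent $\Lambda X\simeq\pi^{*}\mathcal{A}$, the source of $(\alpha,(x,y))\in\Lambda X_{L}$ is $\alpha\leftrightarrow(x,[\alpha])$, matching the pullback source; the identity arrow at $\alpha$ and the composition of arrows correspond similarly, since $\mathcal{A}$ as a discrete groupoid contributes only identity arrows and composition is governed entirely by the $\vC(X_{0})$-structure.

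The key point to verify is the target map. In $\Lambda X/\!/\vC(X_{0})$, the target of $(\alpha,(x,y))$ is the action of $(x,y)$ on $\alpha$, namely $\phi(\alpha)\in\Lambda X_{y}$, where $\phi\colon\Lambda X_{L}\to\Lambda X_{R}$ is the descent isomorphism. But $\phi$ is defined precisely so that the identification $\Lambda X\simeq\pi^{*}\mathcal{A}$ is consistent over both projections $X_{0}^{[2]}\st X_{0}$; equivalently, $[\phi(\alpha)]=[\alpha]$, so $\phi(\alpha)\leftrightarrow(y,[\alpha])$, which matches the target in the pullback. This compatibility between $\phi$ and the descent identification is essentially a reformulation of the definition of descent, so the check is immediate; it is nevertheless the only substantive step in the verification.
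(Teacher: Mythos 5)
Your proof is correct and takes essentially the same route as the paper: verify the square is a pullback separately on object manifolds (where it is exactly the descent hypothesis $\Lambda X \simeq X_0\times_M\mathcal{A}$) and on arrow manifolds (where it is a rearrangement of an iterated pullback). The only difference is that you make explicit the compatibility of the target map with the descent isomorphism $\phi$ (i.e.\ $[\phi(\alpha)]=[\alpha]$), a point the paper leaves implicit in asserting that the induced comparison map is ``the canonical isomorphism''; this is a worthwhile clarification but not a different argument.
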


\begin{proof}
We can verify this by looking at the level of objects and arrows, individually. 
The object manifold of $\Lambda X/\!/ \vC(X_0)$ is $\Lambda X$, and by assumption this is isomorphic to $X_0\times_M \mathcal{A}$, as needed.
The square at the level of arrow manifolds is 
\[
	\xymatrix{
		\Lambda X \times_{X_0,\pr_1} (X_0\times_M X_0) \ar[r] \ar[d] & \mathcal{A} \ar[d]\\
		X_0\times_M X_0 \ar[r] & M
	}
\]
and so we need to show that the induced map 
\begin{equation}\label{eq:bundle of groups descent comparison on arrows}
	\Lambda X \times_{X_0,\pr_1} (X_0\times_M X_0) \to (X_0\times_M X_0)\times_M \mathcal{A}
\end{equation}
is an isomorphism. 
But $\Lambda X \simeq  X_0\times_M \mathcal{A}$, so (\ref{eq:bundle of groups descent comparison on arrows}) is just the canonical isomorphism rearranging the factors of a iterated pullback.
\end{proof}

We can hence talk about \emph{$\mathcal{A}$-gerbes on $M$} for a fixed bundle of abelian groups $\mathcal{A}\to M$, and we will restrict attention to this case from now on. The bundle $\mathcal{A}$ will be referred to as the \emph{structure group bundle}.

To go further and talk about \emph{abelian} $\mathcal{A}$-gerbes we need to say what it means for the left and right actions of $\Lambda X_L$ and $\Lambda X_R$ on $X_1$ to agree. In the special case that $\Lambda X = X_0\times A$, then $\Lambda X_L = X_0^{[2]} \times A = \Lambda X_R$, and we could ask that the $A$-$A$-bibundle $X_1$ is in fact just an $A$-bundle, with the right action equal to the left action.

In the case that $\Lambda X = \pi^*\mathcal{A}$ is non-trivial, the best we can do is identify $\Lambda X_L$ with $\Lambda X_R$ via the given descent isomorphism, and ask that \emph{relative to this identification}, the left and right actions agree. There are two ways to look at this agreement, from the point of view of the actions of $\Lambda X_L$, $\Lambda X_R$ on $X_1$, or the action of $X$ on $\Lambda X$. 
However, we want to also introduce a third way, that uses a more global, groupoid-based approach to be used in the next section.

Recall that $X^\mathbbm{2}$ is the arrows of a groupoid object in Lie groupoids---that is, a double groupoid. 
There is a groupoid action \emph{in the category of Lie groupoids}
\[
	\Lambda X/\!/ X  \times_{X,S} X^\mathbbm{2} \to \Lambda X/\!/ X 
\]
with the object component of this functor given by equation (\ref{eq:action_of_X_on_LambdaX}).
The arrow component is given by
\[
	\left(
	\xymatrix{
		x \ar@(ul,dl)_{\alpha}  \ar[r]^g & y }
		\;,\;
	\raisebox{4ex}{\xymatrix{
		x \ar[r]^g \ar[d]_f & \ar[d] y \\
		 z \ar[r]_h & w
	} 
	}\right)
	\qquad \longmapsto \quad 
	\xymatrix{
		z \ar@(ul,dl)_{f^{-1}\alpha f}   \ar[r]^h & w
	}
\]
We remind the reader that here notation for the conjugation action is using the diagrammatic order for composition.

While this seems to iterate our data to another level of complexity, this allows us to consider stability of structures under the functor 
\[
	(-)^{\vC(V)}\colon \Gpd(\cM) \to \cF. 
\]	
In particular, since $(-)^{\vC(V)}$ preserves products and even pullbacks of submersive functors, for a bundle of groups $\mathcal{G} \to X$ in $\Gpd(\cM)$ (considered as a 1-category), $\mathcal{G}^{\vC(V)} \to X^{\vC(V)}$ is a bundle of Fr\'echet--Lie groups. This will allow a calculation of the structure group bundle of the (putative) gerbe $\LL X$, once we prove that it is in fact a gerbe.

\begin{lemma}\label{lemma:LR_actions_agree}

	For $X$ an $\mathcal{A}$-gerbe, the following are equivalent:
	\begin{enumerate}
		\item The diagram
		\[
			\xymatrix{
				\Lambda X_L  \times_{X_0^{[2]}} X_1 
				\ar[r]^-{\simeq} \ar[dd]_{\phi\times \id_{X_1}} & 
				\Lambda X \times_{X_0,s} X_1 \ar[dr] \\
				&& X_1\\
				\Lambda X_R \times_{X_0^{[2]}} X_1 
				\ar[r]^-{\simeq} &
				X_1 \times_{t,X_0} \Lambda X \ar[ur]
			}
		\]
		sitting over $X_0^{[2]}$ commutes (``the right and left actions of $\Lambda X$ on $X_1$ agree'');

		\item The conjugation action of $X$ on $\Lambda X$ factors through the action of $\vC(X_0)$ on $\Lambda X$, via the projection $X\to \vC(X_0)$;
		
		\item The action of $X^\mathbbm{2}$ on $\Lambda X/\!/ X$ factors through an action 
		\begin{equation}\label{eq:double_Cech_action}
			\Lambda X/\!/ X  \times_{X,\pr_1} (X\times_M X) \to \Lambda X/\!/ X, 
		\end{equation}
		of the double groupoid $X\times_M X \rightrightarrows X$ on $\Lambda X/\!/ X$, in the category of Lie groupoids, whose object component is the descent isomorphism $\phi$ for $\Lambda X$, via the functor $(S,T)\colon X^\mathbbm{2} \to X\times_M X$. 
	\end{enumerate}
\end{lemma}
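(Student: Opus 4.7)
The plan is to prove the chain (1) $\Leftrightarrow$ (2) $\Leftrightarrow$ (3) by unpacking how each condition amounts to the assertion that the conjugation map $(\alpha, f)\mapsto f^{-1}\alpha f$ on $\Lambda X\times_{X_0,s} X_1$ agrees with (the pullback of) the descent isomorphism $\phi\colon \Lambda X_L \xrightarrow{\sim} \Lambda X_R$.

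First I would establish (1) $\Leftrightarrow$ (2) by working pointwise. Fix $f\colon x\to y$ in $X_1$ and $\alpha\in \Lambda X_x$. The left action of $\alpha \in \Lambda X_L|_{(x,y)}$ on $f$ is $\alpha\cdot f$ using composition in $X$; the right action of $\phi(\alpha)\in \Lambda X_R|_{(x,y)}$ on $f$ is $f\cdot \phi(\alpha)$. The diagram in (1) commutes precisely when $\alpha\cdot f = f\cdot \phi(\alpha)$, which rearranges to $f^{-1}\alpha f = \phi(\alpha)$. This is exactly the statement that the conjugation map $\Lambda X \times_{X_0,s} X_1 \to \Lambda X$ depends only on the endpoints $(s(f), t(f))$ and equals $\phi$ there; by Lemma~\ref{lemma:A-gerbe}, this is the same as saying the conjugation action of $X$ on $\Lambda X$ factors through the $\vC(X_0)$-action.

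Next I would handle (2) $\Leftrightarrow$ (3). The object-level action of $X^\mathbbm{2}$ on $\Lambda X/\!/X$ is, by equation (\ref{eq:action_of_X_on_LambdaX}), the conjugation action of $X$ on $\Lambda X$; the arrow-level action uses the same conjugation $f^{-1}\alpha f$ applied to the vertical arrows of a square in $X^\mathbbm{2}$. Thus (2) is precisely the statement that the object component factors through $X\times_M X$ via $\phi$. Once the objects factor, the arrow component automatically does: an arrow of $X\times_M X$ remembers only the pair of source/target arrows of $X^\mathbbm{2}$, and the formula $f^{-1}\alpha f$ at the arrow level already depends solely on this endpoint data together with the input $\alpha$, which now enters via $\phi$. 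Conversely, specialising the factored $X\times_M X$-action to identity arrows recovers (2).

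The only genuine content, and hence the main obstacle, is verifying that the putative operation in (3) is a bona fide action in $\Gpd(\cM)$, i.e.\ a smooth functor $\Lambda X/\!/X \times_{X,\pr_1}(X\times_M X) \to \Lambda X/\!/X$ satisfying associativity and unitality. Smoothness follows from that of $\phi$ and the groupoid structure maps; the cocycle condition on $\phi$ over $X_0^{[3]}$ (which is the descent data expressing $\Lambda X \simeq \pi^*\mathcal{A}$) supplies associativity, and unitality is immediate from $\phi$ being the identity on the diagonal. With these formalities noted, all three conditions are manifestly reformulations of the single identity $f^{-1}\alpha f = \phi(\alpha)$, presented respectively as a bibundle compatibility, a factorisation of conjugation through $\vC(X_0)$, and a factorisation of double-groupoid actions.
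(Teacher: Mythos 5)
Your proposal is correct and follows essentially the same route as the paper's proof: both reduce (1) and (2) to the single pointwise identity $f^{-1}\alpha f = \phi(\alpha)$, obtain (3)$\Rightarrow$(2) by passing to object components, and get (2)$\Rightarrow$(3) by writing down the factored arrow component explicitly, with well-definedness/functoriality coming from that identity and the action axioms from the cocycle condition on $\phi$. (The appeal to Lemma~\ref{lemma:A-gerbe} in your first step is unnecessary but harmless.)
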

\begin{proof}
We will first prove that 1.\ and 2.\ are equivalent. The implication 3.$\Rightarrow$2.\ is immediate because 2.\ is merely the object component of 3. We will then show how 3.\ follows from 2. 

The diagram in 1.\ commuting means that for all $(\alpha,f)\in \Lambda X_L\times_{X_0^{[2]}} X_1$, $\alpha f = f\phi(\alpha)$.
In other words, that $\phi(\alpha) = f^{-1}\alpha f$, but this is precisely what it means for the action of $X$ on $\Lambda X$ to factor through the action of $\vC(X_0)$ on $\Lambda X$, and so 1.$\Leftrightarrow$2.

To prove that 2.\ implies 3., we need first to describe an action as in (\ref{eq:double_Cech_action}) with object component $\Lambda X_L = \Lambda X \times_{X_0,\pr_2}X_0^{[2]} \xrightarrow{\phi} \Lambda X_R \xrightarrow{\pr} \Lambda X$.
If 2.\ holds then 
\begin{equation}\label{eq:descent_iso_vs_conjugation}
	\raisebox{4ex}{
		\xymatrix{
			x \ar@(ul,dl)_{\alpha}  \\ y
		}
	}\qquad \stackrel{\simeq}{\longmapsto} \quad
	\raisebox{4ex}{
		\xymatrix{
			 x  \\ y \ar@(ul,dl)_{\phi(\alpha)}
		}
	} \qquad = \quad
	\raisebox{4ex}{
		\xymatrix{
			 x  \\ y \ar@(ul,dl)_{f^{-1}\alpha f}
		}
	}
\end{equation}
for any $f\colon x\to y \in X_1$.
Hence we can define the arrow component of (\ref{eq:double_Cech_action}) by
\[
	\left(
	\xymatrix{
		x \ar@(ul,dl)_{\alpha}  \ar[r]^g & y }
		\;,\;
	\raisebox{4ex}{\xymatrix{
		x \ar[r]^g  & y \\
		 z \ar[r]_h & w
	} 
	}\right)
	\qquad \longmapsto \quad 
	\xymatrix{
		z \ar@(ul,dl)_{\phi(\alpha)}   \ar[r]^h & w
	}
\]
which is indeed a functor by virtue of (\ref{eq:descent_iso_vs_conjugation}), and the fact it is an action follows from the cocycle identity for $\phi$.
The action of $X^\mathbbm{2}$ on $\Lambda X/\!/ X$ factors through this action by construction.

The arrow component of the action in 3.\ is in fact determined uniquely, rather than merely being `an' action, since 3.\ implies 2.\ and then one can construct the required arrow component of the action functor.
\end{proof}

Note that in particular that if the conditions of the lemma are satsified there is a functor $\Lambda X/\!/ X \to \Lambda X /\!/ \vC(X_0)$ sitting over $X \to \vC(X_0)$.

\begin{definition}\label{def:abelian_gerbe}
	We call an $\mathcal{A}$-gerbe $X\to M$ \emph{abelian} if the equivalent conditions of Lemma~\ref{lemma:LR_actions_agree} hold.
\end{definition}

\begin{lemma}\label{lemma:inertia_with_adj_action_pullback_from_base}
	For an abelian $\mathcal{A}$-gerbe $X\to M$, the left and hence all squares below are pullbacks of Lie groupoids
	\[
		\xymatrix{
			\Lambda X/\!/ X \ar[r] \ar[d] & \Lambda X /\!/ \vC(X_0) \ar[r] \ar[d] & \mathcal{A} \ar[d]\\
			X \ar[r] & \vC(X_0) \ar[r]& M\, .
		}
	\]
	In particular, $\Lambda X/\!/ X \to X$ is a bundle of groups object in the category of Lie groupoids.
\end{lemma}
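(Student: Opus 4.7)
The plan is to apply Lemma~\ref{lemma:A-gerbe} together with the pasting lemma for pullbacks to reduce the statement to verifying that the left-hand square is a pullback in $\Gpd(\cM)$. Indeed, since $\Lambda X$ descends to $\mathcal{A}\to M$ (as we are in the abelian $\mathcal{A}$-gerbe setting), Lemma~\ref{lemma:A-gerbe} shows that the right-hand square is a pullback. All functors in sight have submersive object and arrow components (by the gerbe hypothesis together with Lemma~\ref{lemma:some_standard_submersive_functors}), so the required iterated pullbacks exist in $\cM$, and pullbacks in $\Gpd(\cM)$ are computed separately on object and arrow manifolds.

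For the left square, the object-level square is trivially cartesian: both vertical arrows are the projection $\Lambda X \to X_0$, and both horizontal arrows are identities. At the level of arrows, the arrow manifold of $\Lambda X/\!/X$ is $\Lambda X \times_{X_0,s} X_1$, and by recording the source and target of $f$ separately as a point of $X_0^{[2]}$ this rewrites canonically as $\Lambda X_L \times_{X_0^{[2]}} X_1$. Since the arrow manifold of $\Lambda X/\!/\vC(X_0)$ is $\Lambda X_L\to X_0^{[2]}$, the arrow manifold of $X$ is $X_1$, and the bottom map on arrows is $(s,t)\colon X_1 \to X_0^{[2]}$, the right-and-bottom span pulls back to exactly $\Lambda X_L \times_{X_0^{[2]}} X_1$, matching the arrow manifold of $\Lambda X/\!/X$.

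The place where the abelian hypothesis is essential, and what I expect to be the main structural (as opposed to purely bookkeeping) step, is verifying that the candidate functor $\Lambda X/\!/X \to \Lambda X/\!/\vC(X_0)$ really is a functor of Lie groupoids making the square commute. On arrows, the image of $(\alpha,f\colon x\to y)$ has target $\phi(\alpha)$ over $y$ in $\Lambda X/\!/\vC(X_0)$, whereas the corresponding target in $\Lambda X/\!/X$ is $f^{-1}\alpha f$ over $y$; equality of these under the identity on objects is precisely condition~(2) of Lemma~\ref{lemma:LR_actions_agree}. Once this is in place, compatibility with composition follows from the cocycle identity for $\phi$.

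The final ``in particular'' assertion is then immediate from pullback stability of internal algebraic structure: $\mathcal{A}\to M$ is a bundle of groups in $\Gpd(\cM)$, with multiplication, unit and inversion all morphisms over $M$; pulling back along the submersive functor $X\to M$ transports these operations to $\Lambda X/\!/X \to X$ and preserves the defining identities, exhibiting it as a bundle of groups object in $\Gpd(\cM)$.
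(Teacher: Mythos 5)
Your proposal is correct and follows essentially the same route as the paper: invoke Lemma~\ref{lemma:A-gerbe} for the right square, reduce to the left square by the pasting lemma, and check it separately on objects (where it is trivial) and on arrows (where the pullback of $\Lambda X_L \to X_0^{[2]} \leftarrow X_1$ is manifestly the arrow manifold $\Lambda X_L\times_{X_0^{[2]}}X_1$ of $\Lambda X/\!/X$). Your extra paragraph verifying that the top horizontal functor is well defined via condition~(2) of Lemma~\ref{lemma:LR_actions_agree} is the content the paper records in the remark immediately following that lemma, so it is a welcome but not divergent addition.
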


\begin{proof}
Since $X$ is an abelian $\mathcal{A}$-gerbe, and hence an $\mathcal{A}$-gerbe, the right square is a pullback by Lemma~\ref{lemma:A-gerbe}.
By the pullback pasting lemma, the left square is a pullback if and only if the outer rectangle is a pullback; we shall prove the former. 
The object components of the top and bottom horizontal functors in the left square are identity maps $\id_{\Lambda X}$ and $\id_{X_0}$ respectively, and the left and right vertical maps are both $\Lambda X \to X_0$, hence on objects the left square is a pullback. 
The morphism components of the left square give the square
\[
	\xymatrix{
			\Lambda X_L \times_{X_0^{[2]}} X_1  \ar[rr]^{\id\times(s,t)} \ar[d]_{\pr_2} && \Lambda X_L \times_{X_0^{[2]}} X_0^{[2]} \ar[d]^{\pr_2} \\
			X_1 \ar[rr]_{(s,t)} && X_0^{[2]}\,,
		}
\]
which is manifestly a pullback.
\end{proof}

\begin{example}

	Let $A$ be an abelian Lie group. An $A$-bundle gerbe on $M$ in the sense of \cite{Mur} is an abelian $M\times A$ gerbe $X\to M$. Most often one just considers the case\footnote{%
		There is also a version of bundle gerbes where $X_1 \to X_0^{[2]}$ is a \emph{line bundle}, rather than a principal bundle. This is captured in our framework if we allow for Lie groupoids that are enriched over a monoidal category of smooth objects, in this case the category $\Lines_\mathbb{C}$ of complex lines with the usual tensor product. Asking that $X$ is enriched over $\Lines_\mathbb{C}$ in this internal setting is nothing other than asking that $X_1 \to X_0\times X_0$ is a line bundle over its image.
	} 
	of $A=U(1)$ or $\mathbb{C}^\times$.
	Note that the local triviality of the $A$-bundle $X_1 \to X_0\times_M X_0$ follows from the rest of the definition, as it is a surjective submersion, hence has local sections, and has an action by $A$ that is free and transitive on fibres. 

\end{example}

\begin{example}
	
	In \cite{HMSV_13} the second-named author and collaborators considered `bundle gerbes with non-constant structure group bundle'. This is a case intermediate between gerbes as defined here and ordinary bundle gerbes as in \cite{Mur}, requiring that $\mathcal{A}$ is a locally trivial bundle of groups, and $X_1 \to X_0^{[2]}$ is locally trivial in a way compatible with the induced local trivialisations of $\pi^* \mathcal{A}$.
	The main nontrivial example of \cite{HMSV_13} is however infinite-dimensional, meaning the results of the present paper can only be applied if we consider it as a diffeological groupoid.

\end{example}

In fact, assuming $\mathcal{A}$ is a locally trivial bundle of groups has consequences for the structure of abelian $\mathcal{A}$-gerbes.

\begin{lemma}\label{lemma:locally_trivial_A_gerbes}
	Let $\mathcal{A}\to M$ be a locally trivial bundle of abelian groups.
	Then for any abelian $\mathcal{A}$-gerbe $X\to M$, the map $(s,t)\colon X_1 \to X_0\times_M X_0$ is a locally trivial bundle.
\end{lemma}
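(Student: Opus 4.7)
The plan is to reduce local triviality of $(s,t)\colon X_1 \to X_0^{[2]}$ to the local triviality of $\Lambda X_L = \pr_1^*\pi^*\mathcal{A}$, exploiting the observation made at the start of the section that $X_1$ is locally isomorphic to $\Lambda X_L$ over $X_0^{[2]}$, via local sections of $(s,t)$ combined with the left action.

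Concretely, since $X\to M$ is a gerbe, $(s,t)$ is a surjective submersion and therefore admits smooth local sections $\sigma_\gamma\colon U_\gamma \to X_1$ on the members of some open cover $\{U_\gamma\}$ of $X_0^{[2]}$. For each such $\sigma_\gamma$, the map
\[
	\Lambda X_L|_{U_\gamma} \longrightarrow X_1|_{U_\gamma}, \qquad \alpha \longmapsto \alpha\cdot \sigma_\gamma((s,t)(\alpha)),
\]
is a diffeomorphism over $U_\gamma$, since the left action of $\Lambda X_L$ on $X_1$ is free and transitive on the fibres of $(s,t)$, with smooth inverse $f\mapsto f\cdot \sigma_\gamma((s,t)(f))^{-1}$ by smoothness of composition and inversion in $X$. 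On the other hand, $\Lambda X_L$ is itself locally trivial over $X_0^{[2]}$, being the pullback of the locally trivial bundle $\mathcal{A}\to M$ along $\pi\circ\pr_1\colon X_0^{[2]}\to M$: an open cover $\{W_\beta\}$ of $M$ with $\mathcal{A}|_{W_\beta}\cong W_\beta\times A_\beta$ pulls back to an open cover of $X_0^{[2]}$ over which $\Lambda X_L$ becomes a product with $A_\beta$. Taking a common refinement $\{V_\delta\}$ of this pulled-back cover with $\{U_\gamma\}$ and composing the two diffeomorphisms yields the required local trivialisations $X_1|_{V_\delta}\cong V_\delta\times A_{\beta(\delta)}$.

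No serious obstacle is anticipated; once the ``$X_1$ locally looks like $\Lambda X_L$'' observation is in hand, the argument is pure bookkeeping of covers. I remark that the abelian hypothesis is not strictly needed for this local triviality statement alone---it would enter only if one wanted to upgrade the trivialisations to ones compatible \emph{simultaneously} with both the left and right $\Lambda X$-actions on $X_1$, and hence to a genuine principal $\mathcal{A}$-bundle structure in the bibundle sense.
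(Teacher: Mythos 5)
Your proof is correct and follows essentially the same route as the paper's: both use that $(s,t)$ is a surjective submersion to obtain local sections, combine these with the free and fibrewise-transitive action of $\Lambda X_L$ (a pullback of the locally trivial bundle $\mathcal{A}$) to identify $X_1$ locally with $\Lambda X_L$, and then refine covers to get trivialisations. Your explicit bookkeeping and the remark that the abelian hypothesis is not needed for this statement are both accurate additions, but the underlying argument is identical.
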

\begin{proof}
The locally trivial bundles of groups $\Lambda X_L$ and $\Lambda X_R$---pullbacks of $\mathcal{A}$---act principally on $X_1$ (that is: freely, and transitively on the fibres of $(s,t)$), and $(s,t)$ admits local sections as it is a submersion.
From these local sections and local trivialisations of, say $\Lambda X_L$, we can construct local trivialisations of $X_1 \to X_0\times_M X_0$
\end{proof}

We end with a final technical lemma used in the next section, but of independent interest. 

\begin{lemma}\label{lemma:inertia_bundle_pullback_of_arrow_groupoid}
The bundle of groups $\Lambda X/\!/ X \to X$ (internal to Lie groupoids) is the pullback of $(S,T)\colon X^\mathbbm{2} \to X\times X$ along the diagonal $\Delta\colon X \to X\times X$. 
\end{lemma}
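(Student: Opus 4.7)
The plan is to verify the pullback property of $\Lambda X/\!/X \to X$ levelwise on object and arrow manifolds, using that pullbacks in $\Gpd(\cM)$ along submersive functors can be formed componentwise. The functor $(S,T)\colon X^\mathbbm{2}\to X\times X$ is submersive by Lemma~\ref{lemma:some_standard_submersive_functors}, and $X\to\vC(X_0)$ (hence the object component $(s,t)$) is submersive since $X$ is a gerbe, so both the object-level and the arrow-level pullbacks exist in $\cM$.

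At the object level, $X^\mathbbm{2}_0 = X_1$ and the object component of $(S,T)$ is the source-target map $(s,t)\colon X_1\to X_0\times X_0$. The fibre product $X_0\times_{X_0\times X_0}X_1$ along $\Delta\colon X_0\to X_0\times X_0$ is the set of pairs $(x,f)$ with $s(f)=t(f)=x$, which is precisely $\Lambda X$, the object manifold of $\Lambda X/\!/X$.

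At the arrow level, an element of $X^\mathbbm{2}_1$ is a commutative square
\[
	\raisebox{3.5ex}{\xymatrix@R=1.5em{
		x \ar[r]^g \ar[d]_f & x' \ar[d]^{f'} \\
		y \ar[r]_h & y'
	}}
\]
and $(S,T)_1$ sends it to $(g,h)$. Pulling back along $\Delta\colon X_1\to X_1\times X_1$ selects the squares with $g=h$, and imposing the object-level constraint forces $f,f'$ to be loops, say $\alpha\in\Lambda X_x$ and $\beta\in\Lambda X_{x'}$. The commutativity of the square then reads $g\beta=\alpha g$, i.e.\ $\beta=g^{-1}\alpha g$. These tuples $(\alpha,g)\in\Lambda X\times_{X_0,s}X_1$ are exactly the arrows of the action groupoid $\Lambda X/\!/X$, with target map given by $(\alpha,g)\mapsto g^{-1}\alpha g$ as in equation \eqref{eq:action_of_X_on_LambdaX}.

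The final step is to verify that the source, target, identity, and composition maps inherited from $X^\mathbbm{2}$ coincide, under this identification, with those defined for $\Lambda X/\!/X$: the source of the square sends it to $f=\alpha$, the target to $f'=g^{-1}\alpha g$, identities correspond to squares with $g=\id$, and vertical/horizontal pasting of squares with equal left and right sides matches the composition in the action groupoid. This is routine bookkeeping; the hardest part is essentially just keeping the diagrammatic composition order straight when matching the square-commutativity condition $g\beta=\alpha g$ with the conjugation formula in the action of $X$ on $\Lambda X$. No new geometric input is required beyond the submersivity facts already established.
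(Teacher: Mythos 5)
Your proof is correct and follows essentially the same route as the paper: both verify the pullback levelwise, identify the object-level pullback as $\Lambda X$ by definition, and show that pulling back along the diagonal at the arrow level forces the top and bottom arrows of a square to coincide, so that the verticals become loops related by conjugation, recovering the arrow manifold $\Lambda X \times_{X_0,s} X_1$ of $\Lambda X/\!/X$. The paper phrases the arrow-level step slightly more algebraically, describing $X^\mathbbm{2}_1$ as the fibre product $X_1\times_{X_0^2}X_1^2$ with $S$ and $T$ the two projections onto $X_1^2$, but the computation is the same.
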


\begin{proof}
On the level of objects this says that $\Lambda X$ is the pullback of $X_1$ along $X_0 \to X_0 \times X_0$, which is true by definition.
The arrow manifold of $X^\mathbbm{2}$ can be described as the pullback of $(s,t)\colon X_1 \to X_0^2$ along $(s,s)\colon X_1^2 \to X_0^2$.
In this description, the (arrow component of the) source functor $S$ projects on the first factor of $X_1^2$, and the (arrow component of the) target functor $T$ projects on the other factor.
Thus the pullback of $X_1 \times_{X_0^2} X_1^2$ along the diagonal $X_1 \to X_1\times X_1$ forces the last two components to be equal, and hence that the middle factor must be $\Lambda X$, and the pullback is $X_1\times_{s,X_0} \Lambda X$ which is the arrow manifold of $\Lambda X /\!/X$.
\end{proof}

\section{The loop stack of a gerbe}
\label{sec:looped_gerbes}

This section shows that given a differentiable gerbe $\cX$ on a manifold $M$ presented by a Lie groupoid $X$ satisfying (a) a connectedness property for its automorphism groups $X(x,x)$ and (b) a weak form of local triviality of $X_1 \to X_0 \times_M X_0$; then the loop stack is again a (Fr\'echet) differentiable gerbe.
An example of such a groupoid is a bundle gerbe (see below), in which case $(s,t)$ is the projection map for a principal bundle.

In the following, denote $\Map(S^1, X)$ by $\LL X$. 
We will also denote $(\LL X)_i$, i.e.~the object and arrow manifolds, simply by $\LL X_i$.

\begin{proposition}
	
	Let $X$ be a Lie groupoid with a submersive functor $X\to \disc(M)$ such that the resulting map $X_1 \to X_0\times_M X_0$ is a submersion. 
	Then $\LL X\to \disc(LM)$ is submersive and $(s,t)^{\LL X} \colon \LL X_1 \to \LL X_0\times_{LM} \LL X_0$ is a submersion.

\end{proposition}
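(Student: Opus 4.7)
First I would observe that the functor $\LL X \to \disc(LM)$ arises because any smooth functor $\vC(V) \to \disc(M)$ is determined by a smooth map $V \to M$ factoring through $V \twoheadrightarrow S^1$, giving a canonical identification $\inhom{\vC(V)}{\disc(M)} \cong LM$. Since $X \to \disc(M)$ is submersive, Lemma~\ref{lemma:submersive_functor_gives_submersion}(1) implies each component $\inhom{\vC(V)}{X} \to LM$ is a submersion, so $\LL X_0 \to LM$ is a submersion. The arrow component $\LL X_1 \to LM$ of the induced functor factors as $\LL X_1 \xrightarrow{s} \LL X_0 \to LM$, a composition of submersions (the source map of $\LL X$ is a submersion by Section~\ref{sec:Frechet}).

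For the second claim I work component-by-component: fix $V_1, V_2 \in C(S^1)_{\text{min}}$ with chosen refinement $V_{12}$ and show that
\[
A := \inhom{\vC(V_1)}{X} \times_{\inhom{\vC(V_{12})}{X}} \inhom{\vC(V_{12})}{(X^\mathbbm{2})} \times_{\inhom{\vC(V_{12})}{X}} \inhom{\vC(V_2)}{X} \longrightarrow \inhom{\vC(V_1)}{X} \times_{LM} \inhom{\vC(V_2)}{X} =: B
\]
is a submersion. The key step is that $(S,T)\colon X^\mathbbm{2} \to X \times_M X$ is a submersive functor. Its object component is the hypothesized submersion $(s,t)\colon X_1 \to X_0 \times_M X_0$; for the arrow component, parametrising $X^\mathbbm{2}_1$ by triples $(h, f, k)$ with $s(f) = s(h)$ and $t(f) = s(k)$ (the fourth edge of the commuting square being then uniquely determined), the square
\[
\xymatrix{
X^\mathbbm{2}_1 \ar[r] \ar[d] & X_1 \times_M X_1 \ar[d]^{(s,s)} \\
X_1 \ar[r]_{(s,t)} & X_0 \times_M X_0
}
\]
is a pullback, so the top map is a submersion by base change.

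Applying Lemma~\ref{lemma:submersive_functor_gives_submersion}(1) to $(S,T)$ then yields a submersion
\[
\inhom{\vC(V_{12})}{(X^\mathbbm{2})} \longrightarrow \inhom{\vC(V_{12})}{(X \times_M X)} \cong \inhom{\vC(V_{12})}{X} \times_{LM} \inhom{\vC(V_{12})}{X},
\]
the isomorphism using that $\inhom{\vC(V_{12})}{-}$ preserves pullbacks. Viewing $A$ and $B$ as limits of five-term zig-zag diagrams that agree outside the central term---where the vertical map is the submersion above and every other vertical map is an identity---Lemma~\ref{lemma:map_of_wide_pullbacks} delivers the submersion $A \to B$; local linearity is inherited from the mapping-space structure. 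The main obstacle is the pullback identification for the arrow component of $(S,T)$, which is where the hypothesis does the real work; the remaining steps are a direct application of the machinery from Section~\ref{sec:Frechet}.
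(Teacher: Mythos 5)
Your proof is correct and follows essentially the same route as the paper's: reduce to a single component, rewrite the codomain via $\inhom{\vC(V_{12})}{(X\times_M X)}\cong \inhom{\vC(V_{12})}{X}\times_{LM}\inhom{\vC(V_{12})}{X}$, recognise the map as induced by $(S,T)\colon X^{\mathbbm{2}}\to X\times_M X$, and combine Lemma~\ref{lemma:submersive_functor_gives_submersion} with Lemma~\ref{lemma:map_of_wide_pullbacks}. The only difference is that you prove submersivity of $(S,T)$ directly via the pullback description of $X^{\mathbbm{2}}_1$ (which the paper delegates to Lemma~\ref{lemma:some_standard_submersive_functors}) and you explicitly check the arrow component of $\LL X\to\disc(LM)$; both are harmless elaborations.
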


Note that we do not need to assume that $X$ presents a gerbe on $M$, so that the result will be applicable to more general bundles of groupoids, in particular those whose fibres are not necessarily transitive.

\begin{proof}

	Firstly, as $X\to \disc(M)$ is submersive we have the composite map $\inhom{\cech{V}}{X} \to \inhom{\cech{V}}{\disc(M)} \to LM$ a submersion (Lemma~\ref{lemma:submersive_functor_gives_submersion} parts 1 and 3), and so applying Lemma~\ref{lemma:map_of_wide_pullbacks} we get that $\LL X_0 \to LM$ is a submersion.
	Thus we know $\LL X_0\times_{LM}\LL X_0$ is a Fr\'echet manifold.

	The crux of the proof to show $(s,t)^{\LL X}$ is a submersion is in finding an isomorph\footnote{An \emph{isomorph} of a map $f\colon A \to B$ is a map $g\colon A' \to B'$ such that there are isomorphisms $A\simeq A'$ and $B\simeq B'$ making the resulting square commute. It is obvious that the isomorph of a submersion is a submersion.} of the map $(s,t)^{\LL X}$ in such a way that Lemmata \ref{lemma:map_of_wide_pullbacks} and \ref{lemma:submersive_functor_gives_submersion} can be applied.

	Firstly notice that we can work with $(s,t)^{\LL X}$ over each component of its domain and codomain, which are indexed by pairs $V_1,V_2$ of covers of $S^1$.
	This is because the disjoint union of submersions is again a submersion.
	Hence we are only dealing with the map
	\begin{equation}\label{eq:looped_source_target}
		\inhom{\cech{V_1}}{X} \times_{ \inhom{\cech{V_{12}}}{X}} 
		\inhom{\cech{V_{12}}}{(X^\mathbbm{2})}
		\times_{ \inhom{\cech{V_{12}}}{X}} \inhom{\cech{V_2}}{X} 
		\longrightarrow
		\inhom{\cech{V_1}}{X} \times_{LM} \inhom{\cech{V_2}}{X} 
	\end{equation}
	which is projection on the first and third factors of the domain.
	There are isomorphisms
	\begin{align*}
		\inhom{\cech{V_1}}{X} \times_{LM} \inhom{\cech{V_2}}{X} 
		& \simeq 
		\inhom{\cech{V_1}}{X} \times_{ \inhom{\cech{V_{12}}}{X}} 
		\left(
		\inhom{\cech{V_{12}}}{X} \times_{LM} \inhom{\cech{V_{12}}}{X}
		\right)
		\times_{ \inhom{\cech{V_{12}}}{X}} \inhom{\cech{V_2}}{X} \\
		&\simeq 
		\inhom{\cech{V_1}}{X} \times_{ \inhom{\cech{V_{12}}}{X}} 
		\inhom{\cech{V_{12}}}{(X\times_M X)} 
		\times_{ \inhom{\cech{V_{12}}}{X}} \inhom{\cech{V_2}}{X}
	\end{align*}
	which arise from the isomorphisms
	\[
		\inhom{\cech{V_{12}}}{X} \times_{LM} \inhom{\cech{V_{12}}}{X}
		\simeq
		\inhom{\cech{V_{12}}}{(X\times_M X)} 
	\]
	and $\inhom{\cech{V_{12}}}{\disc(M)} \simeq LM.$

	Now we have the following isomorph of (\ref{eq:looped_source_target}):
	\begin{equation}\label{eq:the_isomorph}
		\xymatrix{
			\inhom{\cech{V_1}}{X} \times_{ \inhom{\cech{V_{12}}}{X}} 
			\inhom{\cech{V_{12}}}{(X^\mathbbm{2})}
			\times_{ \inhom{\cech{V_{12}}}{X}} \inhom{\cech{V_2}}{X} 
			\ar[d]\\
			\inhom{\cech{V_1}}{X} \times_{ \inhom{\cech{V_{12}}}{X}} 
			\inhom{\cech{V_{12}}}{(X\times_M X)} 
			\times_{ \inhom{\cech{V_{12}}}{X}} \inhom{\cech{V_2}}{X}
		}
	\end{equation}
	which is the map induced from the map
	\begin{equation}\label{eq:map_induced_on_internal_hom_by_source_target}
		\inhom{\cech{V_{12}}}{(X^\mathbbm{2})} \longrightarrow
		\inhom{\cech{V_{12}}}{(X\times_M X)} 
	\end{equation}
	by interated pullback. 
	This is, in turn, induced by applying the functor $\inhom{\cech{V_{12}}}{(-)} $ 
	to the internal functor
	\[
		(S,T)\colon X^\mathbbm{2} \to X\times_M X,
	\]
	which is submersive by Lemma~\ref{lemma:some_standard_submersive_functors}.
	We can then apply Lemma~\ref{lemma:submersive_functor_gives_submersion}.1 to see that the map (\ref{eq:map_induced_on_internal_hom_by_source_target}) is a submersion.

	Now notice that we can apply Lemma~\ref{lemma:submersive_functor_gives_submersion}.2 to the maps 
	$\inhom{\cech{V_i}}{X} \to \inhom{\cech{V_{12}}}{X}$ 
	($i=1,2$) to see they are submersions. 
	It also follows from Lemma~\ref{lemma:some_standard_submersive_functors} together with Lemma~\ref{lemma:submersive_functor_gives_submersion} that the two maps 
	$\inhom{\cech{V_{12}}}{(X^\mathbbm{2})} \to \inhom{\cech{V_{12}}}{X}$
	induced by $S,T\colon X^\mathbbm{2} \to X$,	and the two maps 
	$
	\inhom{\cech{V_{12}}}{(X\times_M X)} \to \inhom{\cech{V_{12}}}{X}$ 
	induced by the two projections are submersions.
	Now we can apply Lemma~\ref{lemma:map_of_wide_pullbacks}, as the diagram giving the iterated pullback defining the map (\ref{eq:the_isomorph}) to get the desired result, namely that (\ref{eq:the_isomorph}) is a submersion.
\end{proof}

Let us say a gerbe has \emph{connected stabilisers} if $\Lambda X \to X_0$ is a bundle of connected groups.
It then follows that $X_1 \to X_0\times_M X_0$ has connected fibres, and the group $\Lambda X_x$ acts simply transitively on all fibres $(s,t)^{-1}(x,y)$.

Call a submersion $E\to B$ \emph{curvewise trivial} if for every map $\eta\colon [a,b]\to B$, the projection $\eta^*E \to [a,b]$ is isomorphic to a trivial bundle $[a,b]\times F \to [a,b]$.
For a gerbe $X$ that has connected stabilisers, if $(s,t)$ is curvewise trivial then the manifold $F$ is connected.

A gerbe $X$ that has $(s,t)$ curvewise trivial satisfies the property that a lift, as shown in the diagram
\[
	\xymatrix{
		[a,c] \ar[r] \ar[d] & X_1 \ar[d]^{(s,t)}\\
		[a,b] \ar[r] \ar@{-->}[ur] \ar[r] & X_0\times_M X_0 
	}
\]
always exists, for $c\in [a,b)$.
If the gerbe additionally has connected stabilisers, then there is always a lift as in this diagram:
\[
	\xymatrix{
		[a,c]\coprod [d,b] \ar[r] \ar[d] & X_1 \ar[d]^{(s,t)}\\
		[a,b] \ar[r] \ar@{-->}[ur] \ar[r] & X_0\times_M X_0 
	}
\]
for $a < c < d < b$.
Both of these follow from the ability to extend functions $[a,c] \to F$ (respectively $[a,c]\coprod [d,b] \to F$) to $[a,b]$, using Corollary~\ref{Corollary_to_Seely} (and the fact $F$ is connected in the latter case).

\begin{lemma}\label{lemma:(s,t)_surjective_subm}

	Let $M$ be a finite-dimensional manifold and $X$ a Lie groupoid that is a gerbe on $M$ with $(s,t)$ curvewise trivial, then $\LL X_0 \to LM$ is a surjective submersion. 
	If additionally $X$ has connected stabilisers then $(s,t)^{\LL X}\colon \LL X_1 \to \LL X_0\times_{LM} \LL X_0$ is a surjective submersion.

\end{lemma}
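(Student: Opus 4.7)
Since the preceding proposition already shows that both maps are submersions of Fr\'echet manifolds, it remains to establish surjectivity on points. The plan is to build anafunctors and transformations explicitly, using the two lifting diagrams stated just above the lemma, which are consequences of curvewise triviality.

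For surjectivity of $\LL X_0 \to LM$, fix a smooth loop $\gamma\colon S^1 \to M$. Since $\pi\colon X_0 \to M$ is a surjective submersion it admits local smooth sections, and a Lebesgue-number argument applied to a finite cover of $\gamma(S^1)$ by such section neighbourhoods yields a minimal cover $V = \{I_k\}_{k=1}^n \in C(S^1)_{\mathrm{min}}$ with $\gamma|_{I_k}$ lifting to a smooth path $\tilde\gamma_k\colon I_k \to X_0$. These assemble into an object component $f_0\colon V \to X_0$. On each overlap $J_k = I_k\cap I_{k+1}$, the map $(\tilde\gamma_k, \tilde\gamma_{k+1})\colon J_k \to X_0\times_M X_0$ pulls $(s,t)$ back to a trivial bundle by curvewise triviality, and any smooth section defines the arrow component of the putative anafunctor on that overlap (with inverses supplying the reverse arrows). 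Since a minimal cover has empty triple intersections, functoriality on composable non-identity arrows of $\vC(V)$ is vacuous, so these data define an anafunctor $S^1 \leftarrow \vC(V) \to X$ lying over $\gamma$, as required.

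For surjectivity of $(s,t)^{\LL X}$, take $f, g \in \LL X_0$ having the same image in $LM$, with underlying covers $V_1, V_2$, and choose a common minimal refinement $V_{12} = \{I_k^{12}\}_{k=1}^n$. Restriction produces functors $f', g'\colon \vC(V_{12}) \to X$ with $\pi\circ f'_0 = \pi\circ g'_0$, so that $(f'_0, g'_0)\colon V_{12} \to X_0\times_M X_0$ is well defined. A transformation of anafunctors $f\Rightarrow g$ is precisely a natural transformation $\alpha\colon f'\Rightarrow g'$, given by smooth sections $\alpha_k\colon I_k^{12}\to X_1$ of $(s,t)$ lifting $(f'_0, g'_0)|_{I_k^{12}}$ such that on each overlap $J_k = I_k^{12}\cap I_{k+1}^{12}$, naturality determines $\alpha_{k+1}|_{J_k}$ as the composition of $\alpha_k|_{J_k}$ with the appropriate arrow components of $f'$ and $g'$. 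Starting from any $\alpha_1$ obtained via curvewise triviality, we inductively construct $\alpha_k$ for $k = 2, \ldots, n-1$: the value on the sub-interval $J_{k-1}\subset I_k^{12}$ (which meets one endpoint of $I_k^{12}$) is forced, so we extend to all of $I_k^{12}$ by the first lifting diagram.

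The main obstacle is the closing-up on $I_n^{12}$: naturality now constrains $\alpha_n$ on both $J_{n-1}$ (from $\alpha_{n-1}$) and $J_n = I_n^{12}\cap I_1^{12}$ (from $\alpha_1$), and these sub-intervals meet opposite endpoints of $I_n^{12}$. This is precisely the setting of the second lifting diagram, which requires the fibre $F$ of the curvewise-trivial bundle $(s,t)$ to be connected. Here the hypothesis of connected stabilisers enters: $\Lambda X_x$ is connected and acts simply transitively on each fibre of $(s,t)$, hence $F$ is connected, as noted immediately before the lemma. Thus $\alpha_n$ exists, and the assembled $\alpha$ is by construction natural on every overlap, giving a transformation of anafunctors $f\Rightarrow g$ and an element of $\LL X_1$ above $(f,g)$.
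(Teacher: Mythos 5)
Your proposal is correct and follows essentially the same route as the paper: reduce to surjectivity (the submersion property coming from the preceding proposition), lift a loop locally over a minimal cover and fill in arrow components via curvewise triviality with no cocycle conditions to check, and for the arrow space construct the natural transformation interval-by-interval over a common refinement, closing up on the last interval using the two lifting diagrams and the connectedness of the fibre guaranteed by connected stabilisers. The only cosmetic difference is that the paper first produces a single arrow over one basepoint via surjectivity of $(s,t)$ before invoking curvewise triviality, whereas you fold that into the choice of $\alpha_1$; the content is identical.
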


\begin{proof} 

	For the first statement, note that it is immediate that $\LL \vC(X_0) \to LM$ is surjective, because since $X_0\to M$ is a surjective submersion, it has local sections which can be used to lift locally any loop $\gamma\colon S^1 \to M$.
	Then to show that $\LL X \to \LL \vC(X_0)$ is surjective, we need to use the first assumption on $(s,t)$.

	Note that we only need to show we can lift paths $[a,b] \to X_0\times_M X_0$ through $(s,t)\colon X_1 \to X_0\times_m X_0$, where $[a,b] \subset \vC(V)_1$; there are no compatibility conditions.
	But note that since $X_1$ trivialises after pulling back to $[a,b]$, one can just use a section to lift paths as needed. 
	Thus $\LL X \to \LL \vC(X_0)$ is surjective, and so the first claim follows.

	For the second claim we only need to prove that $(s,t)^{\LL X}$ is surjective (it is already a submersion), so consider a single component $\inhom{\vC(V_1)}{X} \times_{LM} \inhom{\vC(V_2)}{X} \subset \LL X_0\times_{LM} \LL X_0$. 
	It suffices to prove that $\inhom{\vC(V_{12})}{(X^\mathbbm{2})} \to \inhom{\vC(V_{12})}{X} \times_{LM} \inhom{\vC(V_{12})}{X} \simeq \inhom{\vC(V_{12})}{(X\times_M X)}$ is surjective, since $(s,t)^{\LL X}$ is a disjoint union of pullbacks of such maps.
	Write $V = V_{12}$, and consider $\gamma = (\gamma_1,\gamma_2) \colon \vC(V) \to X\times_M X$.
	We need to find a lift $\widehat{\gamma}$ as in the diagram:
	\[
		\xymatrix{
		& X^\mathbbm{2} \ar[d] \\
		\vC(V) \ar[r]_-\gamma \ar@{-->}[ur]^{\widehat{\gamma}} & X\times_M X
		}
	\]
	We will iterate through the connected components of $V$ to define $\widehat{\gamma}$ on both objects and arrows.
	The functor $\gamma$ has an underlying object component a map $\coprod_{i=0}^n J_i \to X_0^{[2]}$, and starting with  $J_0 = [a_0,b_0]$ we can find an arrow $b\colon \gamma_1(a_0) \to \gamma_2(a_0) \in X_1$.
	This uses the fact $X_1 \to X_0^{[2]}$ is surjective.
	Since $(s,t)$ is curvewise trivial, we can find a section over $J_0$, and hence a map $J_0 \to X_1 = (X^\mathbbm{2})_0$.

	If we denote $J_{i-1}\cap J_i$ by $J^{i-1}_i$ (working mod $n+1$), then by naturality the object component $\widehat{\gamma}_0$ of the lift $\gamma$ on $J^{i-1}_i \subset J_i$ is determined by its value on $J^{i-1}_i \subset J_{i-1}$.
	By this we mean that for $\widehat{\gamma}$ to be a functor to $X^\mathbbm{2}$, or in other words a natural transformation $\gamma_1 \Rightarrow \gamma_2$, it must for every point in $J^{i-1}_i\subset \vC(V)_1$ satistfy naturality. 
	Thus from the lift on $J_0$ we can define the lift on $J^0_1 \subset J_1$, and then again use the fact $(s,t)$ is curvewise trivial to continue the lift on the rest of $J_1$.

	So starting from $J_0$ we can work through the indexing set for the cover until we have defined $\widehat{\gamma}_0$ on $J_{n-1}$, and hence on $J^{n-1}_n \subset J_n$. 
	The first lift, on $J_0$ defines $\widehat{\gamma}_0$ on $J^n_0 \subset J_n$, and so we need to be able to define a map $J_n \to X_1$ extending both of these partial maps.
	This is the situation as in Figure~\ref{fig:circle_diag}, where we need to define the dotted portion of upper central arc.

\begin{figure}\label{fig:circle_diag}
\centering
\begin{tikzpicture}[scale=1.3]
\pgfmathsetmacro{\Router}{10}
\pgfmathsetmacro{\Rinner}{7.5}
\pgfmathsetmacro{\smallgap}{0.15}
\pgfmathsetmacro{\arrowlength}{\Router-\Rinner - 2*\smallgap}

\centerarc[gray,ultra thick,dash pattern=on 3pt off 3pt](0,0)(39:43:\Router);
\centerarc[black,ultra thick](0,0)(43:52.5:\Router);
\draw[->] (47.5:\Rinner+\smallgap) -- +(47.5:\arrowlength);
\draw[->] (50:\Rinner+\smallgap) -- +(50:\arrowlength);
\draw[->] (52.5:\Rinner+\smallgap) -- +(52.5:\arrowlength);

\centerarc[black,ultra thick](0,0)(47.5:65:\Rinner) node[midway, below left=-1mm] {$\widehat{\gamma}_0(J_0)  $}; 
\draw[<-] (57.5:\Rinner+\smallgap) -- +(57.5:\arrowlength);
\draw[<-] (60:\Rinner+\smallgap) -- +(60:\arrowlength);
\draw[<-] (65:\Rinner+\smallgap) -- +(65:\arrowlength);
\draw[<-] (62.5:\Rinner+\smallgap) -- 
node[fill=white,rounded corners=1ex,inner sep=0.5mm]{$\widehat{\gamma}_1(J_0^n)$}
+(62.5:\arrowlength);

\centerarc[black,ultra thick](0,0)(57.5:65:\Router)node[above right=-0.5mm]{$\widehat{\gamma}_0(J_0^n)$};
\centerarc[black,ultra thick,dotted](0,0)(65:71:\Router);
\centerarc[black,ultra thick](0,0)(71:81:\Router)node[above right]{$\widehat{\gamma}_0(J_n^{n-1})$};

\draw[->] (71:\Rinner+\smallgap) -- +(71:\arrowlength);
\draw[->] (73.5:\Rinner+\smallgap) -- +(73.5:\arrowlength);
\draw[->] (78.5:\Rinner+\smallgap) -- +(78.5:\arrowlength);
\draw[->] (81:\Rinner+\smallgap) -- +(81:\arrowlength);
\draw[->] (76:\Rinner+\smallgap) -- 
node[fill=white,rounded corners=2ex,inner sep=0.5mm]{$\widehat{\gamma}_1(J_n^{n-1})$}
+(76:\arrowlength);

\centerarc[black,ultra thick](0,0)(71:90:\Rinner) node[below=1.5mm, midway] {$\widehat{\gamma}_0(J_{n-1})$};
\draw[<-] (85:\Rinner+\smallgap) -- +(85:\arrowlength);
\draw[<-] (87.5:\Rinner+\smallgap) -- +(87.5:\arrowlength);
\draw[<-] (90:\Rinner+\smallgap) -- +(90:\arrowlength);

\centerarc[black,ultra thick](0,0)(85:93:\Router);
\centerarc[gray,ultra thick,dashed](0,0)(93:97:\Router);
\end{tikzpicture}
\caption{Defining a lift to $X^\mathbbm{2}$}
\end{figure}
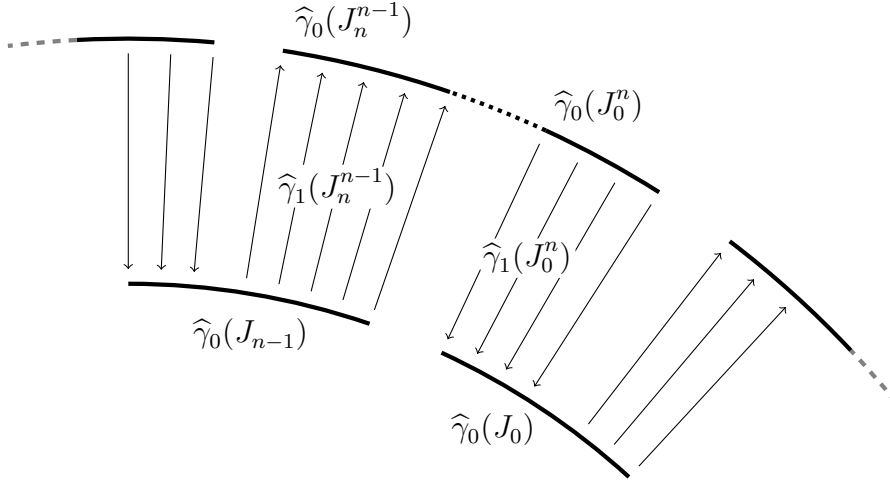

	It is here we use the hypothesis that $X$ has connected stabilisers, since if we pull back $X_1 \to X_0^{[2]}$ along $\gamma\big|_{J_n}$, we can trivialise to $J_n \times A$.
	Then $A$ is necessarily a connected Lie group, so we can extend the map $J_n \supset J^{n-1}_n \coprod J^n_0 \to A$ to all of $J_n$, completing the lift $\widehat{\gamma}\colon \vC(V) \to X^\mathbbm{2}$, and the proof.
\end{proof}

Thus we get the first main result of this section.

\begin{theorem}\label{theorem:loop_stack_is_a_gerbe}

	For a differentiable gerbe $\cX$ presented by a Lie groupoid $X$ with connected stabilisers such that $(s,t)$ is curvewise trivial, then $\cHom(S^1,\cX)$ is a Fr\'echet differentiable gerbe.
	
\end{theorem}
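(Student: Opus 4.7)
The plan is to assemble the theorem from the pieces already developed in Sections~5 and~7. By Theorem~\ref{thm:mapping-groupoid-is-Frechet-Lie}, $\LL X := \Map(S^1,X)$ is a Fr\'echet--Lie groupoid, and by Theorem~\ref{thm:hom-stack-weakly-differentiable} it weakly presents $\cHom(S^1,\cX)$. So to conclude that $\cHom(S^1,\cX)$ is a Fr\'echet differentiable gerbe, it suffices to show that the map of Fr\'echet--Lie groupoids $\LL X \to \disc(LM)$ is a gerbe in the sense of Section~\ref{sec:gerbes}, i.e.\ that the object map $\LL X_0 \to LM$ and the map $(s,t)^{\LL X}\colon \LL X_1 \to \LL X_0 \times_{LM} \LL X_0$ are both surjective submersions.

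First I would observe that the functor $X \to \disc(M)$ is submersive, since by hypothesis $X_0 \to M$ is a surjective submersion and the arrow component $X_1 \to M$ factors through $X_0$ via either $s$ or $t$. Applying the endofunctor $\Map(S^1,-)$ gives a functor of Fr\'echet--Lie groupoids $\LL X \to \LL \disc(M) = \disc(LM)$; this identifies the base of the putative gerbe and is how the structure map $\LL X \to \disc(LM)$ arises.

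Next, the submersion statements are exactly the content of the unnumbered Proposition at the start of Section~\ref{sec:looped_gerbes}: both $\LL X_0 \to LM$ and $(s,t)^{\LL X}\colon \LL X_1 \to \LL X_0 \times_{LM} \LL X_0$ are submersions of Fr\'echet manifolds. This uses the hypothesis that $X_1 \to X_0\times_M X_0$ is a submersion, which is part of the definition of a gerbe, together with Lemma~\ref{lemma:map_of_wide_pullbacks} and Lemma~\ref{lemma:submersive_functor_gives_submersion} applied to the submersive functor $(S,T)\colon X^{\mathbbm 2} \to X\times_M X$ from Lemma~\ref{lemma:some_standard_submersive_functors}.

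Finally, the surjectivity of these two maps is exactly Lemma~\ref{lemma:(s,t)_surjective_subm}, whose hypotheses---that $(s,t)$ is curvewise trivial and $X$ has connected stabilisers---are those of the theorem. Combining these with the submersion statements gives that $\LL X \to \disc(LM)$ is a gerbe, and hence the weakly presented stack $\cHom(S^1,\cX)$ is a Fr\'echet differentiable gerbe. The essential work has therefore already been done; the only step that required genuine analytic input was Lemma~\ref{lemma:(s,t)_surjective_subm}, where the path-lifting argument through the connected-component indexing of the cover $V$ and Corollary~\ref{Corollary_to_Seely} (extending smooth maps from $[a,c]\sqcup[d,b]$ into a connected Lie group to all of $[a,b]$) is the delicate part. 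This is the main obstacle, but it is handled before the theorem statement, so the theorem itself is a bookkeeping synthesis.
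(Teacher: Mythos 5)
Your proposal is correct and follows exactly the paper's route: the theorem is stated there as an immediate consequence of the unnumbered Proposition establishing that $\LL X_0 \to LM$ and $(s,t)^{\LL X}$ are submersions, together with Lemma~\ref{lemma:(s,t)_surjective_subm} for surjectivity, with the Fr\'echet--Lie structure and weak presentation supplied by Theorems~\ref{thm:mapping-groupoid-is-Frechet-Lie} and~\ref{thm:hom-stack-weakly-differentiable}. You have correctly identified that the only genuine analytic content lives in the lemma on surjectivity, and the theorem itself is a synthesis of previously established results.
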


We have the following additional result if we know some more about the gerbe $X$.

\begin{proposition}\label{proposition:Auts_in_looped_bundle_gerbe}

	Let $X \to M$ be an abelian $\mathcal{A}$-gerbe where $\mathcal{A}$ is a locally trivial bundle of connected (abelian Lie) groups. 
	Then $\Lambda\LL X \simeq \LL X_0 \times_{LM} L\mathcal{A}$. 

\end{proposition}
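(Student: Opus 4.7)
The plan is to combine Lemmas~\ref{lemma:inertia_bundle_pullback_of_arrow_groupoid} and~\ref{lemma:inertia_with_adj_action_pullback_from_base} to rewrite $\Lambda\LL X$ as a pullback involving $\inhom{\vC(V_{VV})}{\disc(\mathcal{A})}$, and then to identify this last space with $L\mathcal{A}$.

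Fix $V\in C(S^1)_\text{min}$ and work on the component of $\Lambda\LL X$ lying over $\inhom{\vC(V)}{X}\subset \LL X_0$. Unwinding the fibre-product description of $\LL X_1$ and pulling back along the diagonal of $\inhom{\vC(V)}{X}$, we obtain
\[
\Lambda\LL X\big|_V \;\simeq\; \inhom{\vC(V)}{X}\times_{\inhom{\vC(V_{VV})}{X}\times \inhom{\vC(V_{VV})}{X}}\inhom{\vC(V_{VV})}{(X^\mathbbm{2})},
\]
where the first map is $f\mapsto (f|_{V_{VV}},f|_{V_{VV}})$ and the second is $\alpha\mapsto(S\alpha,T\alpha)$. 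Rewriting this as an iterated pullback and using Lemma~\ref{lemma:inertia_bundle_pullback_of_arrow_groupoid} to identify $\Delta^*X^\mathbbm{2}$ with $\Lambda X/\!/X$ yields
\[
\Lambda\LL X\big|_V \;\simeq\; \inhom{\vC(V)}{X}\times_{\inhom{\vC(V_{VV})}{X}}\inhom{\vC(V_{VV})}{(\Lambda X/\!/X)}.
\]

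Next, Lemma~\ref{lemma:inertia_with_adj_action_pullback_from_base} identifies $\Lambda X/\!/X$ with the pullback $X\times_{\disc(M)}\disc(\mathcal{A})$, so
\[
\inhom{\vC(V_{VV})}{(\Lambda X/\!/X)}\;\simeq\;\inhom{\vC(V_{VV})}{X}\times_{\inhom{\vC(V_{VV})}{\disc(M)}}\inhom{\vC(V_{VV})}{\disc(\mathcal{A})}.
\]
A functor from a \v{C}ech groupoid on a cover of $S^1$ to a discrete groupoid $\disc(N)$ is determined by its object part, which must be constant along the fibres of $V_{VV}\to S^1$ and so descends to a smooth loop in $N$. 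This gives canonical identifications $\inhom{\vC(V_{VV})}{\disc(M)}\simeq LM$ and $\inhom{\vC(V_{VV})}{\disc(\mathcal{A})}\simeq L\mathcal{A}$. Substituting and simplifying the iterated pullback gives
\[
\Lambda\LL X\big|_V \;\simeq\;\inhom{\vC(V)}{X}\times_{LM}L\mathcal{A},
\]
and disjoint unions over $V\in C(S^1)_\text{min}$ yield $\Lambda\LL X\simeq \LL X_0\times_{LM}L\mathcal{A}$.

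The main technical point is that all identifications are smooth. Smoothness of restrictions and diagonals follows from Proposition~\ref{restriction_is_subm} and Lemma~\ref{lemma:submersive_functor_gives_submersion}, while local triviality of $\mathcal{A}$ ensures that $\inhom{\vC(V_{VV})}{\disc(\mathcal{A})}\to L\mathcal{A}$ is a diffeomorphism of Fr\'echet manifolds, via the description of $L\mathcal{A}$ as the smooth equaliser of $\inhom{V_{VV}}{\mathcal{A}}\rightrightarrows \inhom{V_{VV}\times_{S^1}V_{VV}}{\mathcal{A}}$.
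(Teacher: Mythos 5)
Your argument is correct and is essentially the paper's own proof: both identify $\Lambda\LL X$ component-wise as the diagonal pullback of $(s,t)^{\LL X}$, use the fact that $\inhom{\vC(V)}{(-)}$ preserves pullbacks of submersive functors together with Lemma~\ref{lemma:inertia_bundle_pullback_of_arrow_groupoid} to recognise $\inhom{\vC(V)}{(\Lambda X/\!/X)}$, and then apply Lemma~\ref{lemma:inertia_with_adj_action_pullback_from_base} and the identification $\inhom{\vC(V)}{\disc(N)}\simeq LN$ to conclude. The only difference is bookkeeping: you carry the refinement $V_{VV}$ explicitly (and spell out the loop-space identification), where the paper implicitly takes the refinement of $(V,V)$ to be $V$ itself.
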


\begin{proof} 
The assumptions on $X$ mean that $\LL X$ is a gerbe.
The definition of $\Lambda \LL X$ is that it is the pullback
\[
	\xymatrix{
	\Lambda \LL X \ar[r] \ar[d] & \LL X_1 \ar[d]^-{(s,t)^{\LL X}} \\
	\LL X_0 \ar[r]_-\Delta & (\LL X_0)^{[2]}
	}
\]
and on the component $\inhom{\vC(V)}{X} \subset \LL X_0$ this is precisely the pullback
\[
	\xymatrix{
		(\Lambda \LL X)_V \ar[r] \ar[d] & \inhom{\vC(V)}{(X^\mathbbm{2})} \ar[d] & \\
		\inhom{\vC(V)}{X} \ar[r] & \inhom{\vC(V)}{X} \times_{LM} \inhom{\vC(V)}{X} \ar@{}[r]|-\simeq & \inhom{\vC(V)}{(X\times_M X)}\,.
	}
\]
But since $\inhom{\vC(V)}{(-)}$ preserves strict pullbacks of submersive functors, we have $(\Lambda \LL X)_V \simeq \inhom{\vC(V)}{(X\times_{X\times_M X}X^\mathbbm{2})}$.
By Lemma~\ref{lemma:inertia_bundle_pullback_of_arrow_groupoid}, $X\times_{X\times_M X}X^\mathbbm{2} \simeq \Lambda X/\!/ X$, and since $X$ presents an abelian $\mathcal{A}$-gerbe, $\Lambda X/\!/ X \simeq \mathcal{A}\times_M X$, by Lemma~\ref{lemma:inertia_with_adj_action_pullback_from_base}. 
Thus the summand $(\Lambda \LL X)_V$ of $\Lambda\LL X$ over $\inhom{\vC(V)}{X}$ is isomorphic to $\inhom{\vC(V)}{(\mathcal{A}\times_M X)}\simeq L\mathcal{A}\times_{LM} \inhom{\vC(V)}{X}$ (where we have implicitly identified $\inhom{\vC(V)}{M}$ with $LM$ and similarly for $L\mathcal{A}$).
\end{proof}

This gives us the final main result, and in fact the original motivation for this paper.

\begin{theorem}\label{theorem:loop_gerbe}

	Let $M$ be a finite-dimensional smooth manifold, $\mathcal{A}$ be a locally trivial bundle of connected abelian Lie groups on $M$ and $X \to M$ a finite-dimensional abelian $\mathcal{A}$-gerbe. 
	Then $\LL X$ is an abelian $L\mathcal{A}$-gerbe on $LM$.

\end{theorem}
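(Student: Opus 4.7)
The plan is to assemble three structural pieces in turn. First, I would check the hypotheses of Theorem~\ref{theorem:loop_stack_is_a_gerbe}. Since $X$ is an $\mathcal{A}$-gerbe, Lemma~\ref{lemma:A-gerbe} identifies $\Lambda X$ with $\pi^*\mathcal{A}$; as $\mathcal{A}$ has connected fibres by hypothesis, so does $\Lambda X$, giving connected stabilisers. For curvewise triviality of $(s,t)$, Lemma~\ref{lemma:locally_trivial_A_gerbes} says $(s,t)\colon X_1 \to X_0\times_M X_0$ is locally trivial, and any locally trivial fibre bundle pulled back to a compact interval $[a,b]$ is trivial. Theorem~\ref{theorem:loop_stack_is_a_gerbe} then yields that $\LL X \to LM$ is a Fr\'echet differentiable gerbe.

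Next, I would apply Proposition~\ref{proposition:Auts_in_looped_bundle_gerbe} to obtain $\Lambda \LL X \simeq \LL X_0 \times_{LM} L\mathcal{A}$. Inspecting the isomorphism constructed there reveals that it is obtained by applying $\inhom{\check C(V)}{(-)}$ to $\Lambda X \simeq \pi^*\mathcal{A}$ on each component and taking disjoint unions. The same functoriality, applied to the descent isomorphism $\phi$ for $\Lambda X$, furnishes the descent isomorphism for $\Lambda \LL X$, and this identifies $\Lambda \LL X$ as a bundle of groups with descent data with the pullback of $L\mathcal{A}\to LM$. Hence $\LL X$ is an $L\mathcal{A}$-gerbe.

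Finally, for abelianness, I would verify condition 3 of Lemma~\ref{lemma:LR_actions_agree} for $\LL X$. Abelianness of $X$ provides the factoring action
\[
	\Lambda X/\!/ X \times_{X,\pr_1} (X\times_M X) \to \Lambda X/\!/ X
\]
through which the action of $X^\mathbbm{2}$ passes via $(S,T)$. Since $\inhom{\check C(V)}{(-)}$ preserves products and pullbacks of submersive functors, and since $(\LL X)^\mathbbm{2}$, $\LL X \times_{LM} \LL X$, and $\Lambda \LL X /\!/ \LL X$ are all built component-wise as such iterated pullbacks (as in the proofs of Theorem~\ref{theorem:loop_stack_is_a_gerbe} and Proposition~\ref{proposition:Auts_in_looped_bundle_gerbe}), applying the internal hom to the factorisation above and then assembling across minimal covers produces the required factoring action
\[
	\Lambda \LL X /\!/ \LL X \times_{\LL X,\pr_1} (\LL X \times_{LM} \LL X) \to \Lambda \LL X /\!/ \LL X.
\]

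The main obstacle will be the book-keeping in this last step: the arrow manifold of $\LL X$ is indexed by \emph{pairs} of covers $(V_1,V_2)$, with transformations realised on a common refinement $V_{12}$, so some care is needed to verify that the factorising action is compatible with source and target projections across components with different indices. The pointwise character of abelianness makes this essentially routine, but unwinding the common-refinement conventions from Section~\ref{section:diffeological_groupoid} is where most of the actual work lies.
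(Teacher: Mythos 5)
Your proposal is correct and follows the same skeleton as the paper's proof: invoke Theorem~\ref{theorem:loop_stack_is_a_gerbe} for gerbe-ness, Proposition~\ref{proposition:Auts_in_looped_bundle_gerbe} to identify the structure group bundle as $L\mathcal{A}$, and then one of the equivalent conditions of Lemma~\ref{lemma:LR_actions_agree} for abelianness. The one point of divergence is in the last step: the paper verifies condition~2 of Lemma~\ref{lemma:LR_actions_agree} for $\LL X$, by writing out the action map $\act_\LL$ explicitly as a composite of maps of the form $\inhom{\vC(V_{12})}{(-)}$ applied to the structure maps of $X$, and then using the groupoid-level reformulation of abelianness of $X$ (condition~3, rewritten as the statement that the action of $X^\mathbbm{2}$ on $\Lambda X/\!/X$ is, up to the identification with $\mathcal{A}\times_M X$, the target functor $T$) to collapse that composite to a projection. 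You instead propose to verify condition~3 for $\LL X$ directly, by pushing the factoring action of $X\times_M X$ on $\Lambda X/\!/X$ through $\inhom{\vC(V)}{(-)}$ and assembling over pairs of covers. Both arguments rest on exactly the same two ingredients --- preservation of pullbacks of submersive functors by the hom functor, and the groupoid-level formulation of abelianness, which is precisely why the paper sets up condition~3 in the first place --- so this is a reorganisation rather than a new idea, and you correctly flag the common-refinement bookkeeping (the identification of $\act_\LL$, defined via conjugation of transformations of anafunctors over $V_{12}$, with the hom-functor image of the action for $X$) as the residual work; that identification is exactly what the explicit column of maps in the paper's proof supplies. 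One small point in your favour: your verification that the hypotheses of Theorem~\ref{theorem:loop_stack_is_a_gerbe} hold (connected stabilisers from $\Lambda X\simeq\pi^*\mathcal{A}$, curvewise triviality from Lemma~\ref{lemma:locally_trivial_A_gerbes}) is more explicit than the paper, which cites that theorem without comment.
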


\begin{proof} 
Theorem~\ref{theorem:loop_stack_is_a_gerbe} ensures that $\LL X$ is again a gerbe, and from Proposition~\ref{proposition:Auts_in_looped_bundle_gerbe} we know that $\Lambda \LL X$ descends to a bundle of groups $L\mathcal{A} \to LM$.
Hence we know $\LL X$ is an $L\mathcal{A}$-gerbe, and we thus need to show that $\LL X$ is an \emph{abelian} gerbe.
This will be done by showing condition 2 of Lemma~\ref{lemma:LR_actions_agree} holds for $\LL X$, given that condition 3 of Lemma~\ref{lemma:LR_actions_agree} holds for $X$.
That is, we need to show the diagram
\[
	\xymatrix{
		\Lambda \LL X \times_{\LL X_0} \LL X_1\ar[rr] \ar[d]_{\id_{\Lambda \LL X}\times(s,t)^{\LL X}} && \Lambda \LL X \ar@{=}[d]\\
		\Lambda \LL X \times_{\LL X_0} \LL X_0\times_{LM}\LL X_0 \ar[rr] && \Lambda \LL X
	}
\]
commutes.
This reduces (using Lemma~\ref{lemma:inertia_bundle_pullback_of_arrow_groupoid}) to showing the following diagram commutes, for all $V_1,V_2$:
\[
	\centerline{\xymatrix{
		\inhom{\vC(V_1)}{(\Lambda X/\!/X)} \times_{\inhom{\vC(V_{12})}{X}} \inhom{\vC(V_{12})}{(X^\mathbbm{2})} \times_{\inhom{\vC(V_{12})}{X}} \inhom{\vC(V_2)}{X}
		\ar[rr]^-{\act_\LL} \ar[d]_{\pr_{124}} \ar@{}[drr]|{?} && 
		\inhom{\vC(V_2)}{(\Lambda X/\!/X)} \ar@{=}[d]\\
		\inhom{\vC(V_1)}{(\Lambda X/\!/X)} \times_{\inhom{\vC(V_1)}{X}} 
		\left(\inhom{\vC(V_1)}{X} \times_{LM} \inhom{\vC(V_2)}{X}\right)
		\ar[rr] && \inhom{\vC(V_2)}{(\Lambda X/\!/X)}
	}}
\]
Using the isomorphism $\inhom{\vC(V)}{(\Lambda X/\!/X)}  \simeq \inhom{\vC(V)}{(\mathcal{A} \times_M X)} \simeq L\mathcal{A} \times_{LM} \inhom{\vC(V)}{X}$, we can rewrite the desired diagram as
\[\centerline{
	\xymatrix{
		L\mathcal{A} \times_{LM} \inhom{\vC(V_1)}{X} 
		\times_{\inhom{\vC(V_{12})}{X}}
		\inhom{\vC(V_{12})}{(X^\mathbbm{2})}
		\times_{\inhom{\vC(V_{12})}{X}} \inhom{\vC(V_2)}{X}
		\ar[rr]^-{\pr_{14}} \ar[d]_{\pr_{124}} && 
		L\mathcal{A} \times_{LM} \inhom{\vC(V_2)}{X} \ar@{=}[d]\\
		L\mathcal{A} \times_{LM} \inhom{\vC(V_1)}{X} 
		\times_{LM} \inhom{\vC(V_2)}{X}
		\ar[rr]_-{\pr_{13}} && L\mathcal{A} \times_{LM} \inhom{\vC(V_2)}{X}
	}
	}
\]
in other words, we need to prove that the action map $\act_\LL$ above (defined using conjugation of transformations of anafunctors) is, up to isomorphism, the projection $\pr_{14}$ in the top row of the preceeding diagram.

Now note that condition 2 in Lemma~\ref{lemma:LR_actions_agree} for $X$ (which holds since we are assuming $X$ is an abelian $\mathcal{A}$-gerbe) can be rewritten as
\[
	\xymatrix{
		\mathcal{A} \times_M X^\mathbbm{2} \ar[r]^-{\id\times T}  & \mathcal{A} \times_M X \ar[d]^\simeq\\
		\Lambda X /\!/ X \times_X X^\mathbbm{2} \ar[r]^-\act \ar[d]_{\id\times(S,T)} \ar[u]^\simeq & \Lambda X /\!/ X \ar@{=}[d] \\
		\Lambda X /\!/ X \times_X (X\times_M X) \ar[r] \ar[d]_\simeq & \Lambda X /\!/ X  \\
		\mathcal{A} \times_M (X \times_M X) \ar[r]_-{\pr_{13}} & \mathcal{A} \times_M X  \ar[u]_\simeq
	}
\]
In other words, the action of $X^\mathbbm{2}$ on $\Lambda X/\!/ X$ is, up to isomorphism, essentially given by the target functor $T\colon X^\mathbbm{2} \to X$.
We will in particular use the top square of this diagram for the next step of the proof.

The map $\act_\LL$ is defined (using the incorporated simplifications) to be the composite of the left column of arrows in the diagram 
\[
\centerline{
	\xymatrix@C=2ex{
		\inhom{\vC(V_1)}{(\Lambda X/\!/X)} \times_{\inhom{\vC(V_{12})}{X}} \inhom{\vC(V_{12})}{(X^\mathbbm{2})} \times_{\inhom{\vC(V_{12})}{X}} \inhom{\vC(V_2)}{X}
		\ar[d] \ar[r]^-\simeq & 
		L\mathcal{A} \times_{LM} \inhom{\vC(V_1)}{X} \times_{\inhom{\vC(V_{12})}{X}} \inhom{\vC(V_{12})}{(X^\mathbbm{2})} \times_{\inhom{\vC(V_{12})}{X}} \inhom{\vC(V_2)}{X} \ar[d]\\
		\inhom{\vC(V_{12})}{(\Lambda X/\!/X)} \times_{\inhom{\vC(V_{12})}{X}} \inhom{\vC(V_{12})}{(X^\mathbbm{2})} \times_{\inhom{\vC(V_{12})}{X}} \inhom{\vC(V_2)}{X} \ar[d]_-\simeq \ar[r]^-\simeq
		& L\mathcal{A} \times_{LM} \inhom{\vC(V_{12})}{(X^\mathbbm{2})} \times_{\inhom{\vC(V_{12})}{X}} \inhom{\vC(V_2)}{X}  \ar[d]^-\simeq\\
		\inhom{\vC(V_{12})}{\left(\Lambda X/\!/X\times_X X^\mathbbm{2} \right)}
		\times_{\inhom{\vC(V_{12})}{X}} \inhom{\vC(V_2)}{X} \ar[r]^-\simeq \ar[d]_{\inhom{\vC(V_{12})}{\act}\times \id}
		&
		\inhom{\vC(V_{12})}{\left(\mathcal{A} \times_M X^\mathbbm{2}\right)}
		\times_{\inhom{\vC(V_{12})}{X}} \inhom{\vC(V_2)}{X} 
		\ar[d]^{\inhom{\vC(V_{12})}{(\id\times T)}\times \id}\\
		\inhom{\vC(V_{12})}{(\Lambda X/\!/X)}\times_{\inhom{\vC(V_{12})}{X}} \inhom{\vC(V_2)}{X} \ar[r]^-\simeq \ar[d]_-\simeq &
		\inhom{\vC(V_{12})}{\left(\mathcal{A} \times_M X\right)}
		\times_{\inhom{\vC(V_{12})}{X}} \inhom{\vC(V_2)}{X} \ar[d]^-\simeq\\
		\inhom{\vC(V_2)}{(\Lambda X/\!/X)} \ar[r]^-\simeq & L\mathcal{A} \times_{LM} \inhom{\vC(V_2)}{X}
	}
}
\]
and the square second from the bottom commutes because of the assumption that $X$ is an abelian $\mathcal{A}$-gerbe. The composite of the right column of arrows is just $\pr_{14}$, and hence condition 2 of Lemma~\ref{lemma:LR_actions_agree} holds, and so $\LL X$ is an abelian $L\mathcal{A}$-gerbe, as we needed to prove.
\end{proof}

\begin{corollary}

	If $A$ is a connected abelian Lie group and $X$ is an $A$-bundle gerbe on $M$, then $\LL X$ is an $LA$-bundle gerbe.

\end{corollary}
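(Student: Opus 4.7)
The plan is to derive this as a direct specialisation of Theorem~\ref{theorem:loop_gerbe}. Recall from the example in Section~\ref{sec:gerbes} that an $A$-bundle gerbe $X\to M$ is by definition an abelian $(M\times A)$-gerbe. Thus the structure group bundle is $\mathcal{A} = M\times A$, which is (trivially) a locally trivial bundle of abelian Lie groups, and its fibre $A$ is connected by hypothesis. So the statement will follow once we verify the remaining hypotheses of Theorem~\ref{theorem:loop_gerbe} (and of the underlying Theorem~\ref{theorem:loop_stack_is_a_gerbe}), and then identify $L(M\times A)$ with $LM\times LA$ over $LM$.

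First I would check the two conditions needed to invoke Theorem~\ref{theorem:loop_stack_is_a_gerbe}. Connected stabilisers is immediate: since $X$ is an $(M\times A)$-gerbe we have $\Lambda X \simeq X_0\times A$ by Lemma~\ref{lemma:A-gerbe}, a bundle of connected groups. Curvewise triviality of $(s,t)\colon X_1 \to X_0\times_M X_0$ is also immediate: by Lemma~\ref{lemma:locally_trivial_A_gerbes} this map is a locally trivial principal $A$-bundle, and any principal $A$-bundle pulled back along a curve $\eta\colon [a,b]\to X_0\times_M X_0$ is trivial (a locally trivial principal bundle over a contractible paracompact base admits a global section). So Theorem~\ref{theorem:loop_gerbe} applies directly.

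Applying Theorem~\ref{theorem:loop_gerbe}, we conclude that $\LL X$ is an abelian $L\mathcal{A}$-gerbe on $LM$, where $L\mathcal{A} = L(M\times A)$. The final step is to identify this structure group bundle. Because $\LL$ preserves products up to weak equivalence (and in this case on the nose, since the cover of $S^1$ indexing the components matches up with itself on both sides), the canonical map
\[
	L(M\times A) \longrightarrow LM \times LA
\]
induced by the two projections is an isomorphism of Fr\'echet-Lie groups over $LM$. Hence $L\mathcal{A} \simeq LM\times LA$ as a bundle of groups over $LM$, and so $\LL X$ is an abelian $(LM\times LA)$-gerbe over $LM$, which is precisely the definition of an $LA$-bundle gerbe on $LM$.

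The main (minor) obstacle is the curvewise triviality check; everything else is unpacking definitions and invoking the preceding theorem. Since this follows from Lemma~\ref{lemma:locally_trivial_A_gerbes} plus the contractibility of compact intervals, the proof should only take a few lines.
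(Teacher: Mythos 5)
Your proof is correct and follows the same route as the paper's own (much terser) argument: both reduce the corollary to Theorem~\ref{theorem:loop_gerbe} applied to the trivial structure group bundle $\mathcal{A}=M\times A$, using that $(s,t)$ is the projection of a locally trivial principal $A$-bundle, and then identify $L(M\times A)\simeq LM\times LA$. Your explicit verification of curvewise triviality via Lemma~\ref{lemma:locally_trivial_A_gerbes} and contractibility of intervals just spells out what the paper leaves implicit.
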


\begin{proof}

	An $A$-bundle gerbe $X\to M$ is an abelian $A\times M$-gerbe and $(s,t)$ is the projection for a locally trivial bundle, so $\LL X$ is an abelian $L(A\times M) \simeq LA \times LM$ gerbe.
	Thus $\LL X$ is an $LA$-bundle gerbe.
\end{proof}

\begin{remark}

	We would like to apply this result to the basic gerbe on a Lie group, since then we get a gerbe over the free loop group that is \emph{multiplicative}. 
	This is fine if we use one of the finite-dimensional models, but it would be useful if we could also use the infinite-dimensional strict model $\String_G^{\text{BCSS}}$ described in \cite{BCSS_07}.
	The results from Section \ref{sec:diffeological_presentation} show that $\LL \String_G^{\text{BCSS}}$ is at worst a diffeological groupoid.
	Since $\LL$ preserves products up to equivalence this in fact a coherent diffeological 2-group (see eg~\cite{Baez_Lauda_04}).
	We conjecture, based on private discussion with Alexander Schmeding, that the results of this paper should apply to $\String_G^{\text{BCSS}}$, and in fact Fr\'echet--Lie groupoids with (adapted) local additions and possibly also smoothly locally regular\footnote{See \cite[Definition 3.7]{Stacey_13} for the formal definition: a smoothly locally regular map is one that is equipped with the analogue of a local addition, smoothly parameterising submersion charts in the domain.} source and target more generally.

\end{remark}
\providecommand{\bysame}{\leavevmode\hbox to3em{\hrulefill}\thinspace}
\providecommand{\MR}{\relax\ifhmode\unskip\space\fi MR }
\providecommand{\MRhref}[2]{%
  \href{http://www.ams.org/mathscinet-getitem?mr=#1}{#2}
}
\providecommand{\href}[2]{#2}


\vspace{5mm}
\noindent David Michael Roberts, \href{mailto:david.roberts@adelaide.edu.au}{\texttt{david.roberts@adelaide.edu.au}} \\

\noindent Raymond F.\ Vozzo, \href{mailto:raymond.vozzo@adelaide.edu.au}{\texttt{raymond.vozzo@adelaide.edu.au}} \\

\noindent School of Mathematical Sciences, University of Adelaide \\
Adelaide 5005, Australia \\


\end{document}